\DeclareMathOperator{\BBS}{BBS}
\DeclareMathOperator{\Imag}{Imag}
\newtheorem{prop}{Proposition}[section]
\newtheorem{theorem}[prop]{Theorem}
\newtheorem{lem}[prop]{Lemma}
\theoremstyle{definition}
\newtheorem{definition}[prop]{Definition}
\newtheorem{rem}[prop]{Remark}
\newtheorem{example}[prop]{Example}
\newtheorem*{rem*}{Remark}
\newtheorem*{notation*}{Notation}
\numberwithin{equation}{section}
\newcommand{\ra}{\rightarrow}
\newcommand{\bC}{\mathbb{C}}
\newcommand{\bP}{\mathbb{P}}
\newcommand{\bH}{\mathbb{H}}
\DeclareMathOperator{\pt}{pt}
\DeclareMathOperator{\Fl}{Fl}
\DeclareMathOperator{\Gr}{Gr}
\newcommand{\Hom}{\mathrm{Hom}}
\newcommand{\mwG}{\widehat{\mathcal{G}}}
\begin{document}

\title[Bioriented flags and resolutions]{Bioriented flags and resolutions of  Schubert varieties}
\author{Daniel Cibotaru}
\thanks{This was partially supported by a BITDEFENDER post-doctoral fellowship}
\address{Departamento de Matem\'atica, Universidade Federal do Cear\'a}
\email{daniel@mat.ufc.br}
\begin{abstract} We use  incidence relations running in two directions in order to construct  a Kempf-Laksov type  resolution for any Schubert variety of the complete \emph{flag manifold} but also an \emph{embedded resolution} for any Schubert variety in the Grassmannian. These constructions are alternatives to the celebrated Bott-Samelson resolutions. The second process led to the introduction of $W$-flag varieties, algebro-geometric objects that interpolate between the standard flag manifolds and products of Grassmannians, but which are singular in general. The surprising simple desingularization of a particular such type of variety produces an embedded resolution of the Schubert variety within the Grassmannian.
\end{abstract}
\subjclass[2010]{Primary 14E15,14M15}
\maketitle
\thispagestyle{empty}
\tableofcontents

\section{Introduction}
The Kempf-Laksov resolutions of the Schubert varieties in the Grassmannian are well-known since the 70's and were used to prove the celebrated determinantal formula \cite{KL}. These objects are very easy to describe via \emph{linear} incidence relations and quite practical for many purposes. 
We call the relations linear because they  involve a total order relation.  One of the purposes we pursue in this article is to produce  a similar" desingularization but for the 
(generalized) Schubert varieties in the flag manifold associated to the elements of the Weyl group of $GL_n(\bC)$, i.e. permutations. In other words, we aim for a resolution  that does not use the Borel orbits and the fiber products of parabolic groups as in the original Bott-Samelson construction \cite{BS,De,Ha}. It turns out that can be done quite simply and naturally, but there is  a price to pay. The resolutions  are subspaces of a matrix-product of  Grassmannian spaces. The elements of these subspaces satisfy incidence relations both in the vertical and the horizontal directions with respect to the matrix display of the product of Grassmannians. We call such objects bioriented flags. There is quite a bit of redundancy in the definition of a bioriented flag, but it is this redundancy that accounts for their simple description.  

In \cite{Ma1}, Magyar gave a different construction of the Bott-Samelson resolutions, realizing them as subspaces of a product of Grassmannians. A particularly explicit description of Magyar's construction for $GL_n(\bC)$ using incidence relations  was given by Jones and Woo in \cite{JW}. One common feature of all Bott-Samelson resolutions of a fixed Schubert variety determined by a permutation $w$ is that they use a total of $l(w)$ Grassmannian spaces. After eliminating redundancy, this coincides with the number of Grassmannians needed in the resolution via bioriented flags. It is then less than a surprise that the bioriented flag resolution is isomorphic to the Bott-Samelson resolution corresponding to the "bubblesort" reduced word decomposition of the permutation that defines the Schubert variety.  We prove this using \cite{JW}.

The other point of interest in this article is to look for an \emph{embedded resolution} of a Schubert variety within the Grassmannian. There  are certain common points with the previous case. Again, the Bott-Samelson resolution can be used to produce such a resolution by taking the Grassmanian to  be the Schubert variety itself inside the flag manifold $G/B$ together with the projection $G/B\ra G/P$. We propose an alternative, direct  construction in the same spirit to what is done in the first part of the article. We use again a bioriented flag, but the incidence relations have something new to them. Rather than just inclusions of the smaller dimensional spaces into the bigger ones, one has to deal now with the inclusions of the smaller dimensional spaces into the sum of the bigger dimensional spaces and a fixed space. This is in general a source of singularities.  We distill a definition for such objects, henceforth called "W-flag varieties". 

Let us  describe in a few words the construction of the embedded resolution. It starts with the well-known observation that an open subset of the  regular part of a Schubert variety can be seen as the stable manifold associated to a critical manifold for a certain gradient flow of a Morse-Bott function on the Grassmannian \cite{HL,Wo}. As such, it has a companion Schubert variety  which plays the role of the closure of the unstable manifold for the same flow. The unstable manifold  fibers over a product of projective spaces which is  a critical manifold of the Morse-Bott function. All this is  valid in the $C^{\infty}$ category.

In order to get the holomorphic resolution, the following process is used. Fix any fiber of the companion Schubert variety with respect  to the projection to the critical manifold. It plays the role of the  variety of "normal directions" in which we seek to deform the original Schubert variety in order to "cover"  the Grassmannian, a posteriori, with families of Kempf-Laksov resolutions.  More precisely to each such "normal direction" we associate in a one-to-one fashion a  partial flag of the same type as  the original Schubert variety, obtaining thus a family of Schubert varieties of the same type. Unfortunately,  this algebraic family of partial flags is not a proper subvariety, owing to the fact that the fiber is not compact. Hence the union of   Schubert varieties corresponding to this family of flags covers only an open dense set of the ambient Grassmannian.   

One then needs to compactify  \emph{the embedding} of the (non-compact) fiber in the mentioned space of partial flags. The emphasis is on the map, rather than on the set. By this we mean that we are looking for a proper variety together with a map to the space of partial flags that extends the embedding of the fiber in the space of partial flags of the previous paragraph.  The "compactification" is  a constrained $W$-flag  variety, definition to be given momentarily. These are, in general, singular algebraic objects and do not seem to have  simple desingularizations. But, quite surprisingly, the very particular type of incidence relations describing \emph{the compactification} under inspection  lead to the construction of a simple resolution in which  double incidence relations are present. In other words, bioriented flags show up again. To finish up, each point in the resolution of the compactification of the fiber determines a partial flag. This flag can be used to construct a Kempf-Laksov resolution for the corresponding Schubert variety as one naturally has a projection to the original Grassmannian. Doing this for all points in the resolution of the compactification we actually get a birational map to the Grassmannian, whose restriction to a certain submanifold gives a resolution of the Schubert variety we started with.

We give now the working definitions in the analytic category.
\begin{definition}\label{defo1} Let $X$ be a complex space. Then a (direct) resolution/desingularization of $X$ is a non-singular complex space $\tilde{X}$ together with a proper, analytic, birational map $\pi:\tilde{X}\ra X$.

Let $X$ be a proper analytic subvariety  of a regular complex space $Y$. Then an embedded resolution of $X$ is a pair $(\tilde{Y},\tilde{X})$ of regular spaces with $\tilde{X}$ a subspace of $\tilde{Y}$ together with a proper, analytic map $\pi:\tilde{Y}\ra Y$ such that
\begin{itemize}
\item[(a)] $\pi$ is birational;
\item[(b)] $\pi\bigr|_{\tilde{X}}:\tilde{X}\ra X$ is birational
\item[(c)] there exists a proper analytic subvariety $X'\subset X$ such that $\pi^{-1}(X\setminus X')\subset \tilde{X}$.
\end{itemize}
\end{definition}
\begin{rem} First, notice that we do not demand more specific knowledge of the exceptional divisors, as is customary for what is called strong desingularization \cite{Wl}, where one requires that the exceptional divisor is a normal crossings divisor.

Second, it is unreasonable to expect that $\pi^{-1}(X)\subset \tilde{X}$ as simple examples of resolutions of singularities show. Item (c) is included as the reasonable substitute condition.
\end{rem}

Let $E$ be a complex vector space of dimension $n$, let $\Fl_{1,\ldots, n}(E)$ be the complete flag manifold of subspaces of $E$. For every fixed $F_*$ and every $w\in S_n$, the symmetric group on $n$-elements, one has a Schubert cell $S^w(F)$ defined via (see \cite{Fu} page 157)
\[ S^w(F):=\{\ell_*\in\Fl_{1,\ldots, n}(E)~|~\dim{\ell_p\cap F_q}=\#\{i\in\{1,\ldots, n\}~|~i\leq p,~ w(i)\leq q\}
\]
The rank function 
\begin{equation} \label{Beq1} D^w(p,q):= \#\{i~|~i\leq p, ~w(i)\leq q\}=\#w(\{1,2\ldots,p\})\cap \{1,\ldots,q\}\end{equation}
 determines the Bruhat order on $S_n$ (see \cite{Fu}, page 173). To it we associate the product space
 \begin{equation}\mathcal{P}^w:=\prod_{i,j=1}^n\Gr_{D^w_{i,j}}(E)
 \end{equation}
 where if $D^w_{i,j}=0$ or $n$  we get point spaces. Define the following bioriented flag:
\[ \Fl^w(E):=\{\ell\in \mathcal{P}^w~|~\ell_{i,j}\subset \ell_{i,j+1},~~\ell_{i,j}\subset \ell_{i+1,j}\}
\]
The incidence relations make sense as the rank matrix (\ref{Beq1}) is increasing along every line and along every column. The next statement puts together Theorem \ref{Flres1} and Theorem \ref{BBS}.
\begin{theorem} Let $\hat{S}^w(F):=\{\ell\in \Fl{^w}(E)~|~\ell_{n,i}=F_i, ~i=1,\ldots, n\}$. Then $\hat{S}^w(F)$ is a manifold of dimension equal to the number of inversions of $w$ and the projection 
\[\hat{S}^w(F)\ra \Fl_{1,\ldots,n}(E),\qquad \ell_{*,*}\ra \ell_{*,n}\]
is a resolution of $S^w(F)$ which is isomorphic to the Bott-Samelson resolution of $S^w(F)$ corresponding to the bubblesort reduced word presentation of $w$.
\end{theorem}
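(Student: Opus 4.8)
\medskip
\noindent\emph{Sketch of proof.} The two halves of the statement (Theorems \ref{Flres1} and \ref{BBS}) are treated separately. For the resolution statement the plan is to exhibit $\hat S^w(F)$ as an \emph{iterated $\bP^1$-bundle over a point}. Fix the bottom row $\ell_{n,*}=F_*$ and reconstruct the rows $\ell_{n-1,*},\ell_{n-2,*},\dots,\ell_{1,*}$ in that order. Write $G^{(i)}_0=0\subset G^{(i)}_1\subset\dots\subset G^{(i)}_i$ for the distinct subspaces occurring in row $i$, so that $\ell_{i,j}=G^{(i)}_{D^w(i,j)}$, and set $m_0=m_0(i):=\#\{k\le i+1\mid w(k)\le w(i+1)\}$. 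A direct inspection of \eqref{Beq1} shows that once the rows $\ge i+1$ are fixed, the subspaces $G^{(i)}_m$ with $m<m_0$ are forced to equal $G^{(i+1)}_m$, while $G^{(i)}_m$ for $m=m_0,m_0+1,\dots,i$ are chosen in turn, each one being a hyperplane of $G^{(i+1)}_{m+1}$ containing the already-determined $G^{(i)}_{m-1}$, i.e.\ a point of $\bP\bigl(G^{(i+1)}_{m+1}/G^{(i)}_{m-1}\bigr)\cong\bP^1$ (the inclusion $G^{(i)}_{m-1}\subset G^{(i+1)}_{m+1}$ needed to form the quotient is itself checked along the way). Hence $\hat S^w(F)$ is a tower of $\bP^1$-bundles, so it is smooth, irreducible, and projective (it is also closed in $\mathcal{P}^w$), of dimension $\sum_{i=1}^{n-1}\bigl(i+1-m_0(i)\bigr)=\sum_{i=1}^{n-1}\#\{k\le i\mid w(k)>w(i+1)\}$, which is exactly the number of inversions of $w$.

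Next I would analyze the projection $\pi:\ell_{*,*}\mapsto\ell_{*,n}$. The vertical incidence relations give $\ell_{p,q}\subset\ell_{n,q}=F_q$ and the horizontal ones give $\ell_{p,q}\subset\ell_{p,n}$, whence $\dim(\ell_{p,n}\cap F_q)\ge\dim\ell_{p,q}=D^w(p,q)$; since these are precisely the closed rank conditions defining the Schubert variety $\overline{S^w(F)}$ (cf.\ \cite{Fu}), the image of $\pi$ is contained in $\overline{S^w(F)}$. Conversely, over the open cell $S^w(F)$ the assignment $\ell_*\mapsto(\ell_p\cap F_q)_{p,q}$ is a regular section of $\pi$, and it is the only preimage of a point of $S^w(F)$ because any $\ell_{*,*}\in\pi^{-1}(\ell_*)$ satisfies $\ell_{p,q}\subset\ell_p\cap F_q$ with both sides of dimension $D^w(p,q)$; thus $\pi$ restricts to an isomorphism over the dense open subset $S^w(F)$. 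As $\hat S^w(F)$ is complete, $\pi$ is proper and its image is closed, hence equals $\overline{S^w(F)}$, and $\pi$ is birational. Together with the smoothness established above this gives a resolution in the sense of Definition \ref{defo1}.

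For the comparison with Bott--Samelson the plan is to invoke the explicit incidence-relation model of Magyar's configuration variety recorded by Jones--Woo \cite{JW}. After discarding the redundant entries of a bioriented flag, namely the $G^{(i)}_m$ with $m<m_0(i)$ that are imposed by the rows above, the remaining $\ell(w)$ varying Grassmannians and the inclusions among them should be matched bijectively with the subspaces and incidence conditions that \cite{JW} assign to the bubblesort reduced word of $w$, arranging the induction so that the $i$-th batch of non-redundant entries corresponds to the $i$-th pass of bubblesort and so that the common target map to $\Fl_{1,\dots,n}(E)$ --- column $n$ on our side, the final flag on theirs --- is respected. Matching the two towers of $\bP^1$-bundles step by step then produces the isomorphism.

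The routine parts (local triviality of each bundle step, the auxiliary inclusion $G^{(i)}_{m-1}\subset G^{(i+1)}_{m+1}$, the standard rank-condition description of $\overline{S^w(F)}$) should cause no trouble. The hard part will be the last paragraph: pinning down exactly how the combinatorics of the bubblesort word of $w$ lines up with the redundancy pattern $\{m<m_0(i)\}$ of the bioriented flag, and verifying that, after this deletion, the incidence relations of \cite{JW} translate precisely into our vertical and horizontal inclusions. That bookkeeping is where essentially all the work sits.
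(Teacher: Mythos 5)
For the manifold and resolution halves your plan matches the paper's quite closely. The paper's Proposition~\ref{P12} also builds $\hat S^w$ row by row from the bottom: it projects to row $n-1$, shows the free part of that row is a Kempf--Laksov resolution of the Grassmannian of hyperplanes $\Gr_{n-w(n)}(E/F_{w(n)-1})$ (hence a $\bP^1$-tower of dimension $n-w(n)$), then discards column $w(n)$ and recurses. Your version cuts out the Kempf--Laksov intermediary and exhibits each $\bP^1$-step directly as $\bP\bigl(G^{(i+1)}_{m+1}/G^{(i)}_{m-1}\bigr)$; this is equivalent, and your dimension bookkeeping $\sum_i(i+1-m_0(i))=\ell(w)$ is correct. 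The birationality argument in Theorem~\ref{Flres1} (image in $\overline{S^w}$ by the rank inequalities, a regular section over $S^w$ via $\ell_{p,q}=\ell_p\cap F_q$, then properness to close up) is the same as the paper's. Where the proposal stops short is the Bott--Samelson comparison, which you acknowledge you leave as ``bookkeeping''. Be aware that the paper does \emph{not} attempt a raw incidence-relation-by-incidence-relation dictionary after deleting redundancy; instead Proposition~\ref{P234} shows that the projection $\pi_1:\BBS^w\to\Gr_{w(n)}\times\cdots\times\Gr_{n-1}$ onto the first $n-w(n)$ Grassmannians of the bubblesort word has image the same Kempf--Laksov resolution of $\Gr_{n-w(n)}$ and fiber $\BBS^{w'}$ for the truncated flag, mirroring exactly the fibration of $\hat S^w$ in Proposition~\ref{P12}; Theorem~\ref{BBS} then matches the two fiber-bundle towers inductively, commuting with the projections to $\Fl_{1,\dots,n}(E)$. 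That structural recursion is substantially lighter than the term-by-term matching you sketch, and it is where the actual content of that part lives; without it, the isomorphism statement in the theorem is unproved.
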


It would be interesting to obtain an analogous  description for all Bott-Samelson resolutions of $S^w(F)$, corresponding to other presentations of $w$ as a reduced word.

\vspace{0.1cm}

We turn our attention to the Grassmannian $\Gr_k(E)$ and fix a strictly increasing sequence
\[ 1\leq \beta_1<\beta_2<\ldots< \beta_k\leq n,
\]
henceforth called multi-index. Define the Schubert variety $\overline{V_{\beta}}=\{L\in \Gr_k(E)~|~\dim{L\cap F_{\beta_i}}\geq i\}$. We reserve the notation $V_{\beta}$ for the subset where the dimensional condition is replaced by an equality. Choose a complementary flag $G^\beta$ such that $F_{\beta_i}\oplus G^{\beta_i}= E$ and let $F_{\beta_i}^{\beta_{i-1}}:=F_{\beta_i}\cap G^{\beta_{i-1}}$ and
\[\bP=\prod_{i=1}^k\bP(F_{\beta_i}^{\beta_{i-1}})
\]
Define also 
\[{V^*_{\beta}}:=\{L\in\Gr_k(E)~|~\dim{L\cap G^{\beta_i}}=k-i,~ i=1,\ldots k\}\] with corresponding $\overline{V_{\beta}^*}$.
\begin{theorem} The sets $V_{\beta}$ and $V_{\beta}^*$  are  both diffeomorphic to  vector bundles over $\bP$, namely  to
\[ \bH:=\prod_{i=2}^k\Hom\left(\tau_i,\bigoplus_{j=1}^{i-1}\tau_j^{\perp}\right)\quad\mbox{and}\quad \bH^*:=\prod_{i=1}^k\Hom\left(\tau_i, \bigoplus_{j=i+1}^{k+1} \tau_j^{\perp}\right),
\]
where $\tau_i\ra \bP$ represent the pull-backs of the tautological bundles over $\bP(F_{\beta_i}^{\beta_{i-1}})$ while $\tau_i^{\perp}$ are their complements  within the trivial $\underline{F_{\beta_i}^{\beta_{i-1}}}$ and $\tau_{k+1}^{\perp}:=G^{\beta_k}$. The diffeomorphisms are  explicit and are both induced by taking sums of graphs of linear morphisms.
\end{theorem}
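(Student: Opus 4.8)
The plan is to exhibit, for each of $V_\beta$ and $V_\beta^*$, an explicit bijection with the total space of the relevant bundle over $\bP$ --- obtained by forming \emph{sums of graphs} of linear maps --- and then to verify it is a diffeomorphism. The two constructions are mirror images, exchanging the increasing flag $F_{\beta_\bullet}$ with the decreasing flag $G^{\beta_\bullet}$, so I will do $V_\beta$ in detail and indicate the changes for $V_\beta^*$. Set $E_i:=F_{\beta_i}^{\beta_{i-1}}$ for $1\le i\le k$ and $E_{k+1}:=G^{\beta_k}$ (with $G^{\beta_0}:=E$, so $E_1=F_{\beta_1}$). From $F_{\beta_{i-1}}\subset F_{\beta_i}$ and $F_{\beta_{i-1}}\oplus G^{\beta_{i-1}}=E$ one gets $F_{\beta_i}=F_{\beta_{i-1}}\oplus E_i$, hence inductively
\[ F_{\beta_i}=\bigoplus_{j=1}^{i}E_j\qquad\text{and}\qquad E=\bigoplus_{j=1}^{k+1}E_j; \]
and, since $G^\bullet$ is a flag, $E_j\subset G^{\beta_{j-1}}\subset G^{\beta_i}$ for $j>i$, whence $G^{\beta_i}=\bigoplus_{j=i+1}^{k+1}E_j$ by a dimension count. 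Fix a Hermitian metric on $E$; over $\ell=(\ell_1,\dots,\ell_k)\in\bP$ the fibre of $\tau_i$ is $\ell_i\subset E_i$, that of $\tau_i^\perp$ is the orthogonal complement of $\ell_i$ in $E_i$ for $i\le k$, and $\tau_{k+1}^\perp$ has constant fibre $E_{k+1}$; thus $\underline{E_i}=\tau_i\oplus\tau_i^\perp$ for $i\le k$.

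\emph{The map from $\bH$.} Given $\ell\in\bP$ and $\phi=(\phi_2,\dots,\phi_k)$ with $\phi_i\in\Hom(\tau_i,\bigoplus_{j<i}\tau_j^\perp)$, I would set $\phi_1:=0$ and define $\Psi(\ell,\phi):=\{\sum_i v_i+\sum_i\phi_i(v_i):v_i\in\ell_i\}\subset E$, a graph and hence a $k$-plane. One first checks $\Psi(\ell,\phi)\in V_\beta$: if such an element lies in $F_{\beta_m}=\bigoplus_{j\le m}E_j$, inspecting its $E_j$-components for $j=k,k-1,\dots,m+1$ in turn --- each being $v_j$ plus $\tau_j^\perp$-parts of the $\phi_l(v_l)$ with $l>j$ --- forces $v_j=0$ for all $j>m$, so $\dim(\Psi(\ell,\phi)\cap F_{\beta_m})=m$ for every $m$. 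The map $\Psi$ is manifestly smooth (polynomial in $\phi$, smooth in $\ell$ through $\tau_\bullet,\tau_\bullet^\perp$) and covers $\id_\bP$.

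\emph{The inverse, and a transversality lemma.} For $L\in V_\beta$ put $L_i:=L\cap F_{\beta_i}$, so $L_1\subset\cdots\subset L_k=L$ with $\dim L_i=i$, and let $q_i:F_{\beta_i}=F_{\beta_{i-1}}\oplus E_i\to E_i$ be the projection. Then $\ker(q_i|_{L_i})=L_i\cap F_{\beta_{i-1}}=L_{i-1}$, so $\ell_i:=q_i(L_i)$ is a line in $E_i$ and $L\mapsto(\ell_1,\dots,\ell_k)$ is a well-defined map $V_\beta\to\bP$. The crux, to be proved by induction on $i$, is the transversality assertion
\[ L_i\cap\bigoplus_{j=1}^{i}\tau_j^\perp=0. \]
Indeed, if $v$ lies in the left-hand side then $q_i(v)\in\tau_i^\perp$ (as $\tau_j^\perp\subset E_j\subset F_{\beta_{i-1}}$ for $j<i$) and $q_i(v)\in q_i(L_i)=\ell_i$, so $q_i(v)\in\ell_i\cap\tau_i^\perp=0$; hence $v\in L_{i-1}\subset F_{\beta_{i-1}}$, its $E_i$-component vanishes, and $v\in L_{i-1}\cap\bigoplus_{j<i}\tau_j^\perp=0$ by the inductive hypothesis (base case $L_1=\ell_1$). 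Since $F_{\beta_i}=(\bigoplus_{j\le i}\ell_j)\oplus(\bigoplus_{j\le i}\tau_j^\perp)$ and $\dim L_i=i=\dim\bigoplus_{j\le i}\ell_j$, this forces $L_i$ to be the graph of a linear map $\psi_i:\bigoplus_{j\le i}\ell_j\to\bigoplus_{j\le i}\tau_j^\perp$; a further inspection of $E_i$-components (with $L_{i-1}=L_i\cap F_{\beta_{i-1}}$) shows that $\psi_i$ restricts to $\psi_{i-1}$ on $\bigoplus_{j<i}\ell_j$ and that $\psi_i(\ell_i)\subseteq\bigoplus_{j<i}\tau_j^\perp$, so the blocks $\phi_i:=\psi_k|_{\ell_i}\in\Hom(\ell_i,\bigoplus_{j<i}\tau_j^\perp)$ are well defined ($\phi_1=0$) and $L=\Psi((\ell_\bullet),(\phi_\bullet))$. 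Conversely the data attached to $\Psi(\ell,\phi)$ return $(\ell,\phi)$, so $L\mapsto((\ell_\bullet),(\phi_\bullet))$ is a two-sided inverse of $\Psi$; it is smooth, since $L\mapsto L_i=L\cap F_{\beta_i}$ has constant rank on $V_\beta$ and hence is a smooth subbundle, so the $\ell_i$ and the inverse of the bundle isomorphism $L_i\cong\bigoplus_{j\le i}\ell_j$ --- and thus $\psi_k$ and the $\phi_i$ --- depend smoothly on $L$. Therefore $\Psi$ is a diffeomorphism from $\bH$ onto $V_\beta$, over $\bP$.

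\emph{The case $V_\beta^*$, and the main difficulty.} Here I would run the same argument with the decreasing chain $E=G^{\beta_0}\supset G^{\beta_1}\supset\cdots\supset G^{\beta_k}$ in place of $F_{\beta_\bullet}$. For $L\in V_\beta^*$ set $L^i:=L\cap G^{\beta_i}$ (so $L=L^0\supset\cdots\supset L^k=0$, $\dim L^i=k-i$); the projection $r_i:G^{\beta_{i-1}}=E_i\oplus G^{\beta_i}\to E_i$ has $\ker(r_i|_{L^{i-1}})=L^i$, so $m_i:=r_i(L^{i-1})$ is a line in $E_i$, giving a point of $\bP$; and by \emph{downward} induction from $L^k=0$ one gets the transversality $L^i\cap\bigoplus_{j=i+1}^{k+1}\tau_j^\perp=0$ (note $\tau_{k+1}^\perp=E_{k+1}=G^{\beta_k}$ now participates). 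This makes $L^i$ the graph of a strictly block-upper-triangular map $\bigoplus_{j=i+1}^{k}m_j\to\bigoplus_{j=i+1}^{k+1}\tau_j^\perp$, with blocks $\phi_i^*\in\Hom(\tau_i,\bigoplus_{j=i+1}^{k+1}\tau_j^\perp)$ for $i=1,\dots,k$ --- the top block $\phi_k^*\in\Hom(\tau_k,\tau_{k+1}^\perp)=\Hom(m_k,G^{\beta_k})$ being in general nonzero, which is precisely why $\bH^*$ has a factor for every $i=1,\dots,k$ while $\bH$ does not. The inverse sends such a datum to $\{\sum_i v_i+\sum_i\phi_i^*(v_i):v_i\in m_i\}$, and the verifications (including smoothness) are word-for-word those above, so $\bH^*$ is diffeomorphic to $V_\beta^*$ over $\bP$. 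I expect essentially all of the work to be the two transversality lemmas together with the linear-algebra bookkeeping that reads off the triangular block structure of $\psi_k$ and of its $V_\beta^*$ analogue; once those are settled, the ``sum of graphs'' maps and their inverses are routine and depend smoothly --- indeed fibrewise holomorphically --- on the data, and the constant-rank observation upgrades the bijections to diffeomorphisms.
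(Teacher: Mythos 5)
Your construction of the forward map $\Psi$ (resp.\ its $V_\beta^*$ analogue) by taking sums of graphs is literally the map $\Phi$ (resp.\ $\Phi^*$) in the paper, and the verification that its image lands in $V_\beta$ is the same componentwise computation. Where you genuinely diverge is in establishing surjectivity and constructing the inverse. The paper first shows, by induction on $i$, that $L\cap F_{\beta_i}=L_1+\sum_{j\le i}\Gamma_{A_j'}$ for auxiliary maps $A_j':L_j\to F_{\beta_{j-1}}$ whose codomain is too large, then splits $A_j'=(B_j,C_j)$ into $\sum L_p$- and $\sum L_p^\perp$-parts and solves an explicit unipotent triangular linear system (the paper's (3.5)) to reparametrise and extract the genuine bundle data $A_j:L_j\to\sum_{p<j}L_p^\perp$. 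You instead prove a direct transversality lemma $L_i\cap\bigoplus_{j\le i}\tau_j^\perp=0$ by induction, which together with the dimension count immediately exhibits $L_i$ as a graph over $\bigoplus_{j\le i}\ell_j$ with the desired codomain, so the block-triangular structure of $\psi_k$ (vanishing diagonal, compatibility of $\psi_i$ with $\psi_{i-1}$) comes out of the same componentwise bookkeeping with no change of variables needed. This is a cleaner and more conceptual route to injectivity-plus-surjectivity, at the price of having to phrase and prove the transversality statement; the paper's route is more computational but hands you an explicit inversion formula. Your smoothness discussion (constant rank of $L\mapsto L\cap F_{\beta_i}$, hence smooth dependence of $\ell_i$, $\psi_i$, $\phi_i$) fills in a point the paper disposes of with a terse ``consequence of part (i).'' The $V_\beta^*$ case you sketch mirrors the paper's Theorem on $\Phi^*$ in the same way. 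I see no gap.
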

We fix a fiber $\bH^*_0$ of $\bH^*\ra \bP$, to wit
\[\bH^*_0:=\prod_{i=1}^k\Hom\left(L_i,\sum_{j=i+1}^{k+1}L_j^{\perp}\right)
\]
for fixed choices of lines $L_i\subset F_{\beta_i}^{\beta_{i-1}}$ and complements $L_i^{\perp}\subset F_{\beta_i}^{\beta_{i-1}}$ (naturally, $L_{k+1}^{\perp}:=G^{\beta_k}$). 

If $\Fl_{\beta_1,\ldots,\beta_k}(E)$ is the space of flags of subspaces having dimensions $\beta_1,\beta_2,\ldots, \beta_k$ then there exists a natural embedding
\begin{equation}\label{Hoeq1}\bH^*_0\ra\Fl_{\beta_1,\ldots,\beta_k}(E),\qquad (A_1,\ldots, A_k)\ra\left(\ldots, \sum_{j=1}^{i}\left(\Gamma_{A_j}+L_j^{\perp}\right),\ldots \right)
\end{equation}
 
We  look for a compactification of this embedding. The object that we get has the following structure.
\begin{definition}Let $W_2,\ldots W_k$ be vector subspaces of $E$ and let $1\leq a_1 <a_2 <...<a_k \leq  n$ be natural numbers. Then a subset $S$ of $\prod_{i=1}^k \Gr_{a_i}(E)$ described by the following type of incidence relations:
\[S=\{(\ell_1,\ldots,\ell_k)~|~\ell_i\subset \ell_{i+1}+W_{i+1}, ~i\leq k-1\}\]
is called a $W_*$-flag variety. Moreover if each $\ell_i$, $1\leq i\leq k$ is constrained to lie in a certain vector subspace $V_i \subset E$ such that $a_i\leq \dim{V_i}$ one talks about a  constrained $W$-flag variety.
\end{definition}
\begin{example} Notice that for $W_i\equiv 0,~\forall i$ one recovers the definition of a flag manifold while for $W_i\equiv E,~\forall i$ one gets the product of Grassmannians. 

The Kempf-Laksov resolution  (see \ref{KL1} below) of a Schubert variety in the Grassmannian  is an  $F_*$-constrained $W$-flag variety, where $W=0$ and $F_*$ is the partial flag.
\end{example}

It turns out that in our case, $\bH^*_0$ compactifies to the  constrained $W$-flag variety 
\[ \mathcal{G}\subset \prod_{i=1}^k\Gr_i(V_i^i) \quad\mbox{ where}\]
\begin{equation}\label{cons1} W_{i}:=L_{i}^{\perp},\quad \forall~2\leq i\leq k, \mbox{ and }\;\; V_i^i=\sum_{j=1}^iL_j+\sum_{j=i+1}^{k+1}L_j^{\perp}.
\end{equation}
\begin{rem} Deformations of flag varieties have been considered before, we mention here  the work of Feigin \& all \cite{CFFFR, F}. They are related to $W$-flag varieties but they are not the same. For example, for a map $f:E\ra E$ which is the projection onto a space $W^{\perp}$ induced by a decomposition $E=W\oplus W^{\perp}$, a condition as in \cite{CFFFR} of type $f(\ell_1)\subset \ell_2$ translates into $f(\ell_1)\subset \ell_2\cap W^{\perp}$ which is equivalent to $\ell_1\subset \ell_2\cap W^{\perp}+W$. The space $\ell_2\cap W^{\perp}+W$ is strictly contained in $\ell_2+W$ in general.  Note though that the constrains in (\ref{cons1}) do say that in fact the elements of the flag $\ell_*$ are subspaces of complements of $W_*$. Understanding the connections between these objects seems like an interesting question. 
\end{rem}

We do not have a simple recipe  to desingularize a general $W-$flag variety. However, in the case at hand, we got lucky. Let
\[\widehat{G}:=\prod_{1\leq j\leq i\leq k}\Gr_j(V^i_j)
\]
where, for $j\leq i$ define
\[ V^i_j:=\sum_{p=1}^jL_p+\sum_{p=i+1}^{k+1}L_p^{\perp}.
\]
Use the notation $\ell_{*}^*:=(\ell_j^i)_{i,j}\in\widehat{G}$ with $\ell_j^i\in\Gr_j(V^i_j)$.
\begin{theorem} The set
\[ \mwG:=\{\ell_{*}^*\in \widehat{G}~|~\ell_j^i\subset\ell_{j+1}^i,~ \ell_j^i\subset \ell_j^{i+1}+L_{i+1}^{\perp},~\forall~i,j\leq k-1\}
\]
is a complex manifold and the projection $\pi:\mwG\ra \mathcal{G}$:
\[ \ell_*^*\ra (\ell_i^i)_{1\leq i\leq k}
\]
is a direct resolution of the  $\mathcal{G}$.
\end{theorem}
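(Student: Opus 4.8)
The strategy is to exhibit $\mwG$ as an iterated tower of Grassmann (projective) bundles, from which both smoothness and the structure of the resolution map follow. The key observation is that the two families of incidence relations organize $\mwG$ into a nested structure along the index $i$: fix the "column index" $i$ and the relations $\ell_j^i\subset\ell_{j+1}^i$ (for $j\le i-1$) say that $(\ell_1^i,\ldots,\ell_i^i)$ is an honest partial flag inside $V_i^i=\sum_{p=1}^iL_p+\sum_{p=i+1}^{k+1}L_p^\perp$; meanwhile the relations $\ell_j^i\subset\ell_j^{i+1}+L_{i+1}^\perp$ couple consecutive columns. I would first record the elementary dimension bookkeeping: $\dim V_j^i = j + (n'-i)$ for an appropriate constant $n'$ (coming from $\sum\dim L_p^\perp$), so that $\Gr_j(V_j^i)$ is non-empty exactly when the stated inequalities hold, and the incidence relations are compatible with these dimensions (this is the analogue of the remark, already in the text, that the rank matrix is monotone along lines and columns).

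\vspace{0.1cm}

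\noindent\textbf{Step 1 (reduction to a Kempf--Laksov tower).} I would build $\mwG$ by adding the indices $(i,j)$ in the order of increasing $i$, and for fixed $i$ in order of increasing $j$. Starting from $i=1$: $\ell_1^1$ ranges over $\Gr_1(V_1^1)=\bP(V_1^1)$, which is smooth. Passing from the data of columns $\le i$ to column $i+1$: given a point already constructed, the new coordinates $\ell_1^{i+1},\ldots,\ell_{i+1}^{i+1}$ must satisfy $\ell_j^{i+1}\subset\ell_{j+1}^{i+1}$ and $\ell_j^i\subset\ell_j^{i+1}+L_{i+1}^\perp$. I claim the fiber of this forgetful map is itself a Kempf--Laksov–type tower, hence smooth: $\ell_1^{i+1}$ must contain (the image of) nothing yet but must satisfy $\ell_1^i\subset\ell_1^{i+1}+L_{i+1}^\perp$, i.e.\ $\ell_1^{i+1}$ is a line in $V_1^{i+1}$ whose sum with $L_{i+1}^\perp$ contains the fixed line $\ell_1^i$; the set of such lines is an open (in fact Zariski-locally trivial) subset of a projective space, or a projective space itself, because modding out by $L_{i+1}^\perp$ turns the condition into "$\bar\ell_1^{i+1}\supset\overline{\ell_1^i}$" in the quotient. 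Inductively, once $\ell_1^{i+1},\ldots,\ell_{j-1}^{i+1}$ are fixed, $\ell_j^{i+1}$ ranges over those $j$-planes in $V_j^{i+1}$ containing $\ell_{j-1}^{i+1}$ and with $\ell_j^i\subset\ell_j^{i+1}+L_{i+1}^\perp$; after quotienting by $\ell_{j-1}^{i+1}$ and intersecting the two linear constraints, this is the space of lines in a fixed vector space subject to containing a fixed line after projection modulo a fixed subspace — again a Grassmannian bundle (a projectivized bundle) over what was built so far. Concatenating all these bundle projections exhibits $\mwG$ as the total space of an iterated $\bP$-bundle over a point, hence a smooth projective variety of dimension equal to the sum of the fiber dimensions; I would then check this dimension count matches $\dim\mathcal{G}$.

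\vspace{0.1cm}

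\noindent\textbf{Step 2 (the map $\pi$ is well defined and proper).} Properness is automatic since $\mwG$ is projective and $\pi$ is a morphism of varieties. That $\pi$ lands in $\mathcal{G}$ is the content of checking that the diagonal $\ell_i^i$ satisfies $\ell_i^i\subset\ell_{i+1}^{i+1}+L_{i+1}^\perp$: indeed $\ell_i^i\subset\ell_i^{i+1}+L_{i+1}^\perp\subset\ell_{i+1}^{i+1}+L_{i+1}^\perp$, using one relation of each type, and the constraint $\ell_i^i\subset V_i^i$ is built in.

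\vspace{0.1cm}

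\noindent\textbf{Step 3 (birationality).} Here is where the real work lies. I would identify an explicit Zariski-dense open subset $U\subset\mathcal{G}$ over which $\pi$ is an isomorphism, by writing down an inverse. The natural candidate for $U$ is the image of $\bH^*_0$ under (\ref{Hoeq1}), i.e.\ the locus where all the relevant sums are as transverse as possible: concretely, where $\ell_i^i\cap L_{i+1}^\perp=0$ and, more generally, each $\ell_i^i$ is the graph-type complement described by the diffeomorphisms of the second theorem. Over such a point one reconstructs $\ell_j^i$ for $j<i$ as follows: the data of the diagonal amounts to a tuple of linear maps $A_p:L_p\to\sum_{q>p}L_q^\perp$, and one sets $\ell_j^i$ to be the span of the graphs $\Gamma_{A_p}$ for $p\le j$ together with the "tail" pieces of $L_p^\perp$ for appropriate ranges — precisely the recipe that makes the column-$i$ flag the Kempf--Laksov flag of the Schubert variety attached to that normal direction, as explained in the introduction. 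I would verify that this assignment is a morphism $U\to\mwG$ (regularity is clear because the graphs depend polynomially on the $A_p$) and that it is a two-sided inverse to $\pi$ over $U$. Finally, one notes $U$ is dense: it is open and nonempty (it contains the open cell), and $\mathcal{G}$ is irreducible because it is the image of the irreducible $\mwG$ under $\pi$ — or, alternatively, because the generic transversality defining $U$ holds on a dense set.

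\vspace{0.1cm}

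\noindent\textbf{Main obstacle.} The fibered-tower argument of Step~1 is the crux: one must check, at each stage, that intersecting the "containment" relation $\ell_j^{i+1}\supset\ell_{j-1}^{i+1}$ with the "sum" relation $\ell_j^i\subset\ell_j^{i+1}+L_{i+1}^\perp$ produces a bundle of constant fiber dimension — i.e.\ that the two linear conditions are never degenerate relative to each other in a way that jumps the dimension. This is exactly where the specific form of the vector spaces $V^i_j=\sum_{p\le j}L_p+\sum_{p>i}L_p^\perp$ and the fact that $L_{i+1}^\perp$ is one of the summands of $V_j^i$ but \emph{not} of $V_j^{i+1}$ must be used; getting this combinatorics right (equivalently, identifying the quotient $(\ell_j^{i+1}+L_{i+1}^\perp)/L_{i+1}^\perp$ inside $V_j^i/L_{i+1}^\perp$ cleanly) is the technical heart. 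Once the tower is set up correctly, smoothness, the dimension formula, and properness are formal, and only the explicit inverse in Step~3 requires care.
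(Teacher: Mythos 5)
Your Step 1 contains a genuine gap, and it is not a merely technical one: the fiber dimension really does jump in the direction of induction you chose. You build the tower by increasing $i$, so at each new $i+1$ the constraint $\ell_j^i\subset\ell_j^{i+1}+L_{i+1}^{\perp}$ has $\ell_j^i$ \emph{fixed} and $\ell_j^{i+1}$ as the unknown. Since $V_j^i=V_j^{i+1}\oplus L_{i+1}^{\perp}$, letting $P$ denote projection onto $V_j^{i+1}$, this constraint reads $\ell_j^{i+1}\supset P(\ell_j^i)$. But $\dim P(\ell_j^i)=j-\dim(\ell_j^i\cap L_{i+1}^{\perp})$ is not constant. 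Already for $j=1$: the space $V_1^1=L_1+\sum_{p\geq 2}L_p^{\perp}$ contains $L_2^{\perp}$, so $\ell_1^1\subset L_2^{\perp}$ is a nonempty closed condition; when it holds, $P(\ell_1^1)=0$ and $\ell_1^2$ ranges over all of $\bP(V_1^2)$, while generically $\ell_1^2$ is the single point $P(\ell_1^1)$. The intersection with the chain condition $\ell_j^{i+1}\supset\ell_{j-1}^{i+1}$ does not repair this: writing $S=\ell_{j-1}^{i+1}+P(\ell_j^i)$, one checks $\dim S\in\{j-1,j\}$ and both values occur. So there is no Grassmann-bundle (or $\bP$-bundle) structure over the data of the previous columns, and smoothness cannot be obtained this way. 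You flagged precisely this constancy as ``the technical heart'' but treated it as something to verify rather than something that fails.

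The paper's proof runs the induction in the \emph{opposite} direction, from row $i=k$ down to $i=1$, precisely to avoid this: given the row $i+1$, the constraint $\ell_j^i\subset\ell_j^{i+1}+L_{i+1}^{\perp}$ has $\ell_j^{i+1}$ fixed, and $\ell_j^{i+1}+L_{i+1}^{\perp}$ always has the fixed dimension $j+\dim L_{i+1}^{\perp}$ (since $\ell_j^{i+1}\subset V_j^{i+1}$ and $V_j^{i+1}\cap L_{i+1}^{\perp}=\{0\}$); moreover these spaces form a flag as $j$ varies, so the row $i$ given row $i+1$ is exactly a Kempf--Laksov fiber of constant biholomorphism type. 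The last row $i=k$ (which has no condition (II.)) is itself such a tower. Your Steps 2 and 3 are in the right spirit and close to the paper's argument (which proves injectivity over the open set $U$ by showing $\ell_j^i=\ell_i^i\cap V_j^i$ there, and surjectivity onto $\mathcal{G}$ by a direct diagonal-by-diagonal construction with a case split on $\dim(\ell_{i+1}\cap V_i^{i+1})$), but they rest on Step 1, so the reversal is needed first; also note your irreducibility remark for $\mathcal{G}$ presupposes the surjectivity you are trying to use it to deduce.
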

 The resolution $\widehat{\mathcal{G}}$ comes with a well-defined map that extends (\ref{Hoeq1})
 \[ \widehat{\Psi}:\mwG\ra\Fl_{\beta_1,\ldots,\beta_k}(E),\qquad \widehat{\Psi}(\ell_*^*)=\left(\ldots, \ell_i^i+\sum_{j=1}^iL_j^{\perp},\ldots\right)
 \]
 In order to obtain the embedded resolution of $\overline{V_{\beta}}$ inside $\Gr_k(E)$ we recall that $\Fl_{\beta_1,\ldots,\beta_k}(E)$ is the base space of a fiber bundle $\mathcal{F}(E)$ which is a subbundle of the trivial  bundle  $\Fl_{1,\ldots,k}(E)\times \Fl_{\beta_1,\ldots,\beta_k}(E)$ and whose fiber over $F_{\beta}$ is the well-known Kempf-Laksov resolution of $\overline{V_{\beta}}(F)$, namely:
 \begin{equation}\label{KL1}\widehat{V_{\beta}}(F):=\{(L_1,\ldots,L_k)\in \prod_{i=1}^k\Gr_i(F_{\beta_i})~|~L_1\subset L_2\subset \ldots\subset L_k\}\ra \Gr_k(E),\quad L_*\ra L_k.
 \end{equation}
 \begin{theorem} The fiber product $\widehat{\Psi}^*\mathcal{F}(E)$ of $\widehat{\Psi}$ with the projection $\mathcal{F}(E)\ra \Fl_{\beta_1,\ldots,\beta_k}(E)$ together with the natural map:
 \[ \widehat{\Psi}^*\mathcal{F}(E)\ra \Gr_k(E),\qquad (\ell_{*}^*, L_*)\ra L_k 
 \]
 forms an embedded resolution of $\overline{V_{\beta}}$ inside the Grassmannian $\Gr_k(E)$.
 \end{theorem}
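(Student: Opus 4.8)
The plan is to set $\tilde Y:=\widehat{\Psi}^{*}\mathcal F(E)$, $Y:=\Gr_{k}(E)$, $X:=\overline{V_\beta}$ and use the map $\pi$ of the statement, to single out a subvariety $\tilde X$, and to verify the three conditions of Definition~\ref{defo1}. Two requirements are structural. Smoothness: $\mathcal F(E)\ra\Fl_{\beta_{1},\ldots,\beta_{k}}(E)$ is a locally trivial fibre bundle whose fibre is the Kempf--Laksov resolution $\widehat{V_\beta}(F)$ of~(\ref{KL1}), an iterated Grassmannian bundle, hence a smooth projective irreducible variety; so its pull-back $\tilde Y=\mwG\times_{\Fl_{\beta_{1},\ldots,\beta_{k}}(E)}\mathcal F(E)$ along $\widehat\Psi$ is a fibre bundle over the complex manifold $\mwG$ with that same fibre, and $\tilde Y$ is a smooth compact complex manifold; it is irreducible because $\mwG$ is (being a resolution of $\mathcal G$, which contains the vector space $\bH^{*}_{0}$ as a dense open subset) and the fibre is. Properness: $\tilde Y$ is compact, so $\pi$ is a proper morphism into the separated $\Gr_{k}(E)$ and has closed image.

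To define $\tilde X$, take $p_{0}\in\mwG$ with $\ell_{j}^{i}:=L_{1}\oplus\cdots\oplus L_{j}$ for all $1\le j\le i\le k$; using~(\ref{cons1}) one checks that $p_{0}\in\mwG$, that $\widehat\Psi(p_{0})$ is the partial flag $(F_{\beta_{1}},\ldots,F_{\beta_{k}})$ underlying $F_{*}$, and in fact that this partial flag has $\{p_{0}\}$ as its whole $\widehat\Psi$-preimage. I would isolate this last equality as a lemma: $\widehat\Psi(\ell_{*}^{*})=(F_{\beta_1},\ldots,F_{\beta_k})$ means $\ell_{i}^{i}+\sum_{j\le i}L_{j}^{\perp}=F_{\beta_{i}}$ for all $i$, which forces $\ell_{i}^{i}\subset F_{\beta_{i}}$; since $\dim\ell_{i}^{i}=i=\dim F_{\beta_{i}}-\dim\sum_{j\le i}L_{j}^{\perp}$, $\ell_{i}^{i}\subset V_{i}^{i}$, and $V_{i}^{i}\cap F_{\beta_{i}}=\bigoplus_{j\le i}L_{j}$ (the summands $L_{i+1}^{\perp},\ldots,L_{k+1}^{\perp}$ of $V_{i}^{i}$ lie in $G^{\beta_{i}}$, which meets $F_{\beta_{i}}$ trivially), one gets $\ell_{i}^{i}=\bigoplus_{j\le i}L_{j}$, and then the vertical incidences $\ell_{j}^{i}\subset\ell_{j+1}^{i}$ together with $\ell_{j}^{i}\subset V_{j}^{i}$ pin down $\ell_{j}^{i}=\bigoplus_{p\le j}L_{p}$ throughout. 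Set $\tilde X:=$ the fibre of $\tilde Y\ra\mwG$ over $p_{0}$; under the fibre-product description this is $\{p_{0}\}\times\widehat{V_\beta}(F)$, a smooth closed subvariety of $\tilde Y$ (a fibre of a smooth morphism). With this choice, condition~(b) is exactly the classical fact that the Kempf--Laksov map $\widehat{V_\beta}(F)\ra\overline{V_\beta}(F)=X$, $L_{*}\mapsto L_{k}$, is birational (an isomorphism over the open cell $V_{\beta}$), since $\pi|_{\tilde X}$ is that map under $\tilde X\cong\widehat{V_\beta}(F)$.

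For~(a), surjectivity of $\pi$ follows from the picture recalled in the introduction: $\bH^{*}_{0}$ embeds as a dense open subset of $\mwG$ with $\widehat\Psi$ restricting there to the embedding~(\ref{Hoeq1}) (this is the content of $\widehat\Psi$ extending~(\ref{Hoeq1})), and the open cells $V_{\beta}\bigl(\widehat\Psi(A)\bigr)$, $A\in\bH^{*}_{0}$, form a \emph{disjoint} decomposition of a dense open $U_{0}\subset\Gr_{k}(E)$; since the image of $\pi$ is closed and contains $U_{0}$, it is all of $\Gr_{k}(E)$ (note also that $A=0$ maps under~(\ref{Hoeq1}) to $(F_{\beta_1},\ldots,F_{\beta_k})$, hence to $p_{0}$ in $\mwG$, so $V_{\beta}\subset U_{0}$). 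To upgrade this to birationality I would combine the disjointness with the dimension count: a direct computation (using $\dim L_{j}^{\perp}=\beta_{j}-\beta_{j-1}-1$ for $j\le k$, with $\beta_{0}:=0$, and $\dim L_{k+1}^{\perp}=n-\beta_{k}$) gives
\[
\dim\mwG+\dim\widehat{V_\beta}(F)=\dim\bH^{*}_{0}+\sum_{i=1}^{k}(\beta_i-i)=k(n-k)=\dim\Gr_{k}(E),
\]
so $\pi$ is a dominant morphism of irreducible varieties of equal dimension, hence generically finite; being generically injective (a generic point of $\Gr_k(E)$ lies in exactly one member of the family $\{\overline{V_\beta}(\widehat\Psi(\ell^*_*))\}_{\ell^*_*\in\mwG}$, the boundary members $\ell^*_*\in\mwG\setminus\bH^*_0$ contributing only a proper closed subset by dimension) it has degree one, and so is birational onto the smooth --- hence normal --- $\Gr_{k}(E)$.

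Finally, for~(c), let $N\subset\tilde Y$ be the closed proper locus where $\pi$ is not a local isomorphism. The point is that $\tilde X\not\subset N$: for a generic $L\in V_{\beta}$ the only member of the family passing through $L$ is $\overline{V_\beta}=\overline{V_\beta}(\widehat\Psi(p_{0}))$, so by the lemma the only preimage of such $L$ is $(p_{0},L_{*}^{\mathrm{can}})\in\tilde X$ with $L_{i}^{\mathrm{can}}=L\cap F_{\beta_{i}}$, and this point lies off $N$. Granting $\tilde X\not\subset N$: $\tilde X\setminus N$ is dense open in the irreducible $\tilde X$, its image $\pi(\tilde X\setminus N)$ is dense constructible in $\overline{V_\beta}$, and $\pi^{-1}(\pi(\tilde x))=\{\tilde x\}$ for every $\tilde x\in\tilde X\setminus N$; choosing a dense open $U\subset\overline{V_\beta}$ inside $\pi(\tilde X\setminus N)$ and putting $X':=\overline{V_\beta}\setminus U$ gives $\pi^{-1}(X\setminus X')\subset\tilde X$, which is~(c). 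The hard part --- everything else being bookkeeping once it is available --- is the "slice" input used here and in~(a): that the family of translated Schubert varieties indexed by $\mwG$ meets the generic point of $\Gr_{k}(E)$, and the generic point of $\overline{V_\beta}$, in a single member. Its set-level form (disjointness of the open cells $V_\beta(\widehat\Psi(A))$ and density of their union) is the Morse-flow statement of \cite{HL,Wo} carried out in the preceding sections, and the dimension identity above is what promotes it to the scheme-theoretic degree-one conclusion.
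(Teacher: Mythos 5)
Your proof takes a genuinely different route from the paper's, which is worth comparing. The paper's proof is concrete and computational: it works over the explicit open chart $W=\Hom\bigl(\sum_i L_i,\sum_i L_i^{\perp}\bigr)$ and shows $\pi$ is one-to-one there by directly unwinding the incidence relations. The crucial technical step is the observation $\ell_i^i\cap\bigl(\sum_{j\geq i+1}L_j^{\perp}\bigr)=\{0\}$ (proved by a dimension contradiction), which forces $\ell_i^i$ to be a graph over $\sum_{j\leq i}L_j$, hence $(\ell_i^i)_i\in\mathcal G\cap U$, over which $\pi:\mwG\to\mathcal G$ is already known to be a biholomorphism; this determines $\ell_*^*$, while the equality $\dim\Gamma_T\cap F'_{\beta_i}=i$ determines $V_*$. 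The paper then identifies $W\cap\overline{V_\beta}=V_\beta^{\circ}$ and concludes. You instead isolate the fibre $\tilde X=\{p_0\}\times\widehat{V_\beta}(F)$ via an explicit lemma on $\widehat\Psi^{-1}(F_\beta)$, reduce birationality to a dimension identity plus generic injectivity, and handle condition (c) abstractly through the exceptional locus and Zariski's main theorem. Your approach is more conceptual and delegates the preimage analysis to a "slice input," whereas the paper proves uniqueness over all of $W$ by hand, which also yields (c) directly with $X'=\overline{V_\beta}\setminus V_\beta^{\circ}$ and no appeal to Zariski's main theorem.

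A few points need tightening. First, you attribute the slice input (disjointness of the cells $V_\beta(\widehat\Psi(A))$, density of their union) to "the Morse-flow statement of \cite{HL,Wo} carried out in the preceding sections," but Section~3 only identifies $V_\beta$ and $V_\beta^*$ as bundles over $\mathbb P$; the disjointness and density you need over $\bH^*_0$ is precisely the content of Theorem~\ref{thm41} and its proof (uniqueness of $(A_*,V_*)$ for $T\in W$). Second, your parenthetical "a generic point of $\Gr_k(E)$ lies in exactly one member of the family $\{\overline{V_\beta}(\widehat\Psi(\ell_*^*))\}$" is a statement about closures, which is not literally what you use: what you need and can extract from Theorem~\ref{thm41} plus your boundary-dimension argument is that a generic $L$ lies in the \emph{open} cell $V_\beta(\widehat\Psi(\ell_*^*))$ for a unique $\ell_*^*\in\bH^*_0$ (whence $V_*=L\cap\widehat\Psi(\ell_*^*)_*$ is forced) and in no $\overline{V_\beta}(\widehat\Psi(\ell_*^*))$ for $\ell_*^*$ off $\bH^*_0$. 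Third, your assertion $\pi^{-1}(\pi(\tilde x))=\{\tilde x\}$ for $\tilde x\in\tilde X\setminus N$ requires you to know that the non-local-isomorphism locus of a proper birational morphism onto a normal variety coincides with the exceptional locus; this is true by Zariski's main theorem (connectedness of fibres rules out a second point in a fibre containing an isolated point), but it should be said. None of these is a fatal gap — the ingredients are all in the paper — but as written they are misattributed or left implicit at exactly the places where the paper does its real work.
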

 
The question of whether this embedded resolution is isomorphic with some Bott-Samelson type construction seems to us quite interesting.

The results of this note aimed at presenting alternative ways of constructing desingularizations for some thoroughly studied objects like the Schubert varieties. The unifying thread and the main novelty was the use of  incidence relations in two directions, or bioriented flag manifolds.  While the Bott-Samelson construction \cite{BS,De,Ha} or the algorithm in characteristic $0$ of building resolutions via successive blow-ups as developed by Hironaka \cite{Hi} and later simplified by Bierstone-Milman \cite{BM}, Villamayor\cite{Vi}, Wlodarczyk \cite{Wl} and others achieve the same purpose, the constructions we produce here are quite elementary. In the $C^\infty$ setting one should also consult the results of Duan \cite{Du} and Harvey and Lawson \cite{HL}.\\

\noindent
\emph{Acknowledgements:} Most of this work was done during the period I spent at the  Institutul de Matematica al Academiei Romane (IMAR) in 2017. I would like to thank  the Institute for hospitality, especially  Lucian Beznea for invitation and support and   Cezar Joita and Sergiu Moroianu for interesting mathematical conversations.

This article benefitted a lot from the comments of the anonymous referee to whom I am clearly indebted. It was her/his suggestion that the bioriented flag resolution of $S^w(F)$ was the bubblesort Bott-Samelson resolution, now Theorem \ref{BBS},  proved only after an initial version of this paper was made public.

\section{The resolution of the flag Schubert varieties} 

We fix a complete flag $F_*\in \Fl_{1,2,\ldots n}(E)$ and a complementary decreasing flag $G^*$ such that
\[ F_i\oplus G^i=E.
\]
The Schubert cell in $\Fl_{1,2,\ldots n}(E)$ associated to $w\in S_n$ (and $F_*$) is
\begin{equation}\label{Sw1} S^w:=\{\ell_*\in\Fl_{1,2,\ldots n}(E)~|~\dim{\ell_p\cap F_q}=\#\{i\in\{1,\ldots, n\}~|~i\leq p,~ w(i)\leq q\}
\end{equation}
Another way to put it is that if  $\ell_*\in S^w$, then for all all $i$, $w(i)$ is the  unique place of jump  (discontinuity) for the non-decreasing function
\begin{equation}\label{eq4} \{0,\ldots,n\}\ni k\ra \dim{\ell_i\cap F_k}-\dim{\ell_{i-1}\cap F_k} \in \{0,1\}.
\end{equation}
where by definition $\ell_0=F_0=\{0\}$.
The Schubert variety is the closure of the Schubert cell $S^w$ and is described by inequalities $\geq$  in place of $=$ in (\ref{Sw1}).

Notice that the flag $F^w_*\in S^w$ where $F^w_p:=\displaystyle{\bigoplus_{j=1}^{p}} F_{w(j)}\cap G^{w(j)-1}$  since
\[ \dim{F^w_p\cap F_q}=\# w(\{1,\ldots, p\})\cap\{1,\ldots, q\}=\#\{i~|~i\leq p, w(i)\leq q\}.
\]
\begin{definition} The rank matrix associated to $S^w$ is the $n\times n$ matrix of integers  $D^w$ with
\[ d_{pq}:=D_{pq}^w=\#\{i\in\{1,\ldots, n\}~|~i\leq p,~ w(i)\leq q\}
\]
\end{definition}
The rank matrix has the property that it is \emph{slowly} increasing along every line and every column, i.e. $d_{p,q+1}-d_{p,q}\in\{0,1\} \ni d_{p+1,q}-d_{p,q}$ and that $d_{pn}=p=d_{np}$.  In fact, the difference between two consecutive lines is represented by the  values of the  function $(\ref{eq4})$ for $k>0$.

\begin{definition} \label{Def22}  The product  space $\mathcal{P}^w$ associated to $D^w$ is the product of $n\times n$ spaces where the space on position $(p,q)$ is $\Gr_{D_{pq}^w}:=\Gr_{D_{pq}^w}(E)$. 
\end{definition}
Hence the first line contains only point spaces and projective spaces, the second line contains point-spaces, projective spaces and Grassmannians of $2$-planes, etc.

 \begin{definition} The bioriented flag  associated to the dimension matrix $D^w$ is the subset $\Fl^{w}\subset \mathcal{P}^w$ which consists of linear subspaces $\ell_{p,q}\in\Gr_{D_{pq}^w}$ such that 
 \[ \ell_{p,q}\subset \ell_{p,q+1}\quad\mbox{and}\quad\ell_{p,q}\subset\ell_{p+1,q}
 \]
 Denote by $\ell_{*,*}\in \Fl^w$ a general element.
 \end{definition}
 \begin{rem}\label{Redrem} There is a lot of redundancy in the definition of the bioriented flag, in the sense that if $\ell_{*,*}\in\Fl^{w}$ then some of its coordinates in a line, or in a column might be equal. One can safely remove this redundancy by eliminating already from the product space $\mathcal{P}^w$  the copies of the same Grassmanian which appear either along a line or along a column and keeping just the first copy, where by first we mean the first from left to right and from up to down. However, eliminating redundancy brings quite of bit of complication, and not only notational. The simplicity of the construction is directly related to the presence of the superfluous copies of the Grassmannian. 
 
  Since the essential set of Fulton \cite{Fu1} determines the rank function and therefore the rank matrix for every $w$ it will of course determine the number of copies of $\Gr_i$ obtained after eliminating redundancy. While it certainly seems interesting to have such a direct relation,  we take a different path here, namely we devise a simple algorithm that starts with the graph of the permutation and delivers the number copies of $\Gr_i$ for each $1\leq i\leq \dim {E}-1$ that appear in this process after eliminating redundancy.  Rather than writing the entire rank matrix and then deleting the superfluous entries, one can proceed as follows.
  \begin{itemize}
  \item[(i)] Write down the  directed graph which is obtained from  the natural partial order relation induced by the lexicographic order on the graph\footnote{The other notion of  graph, i.e. pairs $(i,w(i))$ called here labels.} of $w$, i.e. $(i,w(i))<(j,w(j))$ iff  $i<j$ and $w(i)<w(j)$.  
  \item[(ii)] Organize the directed graph as a "building" with the smallest floor at level $1$, containing the set of minimal labels with respect to the order relation of item (i). Suppose that indegree\footnote{The number of arrows   connecting it to its smaller, immediate neighbours in the graph} of $(i,w(i))$ is $k-1$. Put then $(i,w(i))$  at level $k$. 
  \item[(iii)] For the apartments (read labels) on a fixed floor there exists a total order relation  given by comparing the first coordinates of the labels. 
  \item[(iv)] Open new "apartments" in the building, on floors $2\leq i\leq n$ by doing the following. For each pair $(p,w(p)), (q,w(q))$ of \emph{consecutive} (with respect to the  order of item (iii)) labels  on floor $i-1$  open an apartment at level $i$ with the new label given by the pair $(\max{\{p,q\}},\max{\{w(p),w(q)\}})$.  
    \item[(v)] There exists a unique apartment on level $n$ with label $(n,n)$ which will be  discarded. 
    \item[(vi)] Replace each label/apartment at level $1\leq i\leq n-1$ by a copy of $\Gr_i$.
          \end{itemize}
          
          Item (v) can be justified by noticing that the label of an apartments on a given floor $i$ has both components bigger or equal $i$.    
          
          This algorithm puts together a few easy to justify facts. We will use the expression "non-redundant" as a synonym for survival at the end of the elimination process. 
          \begin{itemize}
          \item[(1)] For any $i$ and for any non-redundant copy of $\Gr_i$ there exists no other non-redundant copy of $\Gr_i$ on the same row or column. Moreover all the  non-redundant copies of $\Gr_i$ live either in the north-east region (rectangle) or in the south-west region (rectangle) of any other non-redundant  $\Gr_i$.
          \item[(2)] For any $i$, the non-redundant immediate neighbours to the north and to the west of any non-redundant $\Gr_i$ are either copies of $\Gr_{i-1}$ or the boundary of the matrix while the immediate neighbours to the south and west are copies of $\Gr_{i+1}$ or the boundary of the matrix.
            \item[(3)] For a fixed $i$ there exists a total order on the non-redundant copies of $\Gr_i$: say that $\Gr_i<\Gr_i'$ if $\Gr_i'$ appears in the north-east region of $\Gr_i$ or equivalently $\Gr_i$ appears in the south-west region of $\Gr_i'$. 
            \item[(4)] For any $i$ and two consecutive (with respect to (3)) non-redundant copies of $\Gr_i$ there exists a unique common immediate neighbour (necessarily a $\Gr_{i+1}$) to the east of the first and to the south of the second. The coordinates of the position of the common neighbour are expressed by the maximum of the coordinates of the two copies of $\Gr_i$.
            \item[(5)] The only redundancy in the graph $\{(i,w(i))~|~1\leq i\leq n\}$ appears when $w(n)=n$, otherwise the copies of the Grassmannian  on these positions survive the elimination process.
          \end{itemize}
          
          An important observation which  is obvious in the matrix product of spaces is that every floor has at least one apartment since we have at least one copy of $\Gr_i$ for all $1\leq i \leq n$.
          
 \end{rem}

 We will make use of the building description of Remark \ref{Redrem} in order to prove the following.
 \begin{prop}\label{P01} After eliminating redundancy there are a total of $l(w)+n-1$ copies of Grassmannians $\Gr_i$ with $1\leq i\leq n-1$ left in $\Fl^w$.
 \end{prop}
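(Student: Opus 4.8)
The plan is to translate the redundancy count into a direct enumeration of ``corners'' of the rank matrix $D^w=(d_{p,q})$, and then to read off $l(w)+n$ corners by a change of variables.

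\emph{Step 1: identify the non-redundant positions.} Along any row the entries $d_{p,q}=\#\{i\le p:w(i)\le q\}$ are non-decreasing, so the positions carrying a fixed value $i$ in row $p$ form one contiguous block; the same holds in each column. Hence the leftmost copy of a given $\Gr_i$ in a row is the position where the value has just increased, and likewise for the topmost copy in a column. With the convention $d_{0,q}=d_{p,0}=0$, the elimination rule recalled before Remark~\ref{Redrem} (keep the first copy from left to right in each row and from top to bottom in each column) therefore retains the position $(p,q)$ precisely when
\[
d_{p,q}>d_{p,q-1}\qquad\text{and}\qquad d_{p,q}>d_{p-1,q}.
\]
I will call such a $(p,q)$ a \emph{corner}; these are exactly the non-redundant copies of the Grassmannians recorded in items (1)--(2) of Remark~\ref{Redrem}. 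A corner automatically has $d_{p,q}\ge 1$, and since $d_{p,q}\le\min(p,q)$ the value $n$ occurs only at $(n,n)$, which is itself a corner (its left and upper neighbours have value $n-1$). So among the corners exactly one, namely $(n,n)$, is a point-space, and the quantity to be computed is (number of corners) $-\,1$. It thus suffices to show there are $l(w)+n$ corners.

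\emph{Step 2: rewrite the corner condition.} The key identities are $d_{p,q}-d_{p-1,q}=1$ if $w(p)\le q$ and $0$ otherwise, and $d_{p,q}-d_{p,q-1}=\#\{i\le p:w(i)=q\}$, which is $1$ if $w^{-1}(q)\le p$ and $0$ otherwise. So $(p,q)$ is a corner iff $w(p)\le q$ and $w^{-1}(q)\le p$. Putting $b:=w^{-1}(q)$, so $q=w(b)$ and $b$ ranges bijectively over $\{1,\dots,n\}$, the corner condition becomes $b\le p$ and $w(p)\le w(b)$; thus corners are in bijection with
\[
\{(p,b)\ :\ 1\le b\le p\le n,\ w(p)\le w(b)\}.
\]

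\emph{Step 3: count.} The diagonal $b=p$ contributes all $n$ pairs, and for $b<p$ injectivity of $w$ upgrades $w(p)\le w(b)$ to $w(p)<w(b)$, so those pairs $(p,b)$ with $b<p$ and $w(b)>w(p)$ are precisely the inversions of $w$, of which there are $l(w)$. Hence there are $l(w)+n$ corners, and removing the single point-space corner $(n,n)$ leaves $l(w)+n-1$ non-redundant copies of Grassmannians $\Gr_i$ with $1\le i\le n-1$, as claimed.

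The only step carrying any subtlety is Step~1 --- the assertion that the positions surviving the elimination are exactly the corners of the rank matrix. I would either derive it directly from the elimination rule together with the monotonicity of $d_{p,q}$ along rows and columns, as sketched above, or simply invoke items (1)--(2) of Remark~\ref{Redrem}; after that, Steps~2--3 are the elementary substitution $q=w(b)$ and the standard identification of inversions with $l(w)$.
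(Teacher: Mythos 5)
Your proof is correct, and it is genuinely different from the one in the paper. The paper's argument is an induction on $n$: it first establishes the dichotomy that floor $n-1$ of the building has one apartment iff $w(n)=n$ and two otherwise, then restricts $w$ to a bijection $\{1,\ldots,n-1\}\to\{1,\ldots,n\}\setminus\{w(n)\}$ and tracks how many new apartments the label $(n,w(n))$ spawns; the count $l(w)+n-1$ falls out of the recursion $l(w)=l(w')+(n-w(n))$. Your proof is non-inductive and bijective: after the (correct and carefully argued) observation that the surviving positions are exactly the ``corners'' $(p,q)$ where the rank function jumps both horizontally and vertically, the two jump conditions reduce to $w(p)\le q$ and $w^{-1}(q)\le p$, and the substitution $q=w(b)$ turns corners into pairs $(p,b)$ with $b\le p$ and $w(p)\le w(b)$ --- the diagonal contributes $n$ and the strict inequalities contribute exactly $l(w)$. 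What you gain is a closed-form, one-shot count that bypasses the building bookkeeping entirely and makes the appearance of $l(w)$ transparent (it is literally the set of inversions); what the paper's inductive route buys is compatibility with Remark~\ref{Redrem} and the later use of the restriction $w'$ in Remark~\ref{Bij}, Proposition~\ref{P12}, and Theorem~\ref{BBS}, where the same peel-off-the-last-row induction reappears. One small caution on Step~1: the paper's elimination rule is phrased as ``remove duplicates along each row, keep the leftmost; then the same for columns,'' and a priori the order of passes could matter; it is worth noting explicitly (as your monotonicity remark essentially does) that if $d_{p,q}>d_{p,q-1}$ and $d_{p-1,q}=d_{p,q}$ then $d_{p-1,q}>d_{p-1,q-1}$ as well, so the topmost same-valued entry in the column also survives the row pass and knocks out $(p,q)$ --- hence row-then-column, column-then-row, and the simultaneous criterion all produce exactly the corners.
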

 \begin{proof} First we prove a  dichotomy for which it seems easier to look at the matrix product of spaces. The dichotomy is the following: on the floor $n-1$ there is exactly one apartment if and only if $w(n)=n$, otherwise there are exactly two apartments. Recall that the apartments on floor $n-1$ correspond to the copies of $\Gr_{n-1}$. The copies of $\Gr_{n-1}$ can only appear on positions $(n-1,n-1)$, $(n,n-1)$ and $(n-1,n)$. But they do definitely appear on positions $(n,n-1)$ and $(n-1,n)$ since the row and column $n$ of the matrix of spaces are the same irrespective of $w$. If $\Gr_{n-1}$ appears on position $(n-1,n-1)$ then it renders the other two copies redundant. If it does not appear then the other two copies survive the elimination process. Since the last row is always 
 \[ \xymatrix{ \bP^1 & \Gr_2& \ldots & \Gr_{n-1} & \Gr_n}\]
we conclude that $\Gr_{n-1}$ appears on position $(n-1,n-1)$ if and only if the unique change between the last two rows appears in the last column. In other words if and only if $w(n)=n$.
 
 We use now induction on $n$ to prove our formula. First notice that $(n,w(n))$ always appears on floor $w(n)$. Moreover this label is the last label on the floor $w(n)$. Let $w':S_{n-1}\ra S_n\setminus \{w(n)\}$.
 
 If $w(n)=n$, then we already know from the first claim that there exists a unique apartment at level $n-1$. Moreover $l(w)=l(w')$ and $w'$ determines a building that ends at level $n-2$. Hence there exists only one extra apartment, apart from the ones determined by $w'$. By induction this gives a total of $l(w')+(n-2)+1=l(w)+n-1$ apartments.
 
 If $w(n)\neq n$ then we notice that $w'$ will still determine an expanded building  by using exactly the same algorithm as before as if $w'$ would belong to $S_{n-1}$ (see also Remark \ref{Bij}).  However, we keep the labels of $w'$. Now, either $w(n)=n-1$ (top floor) or the apartment $(n,w(n))$  will give rise to only one apartment on floor $w(n)+1$ which will have also have the last label of this floor as the first coordinate will be $n$.  This will happen via step iv of the algorithm, using the last apartment of the same floor $w(n)$ of the building determined by $w'$ (this exists!). And the process of creation of new apartments continues always creating an apartment with the last label at level $w(n)+2$, etc. Either way, $(n,w(n))$ will  gives rise to $(n-1)-(w(n)-1)=n-w(n)$  new apartments till level $n-1$, including itself. 
 
 There is one more apartment to be counted. Recall from the first paragraph that if $w(n)\neq n$ there are two apartments at level $n-1$. We have already counted $(n,n-1)$ because it was created by $(n,w(n))$ along the way. But we also have $(n-1,n)$ which did not appear in the building generated by $w'$ because there we stopped at level $n-2$. 
 
 Therefore by induction, the total number of apartments is
 \[ [l(w')+(n-2)]+(n-w(n))+1=l(w)+n-1.
 \]
   \end{proof}
   
    \begin{rem} \label{Bij} The set of bijections $\mathcal{B}_n^w$
 \[ \{1,\ldots,n-1\}\ra \{1,\ldots,n\}\setminus \{w(n)\} \]
 is a torsor for the group $S_{n-1}$, i.e. $S_{n-1}$ acts freely on the right and transitively on the set of these bijections.  From many points of view these bijections behave as permutations in $S_{n-1}$.  
  
  This is related to the fact that this torsor has a preferred point $\circ$ which is the unique increasing bijection. One can identify $\mathcal{B}_n^w$ with $S_{n-1}$ via the action on $\circ$. Via such an identification, any bijection in $\mathcal{B}_n^w$ gets a rank $(n-1)\times (n-1)$ matrix and correspondingly a matrix product of spaces. For example, for the restriction $w$ of $w$ to $\{1,\ldots,n-1\}$ the matrix product of spaces $\mathcal{P}^{w}$ is the same as the one obtained by erasing the last row and column $w(n)$ in $\mathcal{P}^w$.  \end{rem} 
  
   \begin{example} Take $\sigma:=(4\;8\;6\;2\;7\;3\;1\;5)$ (the same choice appears in \cite{Fu}). The directed graph of $\sigma$ written in "building" format with increasing entries on each floor  is the following:
 \[ \xymatrix@=0.3cm{ (1,4) \ar[d]\ar@/_{3.5pc}/[ddddr] \ar[dr] \ar[dddr]& (4,2)\ar[d]\ar@/^{1.5pc}/[ddd]\ar@/^{2.5pc}/[dddd] & (7,1)\ar@/^{3.5pc}/[ddddl] && \mbox{level 1}\\
   (2,8)& (3,6)\ar[dd] & (6,3)\ar@/^{1.5pc}/[dddl] && \mbox{level 2}\\
    &&&&\mbox{level 3}\\
    &(5,7)& && \mbox{level 4}\\
   & (8,5)& && \mbox{level 5}}
 \]
 The "expanded building with all open apartments is:
 \[\xymatrix@=0.01cm{ (1,4)&  & &(4,2)&& &(7,1)  \\
   &(2,8)& (3,6)& (4,4) & &(6,3)  &(7,2) \\
    &&(3,8) &(4,6)& &(6,4)&(7,3)\\
    &&&(4,8)&(5,7)&(6,6) &(7,4) \\
   && &&(5,8)&(6,7) &(7,6)&(8,5)\\
   &&&&&(6,8)&(7,7)&(8,6)\\
   &&&&&&(7,8)&(8,7)
    }
    \]
    
     There are $25$ Grassmannians in total. Since $l(\sigma)=18$, Proposition \ref{P01} is verified.
     \[(\bP^1)^3\times (\Gr_2)^5\times (\Gr_3)^4\times (\Gr_4)^4\times (\Gr_5)^4\times (\Gr_6)^3\times (\Gr_7)^2\]
     
 By comparison, in Definition \ref{Def22} the product of spaces induced by the rank matrix of $\sigma$ is:
 \[(\bP^1)^{18}\times (\Gr_2)^{10}\times (\Gr_3)^8\times (\Gr_4)^6\times (\Gr_5)^4\times (\Gr_6)^3\times (\Gr_7)^2.
 \]
 \end{example}
   
    \vspace{0.1cm}
   \begin{prop}\label{P11} The bioriented flag $\Fl^{w}$ is a compact, smooth complex manifold.
   \end{prop}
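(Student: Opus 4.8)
The plan is to exhibit $\Fl^w$ as an iterated tower of fibrations over a point, each fiber being a Grassmannian (more precisely, a Grassmann bundle). Compactness then follows because a tower of compact fibers over a point is compact, and smoothness follows because a Grassmann bundle over a smooth base is smooth. To organize the induction I would use the ``building'' of Remark \ref{Redrem} (equivalently, after eliminating redundancy, the non-redundant copies of the Grassmannians): process the apartments floor by floor, or in a linear extension of the partial order, peeling off one Grassmannian factor at a time.

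The key steps, in order, are as follows. First, reduce to the non-redundant model: by Remark \ref{Redrem} the projection of $\Fl^w$ onto the coordinates indexed by non-redundant copies of the Grassmannian is an isomorphism onto its image, so it suffices to prove the claim for that reduced bioriented flag (alternatively one can work directly with $\mathcal{P}^w$ and check that the repeated coordinates are genuinely determined, hence carry no extra topology). Second, set up the induction on the total number of Grassmannian factors. Pick a non-redundant $\Gr_i$ that is maximal in a chosen linear extension of the incidence partial order — concretely, take the last apartment on the top nonempty floor, using fact (5) and the dichotomy at level $n-1$ from the proof of Proposition \ref{P01}. Removing that coordinate leaves a bioriented flag $\Fl'$ of the same shape for a building with one fewer apartment; by the structural facts (1)--(4) of Remark \ref{Redrem}, the removed $\ell_{p,q}$ has at most two non-redundant immediate smaller neighbours, say $\ell'$ and $\ell''$ (copies of $\Gr_{i-1}$, or boundary spaces $\{0\}$), and the only incidence constraints involving the removed coordinate are $\ell'\subset \ell_{p,q}$ and $\ell''\subset \ell_{p,q}$ (there are no constraints forcing $\ell_{p,q}$ into a \emph{larger} space, precisely because it was chosen maximal). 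Third, identify the forgetful map $\Fl^w \to \Fl'$ as a fiber bundle: over a point of $\Fl'$, the fiber is $\{\ell\in\Gr_i(E)\mid \ell' + \ell'' \subset \ell\}$, i.e. the Grassmannian of $(i-d)$-planes in the $(n-d)$-dimensional quotient $E/(\ell'+\ell'')$, where $d=\dim(\ell'+\ell'')=\dim\ell'+\dim\ell''$ since $\ell'\cap\ell''=\{0\}$ whenever both are present (they sit in the northeast/southwest regions of each other, with $\ell'$ a $\Gr_{i-1}$ to the north and $\ell''$ a $\Gr_{i-1}$ to the west of a $\Gr_i$, and the relevant containments force $\ell'\cap\ell''\subseteq$ the north-west $\Gr_{i-1}$... here one must double-check the dimension count). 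Since $\ell'+\ell''$ varies in a vector bundle over $\Fl'$, this fiber bundle is locally trivial with smooth compact fiber $\Gr_{i-d}(\mathbb{C}^{\,n-d})$; hence $\Fl^w$ is smooth and compact if $\Fl'$ is. The base case — a single $\bP^1$ or a point — is immediate, completing the induction.

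The main obstacle I expect is Step 3: verifying that the forgetful map is genuinely a locally trivial fiber bundle rather than merely a surjection with Grassmannian fibers, and in particular that the dimension $d=\dim(\ell'+\ell'')$ is \emph{constant} over $\Fl'$. Constancy of $d$ is exactly what makes $E/(\ell'+\ell'')$ into a vector bundle of fixed rank, which is what local triviality of the Grassmann bundle requires; this is where the structural facts (1)--(4) of Remark \ref{Redrem} — that the two smaller neighbours are a $\Gr_{i-1}$ to the north and a $\Gr_{i-1}$ to the west whose common north-west neighbour is a $\Gr_{i-2}$ (or boundary), so that $\dim(\ell'\cap\ell'')=i-2$ and hence $\dim(\ell'+\ell'')=i$ is forced — must be invoked carefully. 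Once this rigidity is pinned down, everything else is the standard fact that a Grassmann bundle associated to a vector bundle over a smooth (resp. compact) base is smooth (resp. compact), applied $l(w)+n-1$ times.
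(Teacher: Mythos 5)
Your Step 3 fails, and the worry you raise there is not a loose end to be tightened but an actual obstruction to the whole plan. When you peel off a \emph{maximal} coordinate $\ell_{p,q}$ (no non-trivial larger neighbours, two non-trivial smaller neighbours $\ell',\ell''$), the putative fiber is $\{\ell\in\Gr_i(E)\mid \ell'+\ell''\subset\ell\}$, and the type of this set is governed by $\dim(\ell'+\ell'')$, which is \emph{not} constant over $\Fl'$. The incidence relations only give $\ell_{nw}\subset\ell'$ and $\ell_{nw}\subset\ell''$, hence $\ell_{nw}\subset\ell'\cap\ell''$; nothing forces equality. Concretely, take $n=3$ and $w$ the longest element, so $D^w=\left(\begin{smallmatrix}0&0&1\\0&1&2\\1&2&3\end{smallmatrix}\right)$ and the non-trivial coordinates are $\ell_{1,3},\ell_{2,2}\in\bP(E)$, $\ell_{2,3},\ell_{3,2}\in\Gr_2(E)$, $\ell_{3,1}\in\bP(E)$ with relations $\ell_{1,3},\ell_{2,2}\subset\ell_{2,3}$, $\ell_{2,2}\subset\ell_{3,2}$, $\ell_{3,1}\subset\ell_{3,2}$. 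Removing $\ell_{3,2}$ (the last apartment on the top floor), the two smaller neighbours are the lines $\ell_{2,2}$ and $\ell_{3,1}$, whose common north-west neighbour is $\ell_{2,1}=\{0\}$; yet $\ell_{2,2}$ and $\ell_{3,1}$ may perfectly well coincide, since the only constraints on them are $\ell_{2,2}\subset\ell_{2,3}$ and, with $\ell_{3,2}$ deleted, none at all on $\ell_{3,1}$. Over the locus $\ell_{2,2}\neq\ell_{3,1}$ the fiber is the single point $\ell_{2,2}+\ell_{3,1}$; over $\ell_{2,2}=\ell_{3,1}$ it is $\bP(E/\ell_{2,2})\cong\bP^1$. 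So the forgetful map is not a fiber bundle (it is a birational morphism with positive-dimensional fibers over a proper closed set), and smoothness of $\Fl'$ does not pass to $\Fl^w$ through it. Note also that your own dismissal is self-defeating: if $\dim(\ell'+\ell'')=i$ really were forced, the fiber would always be a point, i.e.\ the removed coordinate would be redundant, contradicting that it was a non-redundant $\Gr_i$.

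The paper sidesteps this by always peeling a coordinate that has exactly one smaller and one larger non-trivial neighbour. It first projects away the entire first row (induction on the number of rows), so that the fiber is the first row constrained by the fixed second row $F_*$; within that fiber it then removes $\ell_{1,m}$ (the top-right corner) whose only constraints are $\ell_{1,m-1}\subset\ell_{1,m}\subset F_m$. Since both $\dim\ell_{1,m-1}$ and $\dim F_m$ are fixed, the fiber $\Gr_{\alpha_m-\alpha_{m-1}}(F_m/\ell_{1,m-1})$ has constant type, and one gets a genuine tower of Grassmann bundles. If you want to keep your one-coordinate-at-a-time framework, replace ``maximal in the inclusion order'' by ``a corner of the remaining matrix'' (e.g.\ always remove the current top-right entry): such a coordinate is pinched between exactly one smaller and one larger neighbour, and the relative Grassmannian has constant dimension. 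As stated, though, the proposal has a gap that cannot be patched without changing the choice of which coordinate to remove.
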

   \begin{proof} The proof more generally applies to any bioriented flag which is a subspace of an $n\times m$ \emph{increasing} matrix product  of Grassmannians with left to right and up-down inclusion relations. By an increasing matrix product of Grassmannians we mean that the dimensions of the subspaces increase from left to right and  up down. The proof goes by induction on the number of rows. It is trivially true for $n=1$ since, in this case, the bioriented flag becomes a standard flag manifold. 
   
  For  the case $n=2$ project the bioriented flag onto the product of Grassmannians of the second row. The image is the flag manifold obtained by erasing the first row together with the vertical inclusion conditions. Fix a point $F_*: F_{1}\subset \ldots \subset F_{m}$  where $\dim F_i<\dim F_{i+1}$ in the target. Then the fiber\footnote{A non-empty fiber argument goes as follows. Start with any $\ell_1\in \Gr_{\alpha_1}(F_1)$, then choose any $\ell_2'=\ell_2/\ell_1\in \Gr_{\alpha_2-\alpha_1}(F_2/\ell_1)$ then choose any $\ell_3/\ell_2$ in $\Gr_{\alpha_3-\alpha_2}(F_3/\ell_2)$ etc. } over $F_*$ consists of finite sequences of subspaces
  \[ \ell_1\subset\ldots\subset \ell_m,\qquad \ell_i\subset F_i,\;\; \ell_i\in \Gr_{\alpha_i}.
  \]
  for some fixed $\alpha_i,$ $1\leq i\leq m$. This fiber is a manifold whose biholomorphism type does not depend on the point $F_*$ and the proof of this fact proceeds by induction on the number of columns. Clearly true for $m=1$. In general, project onto the product $\prod_{1\leq i\leq m-1}\Gr_{\alpha_i}(F_i)$. By induction hypothesis the image is a manifold. The fiber consists of $\ell_m\in \Gr_{\alpha_m}(F_m)$ such that $\ell_m\supset \ell_{m-1}$. But this is $\Gr_{\alpha_m-\alpha_{m-1}}(F_m/{\ell_{m-1}})$. Hence when $n=2$ we get a fibration over a flag manifold with the fiber itself being  a tower of fiber bundles with Grassmannian fibers.
     
 For general $n$ consider the projection to the product of spaces of the $(n-1)\times m$ submatrix  induced by the last $n-1$  rows.  The image will be a bioriented flag inside a product of $(n-1)\times m$ spaces obtained by erasing the first row and the corresponding vertical inclusions conditions. By induction this is a manifold. The fiber of this projection is uniquely determined by fixing the second row of spaces. Suppose this second row is $F_{1}\subset \ldots\subset F_m$. Then the fiber consists of subspaces
 \[\ell_1\subset\ldots\subset \ell_m,\qquad \ell_i\subset F_i,\;\; \ell_i\in \Gr_{\alpha_i}
 \]
 where $\alpha_i$ are fixed. Hence the fiber in the general case is no different from the fiber for $n=2$.
 
   Hence one gets that $\Fl^{w}$ is a tower of fiber bundles and in fact since $\alpha_m-\alpha_{m-1}\leq 1$ the fibers are projective spaces.
      \end{proof}
 
We fix now the coordinates on the last line. Let
\[ \hat{S}^w:=\{\ell_{*,*}\in\Fl^{w}~|~\ell_{n,i}=F_{i}\}.
\]
\begin{prop} \label{P12} The space $\hat{S}^w$ is a smooth manifold of dimension $l(w)$, the length of $w$, i.e. the number of inversions of $w$.
\end{prop}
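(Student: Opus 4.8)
The plan is to realise $\hat{S}^w$ as a single fibre of the projection of $\Fl^w$ onto its last row of Grassmannians, and then to extract both its smoothness and its dimension from the fibration tower already constructed in the proof of Proposition~\ref{P11}.

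First I would observe that the last row of $\mathcal{P}^w$ is $\prod_{q=1}^n \Gr_{D^w_{n,q}}(E)=\prod_{q=1}^n \Gr_q(E)$, so that the image of $\Fl^w$ under $\ell_{*,*}\mapsto(\ell_{n,1},\dots,\ell_{n,n})$ is exactly $\Fl_{1,\dots,n}(E)$ and $\hat{S}^w$ is the fibre over $F_*$. Next I would recall that Proposition~\ref{P11} is proved by iterating ``forget the top row'': the successive projections
\[
\Fl^w=\Fl^{w,\{1,\dots,n\}}\longrightarrow\Fl^{w,\{2,\dots,n\}}\longrightarrow\cdots\longrightarrow\Fl^{w,\{n\}}=\Fl_{1,\dots,n}(E),
\]
where $\Fl^{w,\{k,\dots,n\}}$ denotes the bioriented flag on rows $k,\dots,n$ (still an increasing matrix product of Grassmannians by monotonicity of $D^w$, so Proposition~\ref{P11} applies at every stage), are locally trivial holomorphic fibre bundles whose fibres are towers of projective-space bundles and whose biholomorphism type over a point depends only on the top row of the base. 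Restricting these bundles successively over the preimages of $F_*$ then shows that $\hat{S}^w$ is again a compact complex manifold, assembled from the point $\{F_*\}$ by a sequence of $n-1$ projective-bundle towers; it is nonempty, e.g.\ $(F^w_p\cap F_q)_{p,q}\in\hat{S}^w$.

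For the dimension I would compute the fibre $Y_k$ of the $k$-th map (which forgets row $k$ and constrains it by row $k+1$) exactly as in Proposition~\ref{P11}, building the chain $\ell_{k,1}\subset\cdots\subset\ell_{k,n}$ column by column inside the chain of row $k+1$, obtaining
\[
\dim Y_k=\sum_{q=1}^n\bigl(D^w_{k,q}-D^w_{k,q-1}\bigr)\bigl(D^w_{k+1,q}-D^w_{k,q}\bigr).
\]
Since $D^w_{k,q}-D^w_{k,q-1}=[\,w^{-1}(q)\le k\,]$ and $D^w_{k+1,q}-D^w_{k,q}=[\,w(k+1)\le q\,]$, the substitution $q=w(i)$ turns this sum into $\#\{i\le k:\ w(i)\ge w(k+1)\}=\#\{i\le k:\ w(i)>w(k+1)\}$, the last equality because $w$ is injective. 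Summing the fibre dimensions,
\[
\dim\hat{S}^w=\sum_{k=1}^{n-1}\#\{i\le k:\ w(i)>w(k+1)\}=\#\{(i,j):\ i<j,\ w(i)>w(j)\}=l(w).
\]

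The one point that needs real care is the passage from ``the projection to the last row is a surjective submersion'' to ``its fibre is a smooth manifold of the predicted dimension'': this is precisely where one must invoke the stronger conclusion of Proposition~\ref{P11}, namely that the fibre type is \emph{constant} along the base, not merely smoothness of the total space. Granting that, everything else is bookkeeping together with the elementary inversion-counting identity above, so I do not expect a genuine obstacle here.
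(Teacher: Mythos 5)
Your argument is correct, and the framework is the same as the paper's: in both cases $\hat{S}^w$ is realised as a tower of holomorphic fibre bundles obtained by projecting away one row at a time (starting from the fixed bottom row), and its dimension is the sum of the fibre dimensions. Where you differ is in how the fibre dimensions are computed. The paper identifies the set of admissible row-$(n-1)$ flags (given $\ell_{n,*}=F_*$) with the Kempf--Laksov resolution of a Grassmannian Schubert variety in $\Gr_{n-w(n)}(E/F_{w(n)-1})$, reads off the dimension $n-w(n)$ from that structural description, and then iterates via the restriction $w'\colon\{1,\ldots,n-1\}\to\{1,\ldots,n\}\setminus\{w(n)\}$. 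You instead write the fibre dimension at step $k$ as a closed sum
\[
\dim Y_k=\sum_{q=1}^n\bigl(D^w_{k,q}-D^w_{k,q-1}\bigr)\bigl(D^w_{k+1,q}-D^w_{k,q}\bigr),
\]
convert the two differences into indicator functions, and observe that the double sum is exactly the inversion count of $w$. This is a clean shortcut: it avoids the Kempf--Laksov identification entirely and reduces the dimension bookkeeping to an elementary reindexing. The paper's more geometric route, by contrast, is not wasted effort, because the Kempf--Laksov structure of these fibres is precisely what is reused later in Proposition~\ref{P234} and Theorem~\ref{BBS} to compare $\hat{S}^w$ with the bubblesort Bott--Samelson resolution; your version proves the present proposition more economically but does not yield that structural byproduct. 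One small point to flag: when you assert local triviality of the projections so that restriction over the preimage of $F_*$ stays a manifold, you are relying on the ``tower of fibre bundles'' conclusion of Proposition~\ref{P11}, which the paper does state, so this is fine, but it is the right thing to have called out explicitly as the nontrivial hypothesis.
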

\begin{proof} Just like Proposition \ref{P11}, the proof that $\hat{S}^w$ is a manifold applies more generally to bioriented flags with \emph{fixed last row} in any $n\times m$ increasing matrix product of Grassmannians and proceeds by induction starting with the case $n=2$ already treated in Proposition \ref{P11}. The difference in proof is that the induction step uses the projection onto the line $n-1$. The fiber this time is by induction hypothesis a manifold, while the image is again a manifold as it leads again to the case $n=2$. We do the details for $\hat{S}^w$ in order to justify the claim about $\dim{\hat{S}^w}$.

 It is convenient to think alternatively $\hat{S}^w$ as a subspace of an $(n+1)\times (n+1)$ product of spaces rather than just of $\mathcal{P}$ by introducing a line and a column of point spaces and hence introducing the coordinates $\ell_{0,i}=pt$ and $\ell_{i,0}=pt$, for all $i\in\{0,\ldots n\}$. 

Let $\ell_{*,*}\in \hat{S}^w$ and look at $\ell_{n-1,*}$. This is an element of a product 
\[ \prod_{i=0}^{n-1} \Gr_{a_i}(E)
\]
where $a_i\in\{0,1,\ldots n-1\}$ is an increasing sequence of numbers such that $a_i-a_{i-1}\in \{0,1\}$ with $a_0:=0$. In fact, there exists exactly one $i$, namely $i=w(n)$ such that
\[a_i-a_{i-1}=0
\]
and this is due to the fact that the function in (\ref{eq4}) has exactly one jump point. Now, because of the vertical $\subset$ relations and because the last line of $\ell_{n,*}=F_*\in \hat{S}^w$ is fixed  we get that $\ell_{n-1,j}=F_j$ for all $j\leq w(n)-1$. 

We also have that $\ell_{n-1,w(n)}=\ell_{n-1,w(n)-1}=\ell_{n,w(n)-1}=F_{w(n)-1}$, the first equality holding because $\Gr_{a_{w(n)}}=\Gr_{a_{w(n)-1}}$.  

The only freedom appears in the choice of the coordinates $\ell_{n-1,j}$ for $j\geq w(n)+1$. But, in this case  we can factor out $\ell_{n-1,w(n)}=F_{w(n)-1}$ from all $\ell_{n-1,j}$ and from all $\ell_{n,j}$ for $j\geq w(n)+1$. Then
\begin{equation}\label{eqidn} \frac{\ell_{n-1,w(n)+1}}{\ell_{n-1,w(n)}}\subset\ldots \subset \frac{\ell_{n-1,n}}{\ell_{n-1,w(n)}}
\end{equation}
is a flag of subspaces inside $E/F_{w(n)-1}$ of dimensions $1,2,\ldots n-w(n)$ such that
\begin{equation} \label{eqidn1}  \frac{\ell_{n-1,w(n)+i}}{\ell_{n-1,w(n)}} \subset \frac{F_{w(n)+i}}{F_{w(n)-1}}=\frac{\ell_{n,w(n)+i}}{F_{w(n)-1}},\qquad\forall i\geq 1\end{equation} The   spaces on the right hand side have  dimensions $i+1$, with $i=1,\ldots, n-w(n)$. In other words the set of flags of type (\ref{eqidn}) satisfying (\ref{eqidn1}) represents the Kempf-Laksov resolution (see (\ref{KL1})) of the Grassmannian Schubert variety inside $\Gr_{n-w(n)}(E/F_{w(n)-1})$  defined by the multi-index $\beta_i=i+1,\; 1\leq i\leq n-w(n)$ and fixed flag $\left(\frac{F_{w(n)+i}}{F_{w(n)-1}}\right)_{1\leq i\leq n-w(n)}$. The dimension of such a resolution is the dimension of the Schubert cell, i.e.
\[\sum_{j=1}^{n-w(n)} (\beta_j-j)=n-w(n)
\] 
Now $n-w(n)=\#\{1\leq k\leq n~|~w(k)>w(n)\}$ is the number of inversions induced by $w(n)$. 

Moreover the image of the projection of $\hat{S}^w$ onto the line $n-1$ of $\hat{S}^w$ is a smooth manifold  of dimension $n-w(n)$. 

One proceeds by induction as explained at the beginning. To understand what happens next, fix now the line $n-1$. Notice that we can first eliminate from discussion the column $w(n)$ of spaces   once and for all since the Grassmannians that appear in $\mathcal{P}^w$ in this column above row $n$ are repetitions of the Grassmannians that appear in column $w(n)-1$ hence the horizontal $\subset$ relations imply their redundancy. Therefore one gets a bioriented flag of spaces in a product of $(n-1)\times (n-1)$ spaces with a fixed last line (the line $n-1$ in the original matrix). Project onto the line $n-2$ in order to get another smooth manifold with dimension equal to the number of inversions determined by $w(n-1)$ by regarding now (via restriction)  $w$ as a bijection $\{1,\ldots, n-1\}\ra \{1,\ldots,n\}\setminus \{w(n)\}$. 

One gets a tower of fiber bundles with fibers given by Kempf-Laksov resolutions of Grassmannian Schubert varieties. The total dimension is the number of inversions of the permutation $w$. 
\end{proof}
\begin{rem} Let $m:=n-w(n)$. Then $\dim (E/F_{w(n)-1})=m+1$ and the Schubert variety in $\Gr_{m}(E/F_{w(n)-1})$ associated to the index sequence $\beta_i:=i+1$, $i=1,\ldots, m$ and fixed flag
 \[ F^{\beta_i}:=\frac{F_{w(n)+i}}{F_{w(n)-1}}, \qquad 1\leq i\leq m
 \]
 which appears in the proof is in fact the full Grassmannian $\Gr_{m}(E/F_{w(n)-1})$ since all conditions
 \[\dim L\cap F^{\beta_i}=i
 \]
 are open conditions. 

 The Kempf-Laksov resolutions of the Grassmannian of hyperplanes $\Gr_{m}(E/F_{w(n)-1})$ seen as a Schubert  variety as described in the previous paragraph is a tower of fiber bundles with $\bP^1$-fiber.

We conclude  that $\hat{S}^w$ is a tower of fiber bundles with $\bP^1$ fibers.
\end{rem}
\begin{rem} There exists some extra redundancy in the definition of $\hat{S}^w$, other than the one already present in  $\Fl^w$. First, by fixing the last row we have that $\hat{S}^w$ is a subset of the $(n-1)\times n$ submatrix product of Grassmannian spaces which are used in the definition of $\Fl^w$. Second, since the first $w(n)$ components of the row $n-1$ are also fixed one can safely eliminate the Grassmannians they belong to. 

In  the "building"  description given in Remark \ref{Redrem}, we therefore need to "close" every apartment with the last label on each floor.  By Proposition \ref{P01}, we are thus left, with a total of $l(w)$ Grassmannian spaces. This is the same number of spaces that appears in the description of the Bott-Samelson resolution as the closure of an orbit as constructed by Magyar \cite{Ma1}, (see also \cite{JW}). It is no coincidence. We will see in Theorem  \ref{BBS} that $\hat{S}^w$ gives a resolution that coincides with the "bubblesort" Bott-Samelson resolution.

For  $\sigma:=(4\;8\;6\;2\;7\;3\;1\;5)$ the  spaces that remain after eliminating redundancy are obtained from the next diagram and written in the last column:
\[\xymatrix@=0.02cm{ (1,4)&  & &(4,2)&&  &&&& (\bP^1)^2\times \\
   &(2,8)& (3,6)& (4,4) & &(6,3)  & & &&(\Gr_2)^4\times\\
    &&(3,8) &(4,6)& &(6,4)& &&& (\Gr_3)^3\times\\
    &&&(4,8)&(5,7)&(6,6) & &&& (\Gr_4)^3\times\\
   && &&(5,8)&(6,7) &(7,6) & &&(\Gr_5)^3\times\\
   &&&&&(6,8)&(7,7) &&& (\Gr_6)^2\times\\
   &&&&&&(7,8)&&& \Gr_7.
    }
\]
\end{rem}
\begin{theorem}\label{Flres1} The projection $\pi:\hat{S}^w\ra \prod_{i=1}^n \Gr_i$ 
\[  \ell_{*,*}\ra \ell_{*,n}
\] 
is a resolution of $S^w\subset \Fl_n$.
\end{theorem}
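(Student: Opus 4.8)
The plan is to verify that $\pi$ satisfies the notion of a (direct) resolution in Definition~\ref{defo1}, taking as given from Propositions~\ref{P11} and~\ref{P12} that $\hat{S}^w$ is a nonsingular, compact, connected (being an iterated $\bP^1$-bundle over a point, hence irreducible) complex manifold of dimension $l(w)$. The map $\pi$ is holomorphic, being the restriction of the projection $\mathcal{P}^w\ra\prod_{i=1}^n\Gr_i(E)$ onto the last column, and it is proper because $\hat{S}^w$ is compact; so the two things that actually require an argument are that the image of $\pi$ equals the Schubert variety $\overline{S^w}$ and that $\pi$ is birational onto it. For the inclusion $\pi(\hat{S}^w)\subset\overline{S^w}$, take $\ell_{*,*}\in\hat{S}^w$: the last column satisfies $\dim\ell_{p,n}=D^w_{p,n}=p$ and $\ell_{p,n}\subset\ell_{p+1,n}$, so $\ell_{*,n}$ is a complete flag; the horizontal and vertical incidences give $\ell_{p,q}\subset\ell_{p,n}$ and $\ell_{p,q}\subset\ell_{n,q}=F_q$, hence $\ell_{p,q}\subset\ell_{p,n}\cap F_q$ and therefore $\dim(\ell_{p,n}\cap F_q)\ge\dim\ell_{p,q}=D^w(p,q)$, which are precisely the inequalities cutting out $\overline{S^w}$.

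Next I would construct an explicit inverse of $\pi$ over the open cell $S^w$. Define $\sigma\colon S^w\ra\mathcal{P}^w$ by $\sigma(\ell_*):=(\ell_p\cap F_q)_{p,q}$. If $\ell_*\in S^w$, then by the defining equalities~(\ref{Sw1}) of the cell one has $\dim(\ell_p\cap F_q)=D^w(p,q)$ for all $p,q$, the inclusions $\ell_p\cap F_q\subset\ell_p\cap F_{q+1}$ and $\ell_p\cap F_q\subset\ell_{p+1}\cap F_q$ are immediate, and $\ell_n\cap F_q=F_q$; hence $\sigma(\ell_*)\in\hat{S}^w$ and $\pi(\sigma(\ell_*))=\ell_*$. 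Moreover $\sigma$ is holomorphic: on $S^w$ the dimension $\dim(\ell_p\cap F_q)$ is constant, so the composite $\ell_p\hookrightarrow\underline{E}\twoheadrightarrow\underline{E}/F_q$ of the tautological bundle with the projection has constant rank and its kernel $\ell_p\cap F_q$ is a holomorphic subbundle.

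Conversely, if $\ell_{*,*}\in\pi^{-1}(S^w)$ with $\ell_{*,n}=\ell_*$, then exactly as above $\ell_{p,q}\subset\ell_p\cap F_q$, and the two spaces have the same dimension $D^w(p,q)$, so they coincide; thus $\sigma$ is a two-sided holomorphic inverse of $\pi|_{\pi^{-1}(S^w)}$, and $\pi$ restricts to a biholomorphism $\pi^{-1}(S^w)\xrightarrow{\sim}S^w$. Since $\pi^{-1}(S^w)$ is a nonempty open, hence dense, subset of the irreducible manifold $\hat{S}^w$ and $S^w$ is dense in $\overline{S^w}$, the map $\pi$ is birational. Finally, properness forces $\pi(\hat{S}^w)$ to be closed; it contains $S^w$ and is contained in $\overline{S^w}$, so it equals $\overline{S^w}$. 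Hence $\pi\colon\hat{S}^w\ra\overline{S^w}$ is a proper, analytic, birational map from a nonsingular complex manifold, i.e. a resolution of $S^w$ in the sense of Definition~\ref{defo1}.

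I expect no serious obstacle beyond what is already packaged in Propositions~\ref{P11} and~\ref{P12}: once $\hat{S}^w$ is known to be smooth of dimension $l(w)$, the argument above is a direct verification. The one point that deserves genuine care is the holomorphy of the inverse $\sigma$ over the cell --- that on $S^w$ the assignment $\ell_*\mapsto\ell_p\cap F_q$ is a morphism and not merely a set map --- which is where the constant-rank / subbundle argument enters; everything else (properness from compactness, the dimension inequalities, density of the cell) is routine.
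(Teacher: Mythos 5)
Your proof is correct and follows essentially the same route as the paper: use the incidence relations to show the image lands in $\overline{S^w}$ via the dimension inequalities, invert $\pi$ over the open cell by $\ell_*\mapsto(\ell_p\cap F_q)_{p,q}$, characterize the fiber over $S^w$ by the equality case, and conclude surjectivity from compactness/properness. The only difference is one of thoroughness: you spell out the holomorphy of the inverse via the constant-rank subbundle argument, a point the paper leaves implicit when asserting the restriction of $\pi$ to $\pi^{-1}(S^w)$ is a biholomorphism.
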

\begin{proof} Let  $\ell_p:=\ell_{p,n}$ where $\ell_{*,*}\in \hat{S}^w$. Then due to the incidence relations both $\ell_p$ and $F_q$ will contain $\ell_{p,q}$ for all $q$. The dimension of $\ell_{p,q}$ is the number of rows $i\leq p$ such that $w(i)\leq q$. Hence
\begin{equation}\label{eq3456}\dim{\ell_p\cap F_q}\geq \dim \ell_{p,q}= D^w_{pq},\qquad \forall p,q.
\end{equation}
with equality if and only if $\ell_p\cap F_q=\ell_{p,q}$. 

In other words, the projection lands within the Schubert variety $\overline{S^w}$. The image clearly contains $S^w$ by taking $\ell_{p,q}:=\ell_p\cap F_q$ where $\ell_{*}\in S^w$ is given. By the characterization of the equality case in (\ref{eq3456}) the projection $\pi$ restricted to $\pi^{-1}(S^w)$ is a biholomorphism onto $S^w$.

Moreover since the projection map goes between smooth algebraic varieties and the projection map is algebraic and $\hat{S}^w$ is compact,  it means that the whole $\overline{S^w}$ will be in the image. 
\end{proof}

We will show next  that the resolution $\hat{S}^w$ is isomorphic to a particular Bott-Samelson resolution of $S^w$. We use Magyar's construction of Bott-Samelson resolutions as presented in Section 5 of \cite{JW}. For every presentation 
\[ w=w_1\ldots w_{l(w)}\]
 as a reduced word, i.e. as a minimal product of adjacent transpositions\footnote{These are $s_i\in S_n$ swapping $i$ and $i+1$.} one can construct a Bott-Samelson resolution as follows:
 \begin{itemize}
 \item[(i)] identifying $w_j$ with $s_{d_j}$ for some $1\leq d_j\leq n-1$;
 \item[(ii)] considering the product of Grassmannians $G:=\displaystyle\prod_{i=1}^{l(w)}\Gr_{d_i}$ in the ambient space; denote by $[V_1,\ldots ,V_{l(w)}]$ an element of this space;
 \item[(iii)] imposing $l(w)$ incidence relations of type:
 \begin{equation}\label{increl2}     V_{l(j)}\subset V_j\subset V_{r(j)},\qquad 1\leq j\leq l(w)
 \end{equation}
 where ${l(j)}$ and ${r(j)}$ are either indices between $1$ and $l(w)$ defined at item (iv) or are undefined in which case $V_{l(j)}:=F_{d_j-1}$ and $V_{r(j)}:=F_{d_j+1}$ respectively.
 \item[(iv)] $l(j)$ is the greatest index such that $l(j)<j$ and $w_{l(j)}=s_{d_j-1}$ or if no such index exists then $l(j)$ is undefined; similarly,
 $r(j)$ is the greatest index such that $l(j)<j$ and $w_{l(j)}=s_{d_j+1}$ or if no such index exists then $r(j)$ is undefined.
 \end{itemize}

 The presentation of $w$ as a reduced word we will use is the one called the "bubblesort" which we describe next. Notice that the effect of multiplying any $\sigma\in S_n$ to the right with a transposition $s_i$ gives a permutation where $i\ra\sigma(i+1)$ and $i+1\ra \sigma(i)$ while the rest is unchanged. Fix $w\in S_n$. If $w(n)\neq n$, then the composition of transpositions
 \begin{equation}\label{eqh1} s_{w(n)}s_{w(n)+1}\ldots s_{n-1}=e\cdot s_{w(n)}s_{w(n)+1}\ldots s_{n-1}
 \end{equation}
 seen as acting to the right on the identity $e$ has the effect of moving $w(n)$ to position $n$. If $w(n)=n$ then nothing happens and we are effectively reduced to the case $w\in S_{n-1}$.
 
 One can then use the adjacent transpositions of $S_{n-1}$ (seen as transpositions in $S_n$) in order to move $w(n-1)$ to position $n-1$, always multiplying on the right.  Hence one would multiply (\ref{eqh1})  with
 \[ s_{w(n-1)}\ldots s_{n-2} \quad\mbox{or}\quad s_{w(n-1)-1}\ldots s_{n-2}
 \]
 depending on whether $w(n-1)$ stayed fixed after the first step (\ref{eqh1}), i.e. $w(n-1)<w(n)$ or moved one position to the left, i.e. $w(n-1)>w(n)$. The role of the identity permutation $e$ for the initial step is now taken by the restriction of $w$ as a bijection $\{1,\ldots,n-1\}\ra \{1,\ldots,n\}\setminus \{w(n)\}$ (see also Remark \ref{Bij}).

 Continuing in this manner one gets to write $w$ as the bubblesort reduced word of $l(w)$-transpositions. Denote by $\BBS^w$ the (bubblesort) Bott-Samelson resolution induced by this presentation of $w$. The projection map between $\BBS^w$ and $\Fl_{1,2,\ldots,n}(E)$ that gives the resolution of $S^w$ is defined by (compare \cite{JW}):
 \begin{equation}\label{resB}[V_1,\ldots,V_{l(w)}]\ra ( V_{p(1)},\ldots,V_{p(i)},\ldots,E)
 \end{equation}
 where  $p(i)$ for $i\leq n-1$ is the \emph{total  number} of transpositions needed to bring $w(n)$ to the last spot, $\ldots$,$w(i+1)$ to spot $i+1$. In other words:
 \[p(i)=\sum_{j=i+1}^n\#\{1\leq k\leq n~|~w(k)>w(j)\}
 \]
 and this is also the index of last occurence of $s_i$ in the bubblesort reduced word for $w$.

We now write 
\[ w:=t_1\ldots t_{n-1}
\]
where $t_1:=s_{w(n)}s_{w(n)+1}\ldots s_{n-1}$ is a product of transpositions in $S_n$ that brings $w(n)$ to position $n$, $t_2$ is a product of transpositions in $S_{n-1}$ acting on $t_1$ to the right that brings $w(n-1)$ on position $n-1$, etc. Let
\[ G^{t_1}:=\Gr_{w(n)}\times \ldots\times \Gr_{n-1},
\]

In what follows next we will assume that $w(n)\neq n$ so that $G^{t_1}$ is not trivial.

Notice that $G^{t_1}$ is the subproduct of the first $n-w(n)$ spaces of $G$, the product of Grassmannians where  $\BBS^w$ lives. Define $G^{t_2}$,\ldots $G^{t_{n-1}}$ in an analogous manner such that $G=G^{t_1}\times \ldots\times G^{t_{n-1}}$.  There exists a (restriction of the) projection onto the first $n-w(n)$ coordinates:
\[\pi_1:\BBS^w\ra G^{t_1}.
\]

The product $w':=t_2\ldots t_{n-1}$ is a permutation in $S_{n-1}$ and is already given as a product of transpositions by replacing $t_2,\ldots t_{n-1}$.  Therefore there exists a Schubert variety $S^{w'}$ and a corresponding $\BBS^{w'}$ resolution. 

\begin{prop}\label{P234} \begin{itemize}
\item[(i)] Let $m:=n-w(n)$. The image of $\pi_1$ is biholomorphic with the Kempf-Laksov resolution of the Grassmannian $\Gr_{m}(\bC^{m+1})$ seen as a Grassmannian Schubert variety for the multi-index  $\beta_i=i+1$, $i=1,\ldots,m$ and fixed flag $\bC^{\beta_*}$.
\item[(ii)] Every fiber of $\pi_1$ is biholomorphic with $\BBS^{w'}$.
\end{itemize}
\end{prop}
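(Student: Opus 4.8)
The plan is to compute the Magyar incidence relations of the bubblesort word explicitly; both claims then drop out, the only real work being the combinatorial bookkeeping of the pointers $l(\cdot),r(\cdot)$.

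For (i), write the bubblesort word as a concatenation of blocks $B_1\cdots B_{n-1}$, where $B_1=(s_{w(n)},s_{w(n)+1},\dots,s_{n-1})$ occupies positions $1,\dots,m$ with $m=n-w(n)$, so $d_i=w(n)+i-1$ on this block. I would check that for $1\le i\le m$ one has $l(i)=i-1$ (with the convention $V_{l(1)}=F_{w(n)-1}$), while $r(i)$ is always undefined, so that $V_{r(i)}=F_{w(n)+i}$; hence the incidence relations on the first $m$ factors read exactly
\[ F_{w(n)-1}\subset V_1\subset\cdots\subset V_m,\qquad \dim V_i=w(n)+i-1,\qquad V_i\subset F_{w(n)+i}\ \ (1\le i\le m).\]
That the image of $\pi_1$ is precisely this set follows because the relations attached to positions $>m$ constrain only the coordinates of index $>m$, together with the standard fact (cf.\ \cite{JW}) that $\BBS^w$ is an iterated $\bP^1$-bundle, so any tuple satisfying the display extends to a point of $\BBS^w$ by choosing $V_j$ ($j>m$) successively in the nonempty projective lines $\bP(V_{r(j)}/V_{l(j)})$. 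Finally I would quotient by $F_{w(n)-1}$: setting $\overline{V}_i:=V_i/F_{w(n)-1}\subset E/F_{w(n)-1}=:\bC^{m+1}$ and $\bC^{\beta_i}:=F_{w(n)+i}/F_{w(n)-1}$, so $\beta_i=i+1$, the displayed set becomes $\{(\overline{V}_1,\dots,\overline{V}_m)\in\prod_i\Gr_i(\bC^{\beta_i})\mid \overline{V}_1\subset\cdots\subset\overline{V}_m\}$, which is the Kempf--Laksov resolution (\ref{KL1}); the quotient map is a biholomorphism onto it, and, as noted in the Remark following Proposition \ref{P12}, the underlying Grassmannian Schubert variety is the whole $\Gr_m(\bC^{m+1})$.

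For (ii), fix a point $(V_1,\dots,V_m)$ in the image of $\pi_1$. The key observation is that the two chains glue: $F_1\subset\cdots\subset F_{w(n)-1}\subset V_1\subset\cdots\subset V_m$ is a \emph{complete} flag $F_\bullet$ in the hyperplane $V_m\cong\bC^{n-1}$, its $p$-dimensional member being $F_p$ for $p\le w(n)-1$ and $V_{p-w(n)+1}$ for $p\ge w(n)$. I would then argue that the fiber $\pi_1^{-1}(V_1,\dots,V_m)$, with its induced incidence relations on the factors indexed by the positions $m+1,\dots,l(w)$, is \emph{literally} the Bott--Samelson variety of the reduced word $t_2\cdots t_{n-1}$ of $w'\in S_{n-1}$ built on the ambient flag $F_\bullet$ of $V_m$. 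The verification is pointwise in $j>m$: if $s_{d_j-1}$ occurs among positions $m+1,\dots,j-1$ then $l(j)=l'(j)$ and it is the same internal coordinate slot for both varieties; otherwise $l'(j)$ is undefined, so $\BBS^{w'}$ imposes the $(d_j-1)$-dimensional step of its ambient flag, whereas in $\BBS^w$ either $s_{d_j-1}$ occurs in $B_1$ — forcing $d_j-1\ge w(n)$ and $V_{l(j)}=V_{d_j-w(n)}$ — or $l(j)$ is undefined, so $d_j-1\le w(n)-1$ and $V_{l(j)}=F_{d_j-1}$; in both sub-cases $V_{l(j)}$ is exactly the $(d_j-1)$-dimensional member of $F_\bullet$. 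The same analysis, now invoking $d_j\le n-2$ for $j>m$, handles the upper pointers $r(j)$. Since $GL(V_m)$ acts transitively on the complete flags of $V_m$ and compatibly on Bott--Samelson varieties, $\pi_1^{-1}(V_1,\dots,V_m)\cong\BBS^{w'}$, and this biholomorphism type does not depend on the chosen base point.

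I expect the crux to be the pointer comparison in the previous paragraph: one has to keep precise track of where each $s_i$ sits in the bubblesort word — in particular of the fact that block $B_1$ contributes exactly $s_{w(n)},\dots,s_{n-1}$ in that order — so that the dichotomy $d_j-1\ge w(n)$ versus $d_j-1\le w(n)-1$ (and its analogue for $r$) matches the dichotomy in which the relevant member of $F_\bullet$ is one of the $V_i$ versus one of the $F_k$. The remaining ingredients — smoothness of $\BBS^w$, its iterated $\bP^1$-bundle structure, and transitivity of $GL$ on complete flags — are standard and may be quoted from \cite{JW}.
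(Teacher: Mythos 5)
Your proposal is correct and follows essentially the same route as the paper: for (i) you verify directly that the Magyar pointers on the first $m$ positions reproduce the Kempf--Laksov incidence relations (the paper states this as ``straightforward'' and you supply the details), and for (ii) you use exactly the paper's device of replacing the ambient flag by the completed flag $F_1\subset\cdots\subset F_{w(n)-1}\subset V_1\subset\cdots\subset V_m$ and running the trichotomy on $l(j)$, $r(j)$ for $j>m$. The only point the paper makes explicit that you leave implicit is the recursion along $r$-pointers showing every $V_j$ with $j>m$ lands inside $V_m$, but this is contained in your reformulation that the fiber's incidence data literally match those of $\BBS^{w'}$ built on the complete flag of $V_m$.
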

\begin{proof} The proof of (i) consists in simply verifying that the incidence relations (\ref{increl2}) for $1\leq j\leq n-w(n)$,  are equivalent with the incidence relations of the Kempf-Laksov resolution. This is straightforward.

For (ii) let $[W_1,\ldots, W_{n-w(n)}]\in \Gr_{w(n)}\times \ldots\times \Gr_{n-1}$ be a point in the image of $\pi_1$. Replace the original flag $F_*$ by the flag:
\[ F'_*:\qquad F_1\subset \ldots \subset F_{w(n)-1}\subset W_1\subset W_2\subset\ldots \subset W_{n-w(n)}
\]
Let $j\leq l(w)$ such that $j>n-w(n)$. Then there are three mutually exclusive possibilities
\begin{itemize}
\item[(i)] $l(j)$ is defined and  $l(j)> n-w(n)$; in this case the inclusion relation $V_{l(j)}\subset V_j$  is a relation in $G^{t_2}\times G^{t_3}\times \ldots \times G^{t_{n-1}}$ and there exist one  identical relation in $\BBS^{w'}$ for the flag $F'_*$;
\item[(ii)] $l(j)$ is defined and $l(j)\leq n-w(n)$; in this case the inclusion relation $V_{l(j)}\subset V_j$ becomes a relation of type $W_{l(j)}\subset V_j$; notice also that in this situation $l(j)< n-w(n)-1$ since the corresponding $d_j-1$ is strictly smaller than $n-2$ and the  $(n-w(n)-1)$-th index corresponds to $\Gr_{n-2}$; these are relations that involve $F'_{*}$; there exists an identical relation in $\BBS^{w'}$ for the flag $F'_*$;
\item[(iii)] $l(j)$ is undefined; in this case the relation $V_{l(j)}\subset V_j$ is  $F_{d_j-1}\subset V_j$ but then necessarily $d_j\leq w(n)$ since if $d_j>w(n)$ there will  exist an index $k$ in $\{w(n),\ldots, n-1\}$ such that $k=d_j-1$ which would say that $l(j)$ is actually defined; there exists an identical relation in $\BBS^{w'}$ for the flag $F'_*$.
\end{itemize}

Similarly, there are three mutually exclusive possibilities concerning the inclusion relations $ V_j\subset V_{r(j)}$. If $r(j)$ is not defined then $V_j\subset F_{d_j+1}$ and $d_j+1< w(n)$ (otherwise $r(j)$ is defined). Hence these relations involve $F'_{*}$ and there exists an identical relation for $\BBS^{w'}$.

We argue now that for all $j>n-w(n)$, the inclusion relations $V_j\subset V_{r(j)}$ collectively imply that $V_j\subset W_{n-w(n)}$ for all $j$. We already saw this for undefined $r(j)$. If $r(j)$ is defined but $r(j)\leq n-w(n)$ this is almost tautological since then $V_j\subset V_{r(j)}\subset W_{n-w(n)}$. Finally, if $r(j)>n-w(n)$ we proceed recurrently since $r(j)<j$ by definition hence we look at $r(r(j))$,  which will satisfy the same trichotomy. The process will have to produce an $r^{(\alpha)}(j)$ which is either undefined or $\leq n-w(n)$. We have $V_{j}\subset V_{r(j)}\subset V_{r(r(j))}\subset \ldots \subset W_{n-w(n)}$ etc.

 Hence all spaces are within $W_{n-w(n)}$.

We conclude that all inclusions $V_{l(j)}\subset V_j\subset V_{r(j)}$ for $j>n-w(n)$ for the original flag are equivalent with inclusions that involve the flag $F'_*$ which define a $\BBS^{w'}$ resolution.

\end{proof}

\begin{theorem} \label{BBS} There exists an isomorphism of resolutions $\hat{S}^w\ra \BBS^w$, i.e. a biholomorphism that commutes with the projections to $S^w$.
\end{theorem}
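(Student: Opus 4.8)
The plan is to set up an induction on $n$ (equivalently on the number of ``blocks'' $t_1,\ldots,t_{n-1}$ of the bubblesort word), mirroring the two parallel fiber-bundle decompositions already established: the one for $\hat{S}^w$ coming from projection onto successive rows (Propositions \ref{P11}, \ref{P12}) and the one for $\BBS^w$ coming from the projection $\pi_1:\BBS^w\to G^{t_1}$ of Proposition \ref{P234}. First I would dispose of the trivial case $w(n)=n$, where both sides reduce to the corresponding objects for the restriction $w\in S_{n-1}$ and the bubblesort word for $w$ is unchanged, so the induction hypothesis applies directly. Assuming $w(n)\neq n$, set $m:=n-w(n)$. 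On the $\hat{S}^w$ side, the proof of Proposition \ref{P12} shows that projection onto row $n-1$ realizes $\hat{S}^w$ as a fiber bundle whose base is the space of flags $(\ref{eqidn})$ inside $E/F_{w(n)-1}$ satisfying $(\ref{eqidn1})$ --- which is exactly the Kempf--Laksov resolution of $\Gr_m(E/F_{w(n)-1})$ for the multi-index $\beta_i=i+1$ --- and whose fibers are copies of $\hat{S}^{w'}$, where $w'$ is $w$ regarded as a bijection $\{1,\dots,n-1\}\to\{1,\dots,n\}\setminus\{w(n)\}$ (identified with an element of $S_{n-1}$ via Remark \ref{Bij}). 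On the $\BBS^w$ side, Proposition \ref{P234} gives a fiber bundle $\pi_1:\BBS^w\to \mathrm{image}(\pi_1)$ with the same base (the Kempf--Laksov resolution of $\Gr_m(\bC^{m+1})$) and fibers $\BBS^{w'}$.

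The core of the argument is to produce a biholomorphism $\Phi:\hat{S}^w\to\BBS^w$ covering the identity on the common base and restricting on each fiber to the isomorphism $\hat{S}^{w'}\xrightarrow{\sim}\BBS^{w'}$ supplied by the induction hypothesis. I would do this by constructing the map explicitly rather than by an abstract bundle-classification argument. Given $\ell_{*,*}\in\hat{S}^w$, the entries $\ell_{n-1,w(n)+1},\dots,\ell_{n-1,n}$ determine a point $[W_1,\dots,W_m]$ of the base (as in the proof of \ref{P12}); these should be sent to the first $m$ Grassmannian coordinates $[V_1,\dots,V_m]$ of $\BBS^w$ --- one must check that the Kempf--Laksov incidence relations among the $\ell_{n-1,*}$ translate into the relations $(\ref{increl2})$ for $1\le j\le m$, which is part (i) of Proposition \ref{P234}. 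The remaining coordinates of $\ell_{*,*}$ --- namely the sub-bioriented-flag sitting in rows $1,\dots,n-2$, with column $w(n)$ deleted and with the (now fixed) row $n-1$ playing the role of the last row --- form precisely a point of $\hat{S}^{w'}$ for the shifted flag $F'_*: F_1\subset\cdots\subset F_{w(n)-1}\subset W_1\subset\cdots\subset W_m$; apply the inductive isomorphism to land in the fiber $\BBS^{w'}$ over $[W_1,\dots,W_m]$. The key verification is that this assignment is well defined and bijective, and that under it the resolution projections agree: the row-$n$ entries $\ell_{p,n}$ of $\hat{S}^w$ must match the components $V_{p(i)}$ of $(\ref{resB})$. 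Here one uses the combinatorial identity $p(i)=\sum_{j=i+1}^n\#\{k: w(k)>w(j)\}$ together with the fact, established in \ref{P234}, that all Bott--Samelson coordinates indexed by $j>m$ lie inside $W_m$, so that the ``last column'' $\ell_{*,n}$ is built from the fiber data via $F'_*$ exactly as in the lower-dimensional resolution.

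There is a bookkeeping subtlety I would flag and handle with care: $\hat{S}^w$ as defined carries redundant Grassmannian copies (the columns and rows that are forced constant), and one must match it against $\BBS^w$, which Magyar's construction presents with exactly $l(w)$ Grassmannians. The cleanest route is to first pass to the non-redundant model of $\hat{S}^w$ described in the remark after Proposition \ref{P11} --- closing each apartment at the last label of its floor, which by Proposition \ref{P01} leaves exactly $l(w)$ copies --- and then exhibit the explicit identification of these surviving copies with the Grassmannian factors $\Gr_{d_1},\dots,\Gr_{d_{l(w)}}$ of $\BBS^w$, sending the $i$-th surviving copy (read in the bubblesort order) to $V_i$. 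Concretely, the copies coming from rows $n-1$ down to $1$ in the building, read left to right along each floor, should correspond to the blocks $t_1, t_2,\dots, t_{n-1}$ of the bubblesort word in order; matching the two incidence systems then amounts to checking, block by block, that the north/west inclusions $\ell_{p,q}\subset\ell_{p+1,q}$ and $\ell_{p,q}\subset\ell_{p,q+1}$ become the relations $V_{l(j)}\subset V_j\subset V_{r(j)}$ under the definition of $l(j),r(j)$ in step (iv).

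\textbf{Main obstacle.} I expect the genuinely delicate point to be not the fiber-bundle induction --- which is essentially forced once both bundle structures with the shared Kempf--Laksov base are in hand --- but the precise bookkeeping of the index translation between the two-dimensional ``building'' layout of the non-redundant copies in $\hat{S}^w$ and the one-dimensional bubblesort word indexing $V_1,\dots,V_{l(w)}$ of $\BBS^w$, i.e.\ verifying that the maps $l(\cdot)$ and $r(\cdot)$ of step (iv) reproduce exactly the north- and west-neighbour relations of the building. This is where an error would most easily hide, and it is the step that in effect encodes ``why bubblesort,'' so it deserves the bulk of the written proof; everything else is an assembly of results already proved in \ref{P11}, \ref{P12}, \ref{P01}, and \ref{P234}, plus the inductive hypothesis.
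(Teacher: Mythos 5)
Your proposal is essentially the same as the paper's proof: both construct the isomorphism row-by-row, sending row $n-1$ of $\hat{S}^w$ to the block $G^{t_1}$, invoking the two parallel fiber-bundle decompositions (the one built in the proofs of Propositions \ref{P11}--\ref{P12} and the one from Proposition \ref{P234}) over a shared Kempf--Laksov base, and closing the induction with the restriction $w'$ of Remark \ref{Bij}. The paper is considerably terser about the redundancy bookkeeping and the north/west-versus-$l(\cdot),r(\cdot)$ index matching that you carefully flag, but the approach, the decomposition, and the key lemmas invoked are the same.
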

\begin{proof} Recall   $\mathcal{P}^w$ of Def. \ref{Def22}. Denote by $(\mathcal{P}')^w$ the product of spaces in $\mathcal{P}^w$ where we erased the last row. Let $(\mathcal{P}')^w_i$ be the product of spaces on row $1\leq i\leq n-1$ of $(\mathcal{P}')^w$. We define a projection map:
\[ \pi:(\mathcal{P}')^w\ra G.
\]
as a product of projections $\pi_i^w:(\mathcal{P}')^w_i\ra G^{t_i}$ where $\pi_i^w$ is defined inductively by starting with row $n-1$. Define $\pi_{n-1}^w$ by projecting onto the components $w(n)+1,\ldots,n$. These correspond to the Grassmannians $\Gr_{w(n)}\times \ldots\times \Gr_{n-1}$. 

For all subsequent $\pi_i^w$ the coordinate corresponding to column $w(n)$ is killed. In other words we need define $\pi_{i}^w$ on the product of spaces in $(\mathcal{P}')^w_i$ in which the space on position $(i,w(n))$ is omitted. Define then $\pi_{n-2}^w$ as $\pi_{n-2}^{w'}$ where $w':\{1,\ldots,n-1\}\ra \{1,\ldots,n\}\setminus \{w(n)\}$ is the restriction of $w$. We use here Remark \ref{Bij} in order to associate to $w'$ a matrix product of spaces  $\mathcal{P}^{w'}$ and corresponding $(\mathcal{P}')^{w'}$. It is easy to see that $\pi_{n-2}^{w'}$ is well-defined, i.e. that it lands in $G^{t_2}$.

We first notice that $\pi$ defined in this way makes a commutative diagram with the projections from $(\mathcal{P}')^w$ to the product of spaces on the last column and the projection from $G$ to $\prod_{i=1}^{n-1}\Gr_i$ described at (\ref{resB}).

Finally, this is a biholomorphism, since by comparing the proof of Theorem \ref{Flres1} with Proposition \ref{P234} both sides are towers of fiber bundles with base-spaces and fibers given by Kempf-Laksov resolutions of the same type of Grassmannians and the projections in the towers commute with $\pi$.
\end{proof}

\section{The Schubert cells in the Grassmannian}

It has been long known that an open dense subset of the regular set of a Schubert variety can be realized as the stable  manifold of a certain critical manifold of a Morse-Bott function on the Grassmannian \cite{Wo}. The critical variety is just a product of projective spaces and the stable manifold fibers as a vector bundle over the critical manifold.  We begin our investigation by describing concretely what this vector bundle is. The particular case of a Schubert variety induced by one incidence relation has already been treated in \cite{HL}. We do not pursue the dynamical point of view here, as we do not need it,  although we emphasize that the results of this section are in the smooth category. The "philosophical" reason why this does not work in the holomorphic category is the general lack of holomorphic sections for surjective holomorphic morphisms between vector bundles. For example, for the quotient universal bundle over a Grassmannian one does not have an embedding into the trivial vector bundle.

Let $\Gr_k(E)$ be the Grassmannian of $k$-linear subspaces of a vector space E of dimension $n \geq  k$. We fix a complete increasing flag
\[F_*~: \quad {0}=F_0 \subset F_1 \subset\ldots\subset F_n =E\]
with $\dim {F_i} = i$. We will index Schubert varieties by strictly increasing sequences of numbers
\[\beta~:\quad 1\leq \beta_1 < \beta_2 <\ldots <\beta_k \leq n\]
\[ \overline{V_{\beta}} :=\{L\in\Gr_k(E)~|~ \dim{L\cap F_{\beta_i}} \geq i\}\]
We will reserve the notation
\[V_{\beta} :=\{L\in \Gr_k(E)~|~ \dim{L\cap F_{\beta_i}} =i\}\]
for an open dense subset of $\overline{V_{\beta}}$ which is smooth. To distinguish between $\overline{V_{\beta}}$ and $V_{\beta}$ we will call the latter the regular Schubert variety as it is a subset of the set of regular points of $\overline{V_{\beta}}$.

We also have the Schubert cell:
\[V_{\beta}^{\circ} :=\{L\in \Gr_k(E)~|~ \dim{L\cap F_j} =i, i=0,\ldots k, \beta_i \leq j<\beta_i+1, \beta_0 :=0, \beta_{k+1} :=\infty\}\]
 Notice that $V_{\beta}$, just like $\overline{V_{\beta}}$ depends only on the choice of nodes $F_{\beta_1} , \ldots , F_{\beta_k}$ in the complete
flag, while the cell $V_{\beta}^{\circ}$ depends also on the nodes $\beta_1-1,\beta_2-1,\ldots ,\beta_k-1$.
The latter is isomorphic with the affine space $\bC^{N_{\beta}}$ where $N_{\beta}:=\sum_{i=1}^k (\beta_i-i)$.
\begin{rem}  The case $k=1$ is uninteresting as $\Gr_1(E) = \mathbb{P}(E)$ and the Schubert variety
$\overline{V_\beta}$ is then $\mathbb{P}(F_{\beta_1})$ which is non-singular. So we might as well assume that $k\geq 2$. 
\end{rem}

We will also fix a complementary, decreasing flag $G^*$ such that
\[ E=F_i\oplus G^i, \qquad \forall i=0,\ldots,n\]
       For $0\leq j <i\leq n$ we define
      \[ F_i^j :=F_i\cap G^j\]
with $ \dim{F_j^i} = i-j$. 

Let $\mathbb{P} :=\prod_{i=1}^k\mathbb{P}\left(F_{\beta_i}^{\beta_{i-1}}\right)$. It has dimension $\beta_k-k$. It comes with an embedding
\[\mathbb{P}\ra\Gr_k(E),\qquad (L_1,\ldots,L_k) \ra \sum_{i=1}^kL_i.
\]
where the sum here is like on many other occasions a direct sum. We have a natural map:
\[ V_{\beta}\ra \mathbb{P},\quad L\ra \left(L\cap F_{\beta_1},P_{F_{\beta_2}^{\beta_1}}(L\cap F_{\beta_2}),...,P_{F_{\beta_k}^{\beta_{k-1}}}(L\cap F_{\beta_k})\right) \]
where the projection $P_{F_{\beta_i}^{\beta_{i-1}}}$ onto $F_{\beta_i}^{\beta_{i-1}}$ is taken with respect to the decomposition 
\[F_{\beta_i}=F_{\beta_{i-1}}\oplus F_{\beta_i}^{\beta_{i-1}}.\]
For every $i=1,\ldots k$, let $\tau_i\ra \mathbb{P}(F_{\beta_i}^{\beta_{i-1}})$ be the tautological line bundle, with the convention that if $\beta_i-\beta_{i-1}=1$ then $\tau_i:=F_{\beta_i}^{\beta_{i-1}}\ra \pt=\bP(F_{\beta_i}^{\beta_{i-1}})$.

Denote by $\tau^{\perp}$ a complement of $\tau$ inside the trivial vector bundle with fiber ${F_{\beta_i}^{\beta_{i-1}}}  $ over $\mathbb{P}\left(F_{\beta_i}^{\beta_{i-1}}\right)$. In  other words:
\[  \tau_i \oplus \tau_i^{\perp} = F_{\beta_i}^{\beta_{i-1}} \]
using the same notation for the vector space and the vector bundle.

It is self-understood that $\tau_i^{\perp}=0$ if $\beta_i-\beta_{i-1} = 1$.
\begin{rem} There is apriori no canonical choice for $\tau_i^{\perp}$ but in the presence of a hermitian metric on $E
$ one can take the orthogonal complement of $\tau_i$. It is due to a lack of holomorphic structure on $\tau_i^{\perp}$ that the results in this section involving $\tau_i^{\perp}$ hold only in the smooth category.
\end{rem}
Define the following vector bundle over $\bP$:
\[\bH:=\Hom(\tau_2,\tau_1^{\perp})\times \Hom(\tau_3,\tau_1^{\perp}+\tau_2^{\perp})\times \ldots\times \Hom(\tau_k,\tau_1^{\perp}+\ldots+\tau_{k-1}^{\perp})
\]
Just to make sure there is no confusion about notation: $\tau_i$ represents both the line bundle
over $\mathbb{P}\left(F_{\beta_i}^{\beta_{i-1}}\right)$ and its pull-back to $\bP$ via the obvious projection. Moreover $\sum_{j=1}^i \tau_j^{\perp}$
is a subbundle of the trivial bundle $\underline{F_{\beta_i}}$ obtained by taking the internal direct sum of the corresponding bundles $\tau_j^{\perp}$ since surely $\tau_j^{\perp}\cap\tau_{j'}^{\perp}$ = {0} for all $j\neq j'$.

Notice that $\bH$ is a bundle of rank $\dim{V_{\beta}}-\dim{\bP}$.
Let 
\[ \Phi:\bH\ra \Gr_k(E),\qquad \Phi(A_2,\ldots, A_k)=L_1+\sum_{i=2}^k\Gamma_{A_i}
\]
where for $1\leq i\leq k$, we take $L_i\in\bP(F_{\beta_i}^{\beta_{i-1}})$, $L_i^{\perp}:=\left(\tau_i^{\perp}\right)_{L_i}$ is a complement for $L_i$ in $F_{\beta_i}^{\beta_{i-1}}$ and $A_i\in \Hom(L_i,\sum_{j=1}^{i-1}L_j^{\perp})$ are linear maps, $i\geq 2$. To make the definition more symmetric let $A_1:L_1\ra \{0\}$. Then
\[\Phi((A_i)_{1\leq i\leq k})=\Phi\left((L_i,A_i)_{1\leq i\leq k}\right)=\sum_{i=1}^k\Gamma_{A_i}.
\]
\begin{lem} The map $\Phi$ is well-defined.
\end{lem}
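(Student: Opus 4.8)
The plan is to unwind what ``well-defined'' means here. The potential ambiguities in the formula $\Phi((A_i)_{1\le i\le k})=\sum_{i=1}^k\Gamma_{A_i}$ are: first, that the sum of the graphs $\Gamma_{A_i}\subset F_{\beta_i}^{\beta_{i-1}}\oplus\bigl(\sum_{j<i}L_j^\perp\bigr)$ is actually a \emph{direct} sum inside $E$, so that $\Phi$ lands in $\Gr_k(E)$ (i.e. the result has dimension exactly $k$); and second, that the construction is independent of the auxiliary choices, namely of the complements $L_i^\perp$ inside $F_{\beta_i}^{\beta_{i-1}}$ and — once these complements vary in a bundle $\tau_i^\perp$ — that the whole thing patches into a smooth map on the total space of $\bH$. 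So I would organize the proof into these two checks.

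First I would check the directness of the sum. Fix the point $(L_1,\dots,L_k)\in\bP$ and the maps $A_i\in\Hom(L_i,\sum_{j=1}^{i-1}L_j^\perp)$. Each graph $\Gamma_{A_i}=\{v+A_i v\mid v\in L_i\}$ is a line (it is $L_1$ itself when $i=1$), contained in $L_i\oplus\sum_{j<i}L_j^\perp\subset F_{\beta_i}$. The key point is the filtration by the nodes $F_{\beta_1}\subset F_{\beta_2}\subset\cdots\subset F_{\beta_k}$ together with the direct-sum decompositions $F_{\beta_i}=F_{\beta_{i-1}}\oplus F_{\beta_i}^{\beta_{i-1}}$. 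Suppose $\sum_{i=1}^k(v_i+A_iv_i)=0$ with $v_i\in L_i$. Project onto $F_{\beta_k}^{\beta_{k-1}}$ along $F_{\beta_{k-1}}$: every term with $i\le k-1$ lies in $F_{\beta_{k-1}}$ (since $\Gamma_{A_i}\subset F_{\beta_i}\subset F_{\beta_{k-1}}$), and for $i=k$ the term $v_k+A_kv_k$ projects to $v_k$ because $A_kv_k\in\sum_{j<k}L_j^\perp\subset F_{\beta_{k-1}}$ while $v_k\in L_k\subset F_{\beta_k}^{\beta_{k-1}}$. Hence $v_k=0$, so $A_kv_k=0$, and we are reduced to the same statement for $k-1$. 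By downward induction all $v_i=0$, so the sum is direct and $\dim\Phi(\cdots)=k$, i.e.\ $\Phi(\cdots)\in\Gr_k(E)$.

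Next I would check independence of the choice of complements and smoothness. Changing $L_i^\perp$ (equivalently, the splitting $\tau_i\oplus\tau_i^\perp=\underline{F_{\beta_i}^{\beta_{i-1}}}$) changes the target $\sum_{j<i}L_j^\perp$ of $A_i$, but note that the \emph{subspace} $\sum_{i=1}^k\Gamma_{A_i}$ only depends on the data through its image: given $(L_i,A_i)$ with respect to one choice of complements, one obtains the same $k$-plane $L$ with respect to another choice by composing with the (fiberwise linear, smoothly varying) change-of-splitting isomorphisms. More invariantly, I would observe that $\Phi$ factors through the intrinsic description: $L=\Phi(\cdots)$ is characterized by $L\cap F_{\beta_i}=\bigoplus_{j\le i}\Gamma_{A_j}$ and $P_{F_{\beta_i}^{\beta_{i-1}}}(L\cap F_{\beta_i})=L_i$, conditions that do not mention the complements; this is exactly the inverse of the map $V_\beta\to\bP$ and the graph parametrization set up just before the lemma. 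Since on each trivializing chart of $\bP$ the maps $A_i$ and the complements $\tau_i^\perp$ vary smoothly and the formula for $\Phi$ is an algebraic (rational, in fact polynomial in suitable affine charts of $\Gr_k(E)$) expression in these, $\Phi$ is a smooth (indeed real-analytic) map on the total space of $\bH$.

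I expect the only genuine obstacle to be bookkeeping: making sure the inductive projection argument for directness is stated cleanly, and being careful that ``well-defined'' is interpreted as (a) landing in $\Gr_k(E)$ and (b) patching smoothly over $\bP$ despite the non-canonical $\tau_i^\perp$. Neither step is deep; the first is the filtration/triangularity trick above, and the second is the remark, already flagged in the paper, that $\tau_i^\perp$ has no holomorphic structure but is perfectly smooth, so $\Phi$ is a bona fide smooth map even though it is not holomorphic.
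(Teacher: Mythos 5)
Your directness argument is essentially the paper's: both rest on the filtration $F_{\beta_1}\subset\cdots\subset F_{\beta_k}$ and the fact that $L_{i}\cap F_{\beta_{i-1}}=\{0\}$, with the paper phrasing it as an upward observation ($\Gamma_{A_{i+1}}\not\subset\sum_{j\le i}\Gamma_{A_j}$ since the latter lies in $F_{\beta_i}$) while you project off $F_{\beta_{k-1}}$ and induct downward. Your extra paragraph on independence of the complements $\tau_i^\perp$ and on smoothness is harmless but beyond the scope of the lemma, which here only asserts that the formula lands in $\Gr_k(E)$; the domain $\bH$ already carries a fixed choice of $\tau_i^\perp$, so there is no ambiguity of that kind to resolve.
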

\begin{proof} Since all graphs are $1$-dimensional, we need to check that $\Gamma_{A_{i+1}}\not\subset\sum_{j=1}^i\Gamma_{A_{j}}$ and this is a consequence of the fact that $\sum_{j=1}^i\Gamma_{A_j}$ is a subset of $F_{\beta_i}$ while $L_{i+1}\cap F_{\beta_i}=\{0\}$.
\end{proof}
\begin{theorem} \label{thm1}\begin{itemize}
\item[(i)] The map $\Phi:\bH\ra \Gr_k(E)$ is injective with image $V_{\beta}$ and makes the obvious diagram commutative together with the projections of $\bH$ and $V_{\beta}$ onto $\bP$.
\item[(ii)] The map $\Phi:\bH\ra V_{\beta}$ is a fiber bundle diffeomorphism over $\bP$.
\end{itemize}
\end{theorem}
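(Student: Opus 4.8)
The plan is to construct an explicit inverse map and then check that both $\Phi$ and its inverse are smooth, which together with the already-verified commutativity over $\bP$ will give the fiber bundle diffeomorphism in (ii); item (i) is then immediate. First I would establish that $\Phi$ lands in $V_\beta$: given $(L_i, A_i)_{1\le i\le k}$, write $L := \sum_{i=1}^k \Gamma_{A_i}$. Using $\Gamma_{A_i} \subset L_i \oplus \sum_{j<i} L_j^\perp \subset F_{\beta_i}$ one sees inductively that $\sum_{j=1}^i \Gamma_{A_j} \subset F_{\beta_i}$, so $\dim(L \cap F_{\beta_i}) \ge i$; conversely, projecting $L$ onto $F_{\beta_k}/F_{\beta_i}$ (using the fixed complementary flag $G^*$), the images of $\Gamma_{A_{i+1}}, \ldots, \Gamma_{A_k}$ have components along $L_{i+1}, \ldots, L_k$ which are linearly independent modulo $F_{\beta_i}$, forcing $\dim(L \cap F_{\beta_i}) = i$. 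Hence $L \in V_\beta$, and by construction $L \cap F_{\beta_i} = \sum_{j=1}^i \Gamma_{A_j}$, so the natural map $V_\beta \to \bP$ sends $L$ to $(L_1, \ldots, L_k)$, giving commutativity of the diagram.

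Next I would build the inverse. Given $L \in V_\beta$, set $\ell_i := L \cap F_{\beta_i}$, a flag $\ell_1 \subset \ell_2 \subset \cdots \subset \ell_k = L$ with $\dim \ell_i = i$. Put $L_i := P_{F_{\beta_i}^{\beta_{i-1}}}(\ell_i)$, a line in $F_{\beta_i}^{\beta_{i-1}}$, which is the $i$-th component of the image point in $\bP$; this determines the fibre $\bH_{(L_\ast)}$ and the complements $L_j^\perp = (\tau_j^\perp)_{L_j}$. One then checks that $\ell_i = \Gamma_{A_i} \oplus \ell_{i-1}$ for a \emph{unique} linear map $A_i \colon L_i \to \bigoplus_{j=1}^{i-1} L_j^\perp$: indeed $\ell_i$ is a line in $F_{\beta_i} = F_{\beta_{i-1}} \oplus F_{\beta_i}^{\beta_{i-1}}$ mapping isomorphically onto $L_i$ under $P_{F_{\beta_i}^{\beta_{i-1}}}$, so it is the graph of a map $L_i \to F_{\beta_{i-1}}$; writing $F_{\beta_{i-1}} = \ell_{i-1} \oplus \bigoplus_{j=1}^{i-1} L_j^\perp$ (which holds because $\ell_{i-1} = \bigoplus_{j<i} \Gamma_{A_j}$ has the right dimension and intersects $\bigoplus_j L_j^\perp$ trivially by the inductive graph description) and projecting, the $\ell_{i-1}$-component can be absorbed into $\Gamma_{A_j}$'s, leaving the unique $A_i$ valued in $\bigoplus_{j<i} L_j^\perp$. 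This defines $\Psi \colon V_\beta \to \bH$, $L \mapsto ((L_i), (A_i))$, and by the explicit descriptions $\Psi$ and $\Phi$ are mutually inverse.

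Finally I would address smoothness. The map $L \mapsto (L \cap F_{\beta_i})_i$ is smooth on $V_\beta$ because intersection with a fixed subspace has locally constant (hence, here, constant) dimension on $V_\beta$, so it is a smooth section of the relevant bundle of flags; the projections $P_{F_{\beta_i}^{\beta_{i-1}}}$ and the passage from a graph to its defining linear map are all algebraic/smooth operations on the total spaces of the tautological bundles, so $\Psi$ is smooth; $\Phi$ is manifestly smooth as it is built from addition of graphs, which is a fibrewise-linear, smooth operation. Together with the commutativity over $\bP$ this makes $\Phi$ a diffeomorphism respecting the bundle projections, hence a fiber bundle diffeomorphism, proving (ii); part (i) follows since injectivity and the image being exactly $V_\beta$ have been verified along the way.

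I expect the main obstacle to be the inductive bookkeeping in the second paragraph: showing that the decomposition $F_{\beta_{i-1}} = \ell_{i-1} \oplus \bigoplus_{j=1}^{i-1} L_j^\perp$ holds and that it forces $A_i$ to take values in $\bigoplus_{j<i} L_j^\perp$ (rather than merely in $F_{\beta_{i-1}}$) requires carefully tracking how the earlier graphs $\Gamma_{A_j}$ sit inside $F_{\beta_{i-1}}$ and confirming the dimension count $\dim \ell_{i-1} + \sum_{j<i}\dim L_j^\perp = \beta_{i-1}$, which is exactly the identity $i-1 + \sum_{j<i}(\beta_j - \beta_{j-1} - 1) = \beta_{i-1}$. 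Once this linear-algebra core is set up cleanly, everything else is routine.
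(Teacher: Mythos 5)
Your proof is correct and follows the same overall strategy as the paper's: compute $\Phi(A_*)\cap F_{\beta_i}$ to see that the image lands in $V_\beta$ with the projection to $\bP$ commuting, then construct an explicit inverse. The one genuine variation is in the surjectivity step. After producing a preliminary $A_i'\colon L_i\to F_{\beta_{i-1}}$, the paper decomposes $A_{i}'=(B_i,C_i)$ relative to the fixed splitting $F_{\beta_{i-1}}=\bigoplus_{j<i}L_j\oplus\bigoplus_{j<i}L_j^\perp$ and then eliminates the $B_i$'s globally by solving the explicit upper-triangular linear system~(\ref{eq5}), expressing the final $A_i$'s as multilinear combinations of the $B_j$'s and $C_j$'s. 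You instead split $F_{\beta_{i-1}}=\ell_{i-1}\oplus\bigoplus_{j<i}L_j^\perp$ (where $\ell_{i-1}=L\cap F_{\beta_{i-1}}$) and simply project $A_i'$ off the $\ell_{i-1}$-component at each inductive step; the validity of this splitting is exactly your observation that $\ell_{i-1}=\bigoplus_{j<i}\Gamma_{A_j}$ meets $\bigoplus_{j<i}L_j^\perp$ trivially. Your version is slightly cleaner, since it avoids the explicit change of coordinates and makes the uniqueness of $A_i$ immediate from the direct-sum decomposition, at the cost of the inductive bookkeeping that you correctly flag as the main point of care. One small slip to fix: you wrote that ``$\ell_i$ is a line in $F_{\beta_i}$ mapping isomorphically onto $L_i$,'' but $\dim\ell_i=i$; what you mean is that any complement of $\ell_{i-1}$ inside $\ell_i$ is such a line. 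On smoothness, you give more detail than the paper, which simply asserts that item (ii) follows from item (i).
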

\begin{proof} (i) Let us see first that $\Imag \Phi\subset V_{\beta}$. One needs to understand $\Phi(A_1,\ldots, A_k)\cap F_{\beta_i}$. Take $v_i\in L_i$ such that
\[ \sum_{i=1}^kv_i+\sum_{i=1}^kA_iv_i=w\in F_{\beta_i}
\]
We infer
\[F_{\beta_i}\oplus \sum_{j=i+1}^kL_{j}^{\perp}\ni w-\sum_{j=1}^iv_j-\sum_{j=1}^kA_jv_j=\sum_{j=i+1}^{k}v_j\in \sum_{j=i+1}^kL_j.
\]
and therefore $v_j=0$ for all $j\geq i+1$ and $w\in \sum_{j=1}^i\Gamma_{A_i}$.  We therefore  get
\[\Phi(A_1,\ldots,A_k)\cap F_{\beta_i}=\sum_{j=1}^i\Gamma_{A_i}
\]
as the other inclusion is obvious and the latter space has dimension $i$ as a direct sum of $1$-dimensional spaces.

Take now $\Phi((L_i,A_i)_{1\leq i \leq k})=\Phi((L_i',A_i')_{1\leq i\leq k})$. On one hand
\[P_{F_{\beta_i}^{\beta_{i-1}}}(\Phi((L_i,A_i)_{1\leq i\leq k})\cap F_{\beta_i})=P_{F_{\beta_i}^{\beta_{i-1}}}\left(\sum_{j=1}^i\Gamma_{A_i}\right)=P_{F_{\beta_i}^{\beta_{i-1}}}(\Gamma_{A_i})=L_i
\]
We therefore infer that $L_i=L_i'$, $i=1,\ldots,k$. This also proves that composing $\Phi$ and the projection $V_{\beta}\ra \bP$ gives the projection $\bH\ra \bP$.

Fix $v_i\in L_i$, $i=1,\ldots,k$. Then there exist $v_i'\in L_i$, $1\leq i\leq k$ such that:
\[\sum_{i=1}^kv_i+\sum_{i=1}^kA_iv_i=\sum_{i=1}^kv_i'+\sum_{i=1}^kA_i'v_i'
\]
Separate first the $\sum_{i=1}^kL_i$  and $\sum_{i=1}^kL_i^{\perp}$ components in order to get that $\sum v_i=\sum v_i'$ and since the $L_i$ components are linearly independent deduce that $v_i=v_i'$, $1\leq i\leq k$. Take now the $L_1^{\perp}$, $L_2^\perp$,\ldots $L_k^{\perp}$ components
\[\left\{\begin{array}{ccc}
\sum_{j=2}^k(A_j^1-(A_j')^1)v_j&=&0\\
\sum_{j=3}^k(A_j^2-(A_j')^2)v_j&=&0\\
\ldots\\
(A_k^{k-1}-(A_k')^{k-1})v_{k}&=&0
\end{array}\right.
\]
where $A_i^j$, $(A_i')^{j}$ are the $L_j^{\perp}$ components of $A_i$, $A_i'$. As this holds for all $v_i\in L_i$ one has $A_i=A_i'$, $i=2,\ldots, k$. Therefore $\Phi$ is injective.

We show now that $V_{\beta}\subset \Imag \Phi$. Let $L\in V_{\beta}$ and put $L_i:=P_{F_{\beta_i}^{\beta_{i-1}}}(L\cap F_{\beta_i})$. We show first that there exist $A_{i}':L_i\ra F_{\beta_{i-1}}$ such that
\[L\cap F_{\beta_i}=L_1+\sum_{j=2}^i\Gamma_{A_j'}
\]  
This is trivially true for $i=1$ with $A_1'\equiv 0$. Assume it is true for $i$. Since $\dim{L\cap F_{\beta_j}}=j$ we get that
\[L\cap F_{\beta_{i+1}}=L\cap F_{\beta_i}+L',\qquad L'\subset F_{
\beta_{i+1}},~  L'\cap F_{\beta_i}=\{0\}
\]
where $\dim{L'}=1$. The space $L'$ is just a complement of $F_{\beta_i}\cap L$ inside $L\cap F_{\beta_{i+1}}$. Clearly $P_{F_{\beta_{i+1}}^{\beta_{i}}}(L')=L_{i+1}$ and therefore $L'\subset L_{i+1}+F_{\beta_i}$. Since $L'\cap F_{\beta_i}=\{0\}$, there exists $A_{i+1}':L_{i+1}\ra F_{\beta_i}$ such that $L'=\Gamma_{A_{i+1}'}$. Hence
\[ L\cap F_{\beta_{i+1}}=\sum_{j=1}^{i+1}\Gamma_{A_j'}
\]
From $A_i'$ we obtain $A_i$ as follows. Decompose for $i\geq 1$, $A_{i+1}'=(B_{i+1},C_{i+1})$ with $B_{i+1}:L_{i+1}\ra \sum_{j=1}^i L_j$ and $C_{i+1}:L_{i+1}\ra \sum_{j=1}^i L_j^{\perp}$. Then $v\in L=\sum_{i=1}^k \Gamma_{A_i'}$ can be written as
\begin{equation}\label{eq411}v=v_1'+\sum_{j=2}^k(v_j'+B_jv_j'+C_jv_j')
\end{equation}
Set up now the following linear system of equations $v_i,v_i'\in L_i$:
\begin{equation}\label{eq5}\left(\begin{array}{c}v_1\\ v_2\\\ldots \\v_k \end{array}\right)=\left(\begin{array}{ccccc}1&  B_2^1 & B_3^1& \ldots & B_k^1\\
0&1 & B_3^2 &\ldots & B_k^2\\
&&\ldots &&\\
0& 0&0 &\ldots &  1 \end{array}\right) \left(\begin{array}{c}v_1'\\ v_2'\\\ldots\\  v_k' \end{array}\right)
\end{equation}
where the linear maps $B_i^j$ which take values in $L_j$ for $1\leq j<i$ are the components of $B_i$.

Clearly the system has a unique solution with $v_k=v_k'$ and 
\[v_i':=v_i-\sum_{i<j_1<\ldots <j_p\leq k}B^i_{j_1}B^{j_1}_{j_2}\ldots B_{j_p}^{j_{p-1}} v_{j_p},\quad i=1,\ldots k-1\]
Summing up the columns of $(\ref{eq5})$ gives $\sum_{j=1}^kv_j=v_1'+\sum_{j=2}^kv_j'+B_j'v_j'$. It follows that in (\ref{eq411}) one has
\[v=v_1'+\sum_{j=2}^kv_j'+B_jv_j'+C_jv_j'=v_1+\sum_{j=2}^kA_jv_j
\]
where $A_j:L_i\ra \sum_{p=1}^{j-1}L_p^{\perp}$ is a multilinear combination of $C_j$ and of the components of $B_j$, $j>i$.\\

(ii) Consequence of part (i).
\end{proof}
We produce now a conjugate Schubert variety $\overline{V^*_{\beta}}$ such that the corresponding open subset  $V_{\beta}^*$ intersects $V_{\beta}$ transversally along $\bP$. Moreover $V^*_{\beta}$ fibers over $\bP$ just like $V_{\beta}$. Define
\[V_{\beta}^*:=\{L\in \Gr_k(E)~|~\dim{L\cap G^{\beta_i}}=k-i,~~ i=0,\ldots k\}.
\] 
while $\overline{V_{\beta}^*}$ replaces the  dimensional condition by $\geq$.
\begin{rem} This $V_{\beta}^*$ is the opposite Schubert variety as defined in \cite{KrLa}, however not for the increasing sequence $\beta$  but for the increasing sequence $\gamma$ where $\gamma_i:=\beta_{i-1}+1$.
\end{rem}
\begin{prop} The set $V_{\beta}^*$ is a submanifold of $\Gr_k(E)$ of codimension $\sum_{i=1}^{k-1}\beta_i-i$.
\end{prop}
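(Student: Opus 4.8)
The plan is to mimic the proof of Theorem \ref{thm1} by constructing an explicit diffeomorphism from a vector bundle over $\bP$ onto $V_{\beta}^*$, from which smoothness and the codimension count follow immediately. First I would note the analogy: $V_{\beta}^*$ is defined by the conditions $\dim(L\cap G^{\beta_i})=k-i$, and there is a natural map $V_{\beta}^*\ra\bP$ given by $L\ra\bigl(\ldots,P_{F_{\beta_i}^{\beta_{i-1}}}(L\cap G^{\beta_{i-1}}\cap F_{\beta_i}),\ldots\bigr)$ or, more cleanly, by taking the one-dimensional pieces that $L$ contributes to each graded piece $F_{\beta_i}^{\beta_{i-1}}$ of the filtration $E=F_{\beta_k}\supset F_{\beta_{k-1}}\supset\ldots$ read off via the complementary flag $G^*$. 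Concretely, for $L\in V_{\beta}^*$ the chain $L\supset L\cap G^{\beta_1}\supset\ldots\supset L\cap G^{\beta_k}=\{0\}$ has one-dimensional successive quotients, and projecting the $i$-th quotient into $F_{\beta_i}^{\beta_{i-1}}$ gives a line $L_i$, hence a point of $\bP$.

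Next I would identify the fiber. The claim is that $V_{\beta}^*$ is diffeomorphic over $\bP$ to the bundle $\bH^*=\prod_{i=1}^k\Hom\bigl(\tau_i,\bigoplus_{j=i+1}^{k+1}\tau_j^{\perp}\bigr)$ announced in the second theorem of the introduction, with $\tau_{k+1}^{\perp}:=G^{\beta_k}$. The diffeomorphism is $\Phi^*((L_i,A_i)_i)=\sum_{i=1}^k\Gamma_{A_i}$ where now $A_i:L_i\ra\bigoplus_{j=i+1}^{k+1}L_j^{\perp}$, i.e.\ the graphs point in the direction of the \emph{larger} subspaces of the complementary flag rather than the smaller ones. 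One then repeats verbatim the three verifications from the proof of Theorem \ref{thm1}: (a) $\Imag\Phi^*\subset V_{\beta}^*$, by computing $\Phi^*((L_i,A_i)_i)\cap G^{\beta_i}=\sum_{j>i}\Gamma_{A_j}$ via the same linear-algebra separation of components; (b) injectivity, by recovering $L_i$ as $P_{F_{\beta_i}^{\beta_{i-1}}}$ of a suitable intersection and then recovering the $A_i$ by triangular elimination; (c) surjectivity onto $V_{\beta}^*$, by induction on $i$ producing graphs $\Gamma_{A_j'}$ spanning $L\cap G^{\beta_{i}}$ and then a change of basis (triangular in the $L_j$-components) converting the $A_j'$ into genuine elements $A_j$ of $\bH^*$.

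Once $\Phi^*:\bH^*\ra V_{\beta}^*$ is a fiber-bundle diffeomorphism over $\bP$, the codimension is a rank count: $\dim V_{\beta}^*=\dim\bP+\operatorname{rk}\bH^*$. Here $\dim\bP=\sum_{i=1}^k(\beta_i-\beta_{i-1}-1)=\beta_k-k$, and $\operatorname{rk}\bH^*=\sum_{i=1}^k\sum_{j=i+1}^{k+1}(\beta_j-\beta_{j-1}-1)$ with the convention $\beta_{k+1}-\beta_k-1:=\dim G^{\beta_k}-1=n-\beta_k-1$; interchanging the order of summation gives $\operatorname{rk}\bH^*=\sum_{j=2}^{k+1}(j-1)(\beta_j-\beta_{j-1}-1)$. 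Adding $\dim\bP$ and subtracting from $\dim\Gr_k(E)=k(n-k)$ should telescope to $\sum_{i=1}^{k-1}(\beta_i-i)$. The main obstacle, as in the case of $\Phi$, is step (c): showing that an arbitrary $L\in V_{\beta}^*$ lies in the image requires carefully organizing the recursive construction of the $A_i'$ and then proving the triangular linear system that converts $A_i'$ to $A_i$ is solvable — here one must be attentive that the graphs now run "upward" into $\bigoplus_{j>i}L_j^{\perp}$ including the tail $G^{\beta_k}$, so the indexing and the direct-sum decompositions $E=F_{\beta_i}\oplus G^{\beta_i}$ must be used in the opposite order from Theorem \ref{thm1}. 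The bookkeeping is routine but is the only place where something could go wrong.
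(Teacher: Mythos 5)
Your plan — construct an explicit bundle diffeomorphism $\Phi^*:\bH^*\ra V_{\beta}^*$ over $\bP$ and read off the codimension from $\operatorname{rk}\bH^*$ — is workable, but it is a genuinely different and considerably heavier route than the paper's. The paper's proof is a two-line reindexing: set $H_*:=G^{n-*}$, rewrite the defining conditions $\dim(L\cap G^{\beta_i})=k-i$ as $\dim(L\cap H_{n-\beta_{k-i}})=i$, and observe that $V_{\beta}^*$ is the open subset of the regular Schubert variety $\widehat{V_{\beta}^*}=\{L\mid\dim(L\cap H_{n-\beta_{k-i}})=i,\;1\leq i\leq k\}$ cut out by the open condition $L\cap H_{n-\beta_k}=\{0\}$. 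Since a regular Schubert variety is already known to be a manifold of dimension $\sum(\gamma_i-i)$ for its multi-index $\gamma$, the codimension formula falls out immediately upon substituting $\gamma_i=n-\beta_{k-i}$. What you propose is in effect a re-derivation of Theorem \ref{thm2} of the paper, which the author proves separately right \emph{after} this proposition (noting there that it ``follow[s] closely the proof of Theorem \ref{thm1}''); so your method is sound in principle but front-loads work that the paper intentionally deferred, and it buys you nothing extra for the statement at hand.

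There is also a concrete bookkeeping error in your final count. The bundle $\tau_{k+1}^{\perp}:=G^{\beta_k}$ has rank $n-\beta_k$, not $n-\beta_k-1$: unlike $\tau_j^{\perp}$ for $j\leq k$, it is the full graded piece $G^{\beta_k}$, not the complement of a line inside it, so there is no ``$-1$.'' Your convention should therefore be $\beta_{k+1}-\beta_k-1:=n-\beta_k$ (equivalently $\beta_{k+1}:=n+1$). With your stated convention, $\operatorname{rk}\bH^*$ is short by exactly $k$, and the claimed telescoping would land on $\sum_{i=1}^{k-1}(\beta_i-i)+k$ rather than $\sum_{i=1}^{k-1}(\beta_i-i)$. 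Correcting the convention, one gets $\operatorname{rk}\bH^*=k(n-\beta_k)+\sum_{j=2}^{k}(j-1)(\beta_j-\beta_{j-1}-1)$, and together with $\dim\bP=\beta_k-k$ this indeed yields $\dim\Gr_k(E)-\dim V_{\beta}^*=\sum_{i=1}^{k-1}(\beta_i-i)$.
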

\begin{proof} Let $H_*:=G^{n-*}$ be the increasing flag obtained from $G^*$. Then
\[V^*_{\beta}=\{L~|~\dim{L\cap H_{n-\beta_i}}=k-i,~i=0,\ldots k\}=\{L~|~\dim{L\cap H_{n-\beta_{k-i}}}=i,~i=0,\ldots k\}
\]
Notice that $V_{\beta}^*$ is the open subset of the (regular) Schubert variety 
\begin{equation}\label{eq6}\widehat{V_{\beta}^*}=\{L~|~\dim{L\cap H_{n-\beta_{k-i}}}=i,~i=1,\ldots, k\}
\end{equation}
defined by the (open) condition $L\cap H_{n-\beta_k}=\{0\}$. 
\end{proof}
Notice that $V_{\beta}^*$ comes also with a projection to $\bP$:
\[L\ra \left(P_{F_{\beta_1}}(L\cap G^{\beta_0}),\ldots,P_{F_{\beta_k}^{\beta_{k-1}}}(L\cap G^{\beta_{k-1}})\right)
\]
where the projection uses the decomposition $G^{\beta_{i-1}}=G^{\beta_i}\oplus F_{\beta_i}^{\beta_{i-1}}$, $i=1,\ldots, k$.

Let now
\[\bH^*:=\prod_{i=1}^{k}\Hom\left(\tau_i,\sum_{j=i+1}^{k+1}\tau_{j}^{\perp}\right)\]
where as before $\tau_i\ra \bP\left(F_{\beta_i}^{\beta_{i-1}}\right)$ is the tautological bundle and $\tau_{k+1}^{\perp}:=G^{\beta_k}$ is the trivial vector bundle of dimension $n-\beta_k$.

\begin{theorem} \label{thm2} The map $\Phi^*:\bH^*\ra \Gr_k(E)$ defined by $\Phi^*((L_i,A_i)_{1\leq i\leq k})=\sum_{i=1}^k\Gamma_{A_i}$ is injective with image $V^*_{\beta}$. Moreover when restricting the codomain to $V_{\beta}^*$, $\Phi^*$ is a fiber bundle diffeomorphism with respect to the projections onto $\bP$.
\end{theorem}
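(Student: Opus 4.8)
The plan is to follow the proof of Theorem \ref{thm1} almost verbatim, interchanging the roles of the increasing flag $F_{\beta_\bullet}$ and the decreasing flag $G^{\beta_\bullet}$ and carrying along the extra target slot $\tau_{k+1}^\perp=G^{\beta_k}$. The backbone is the direct sum decomposition
\[ E \;=\; \bigoplus_{i=1}^k L_i \;\oplus\; \bigoplus_{i=1}^k L_i^\perp \;\oplus\; G^{\beta_k}, \]
together with its truncations $G^{\beta_i}=\bigoplus_{j>i}L_j\oplus\bigoplus_{j>i}L_j^\perp\oplus G^{\beta_k}$, both obtained by iterating $G^{\beta_{j-1}}=G^{\beta_j}\oplus F_{\beta_j}^{\beta_{j-1}}$ and $F_{\beta_j}^{\beta_{j-1}}=L_j\oplus L_j^\perp$. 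First I would check $\Phi^*$ is well defined: since $A_jv_j\in\sum_{l>j}L_l^\perp+G^{\beta_k}$ has zero component along every $L_l$, the projection of $E$ onto $\bigoplus_i L_i$ carries $\sum_i\Gamma_{A_i}$ isomorphically onto $\bigoplus_i L_i$, so the $k$ lines $\Gamma_{A_i}$ are in direct sum. The same bookkeeping shows that if $v=\sum_j(v_j+A_jv_j)\in G^{\beta_i}$ then $v_1=\dots=v_i=0$, hence
\[ \Phi^*\bigl((L_j,A_j)_j\bigr)\cap G^{\beta_i}\;=\;\sum_{j>i}\Gamma_{A_j},\]
a space of dimension $k-i$; this gives $\Imag\Phi^*\subset V^*_\beta$, and applying $P_{F_{\beta_i}^{\beta_{i-1}}}$ to $L\cap G^{\beta_{i-1}}=\sum_{j\geq i}\Gamma_{A_j}$ (all terms with $j>i$ die, being in $G^{\beta_i}=\ker P_{F_{\beta_i}^{\beta_{i-1}}}$) yields $L_i=P_{F_{\beta_i}^{\beta_{i-1}}}(L\cap G^{\beta_{i-1}})$, i.e. $\Phi^*$ commutes with the projections to $\bP$.

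For injectivity I would argue as in Theorem \ref{thm1}: the formula $L_i=P_{F_{\beta_i}^{\beta_{i-1}}}(L\cap G^{\beta_{i-1}})$ recovers the lines from $L=\Phi^*((L_j,A_j)_j)$, so two preimages share the same $(L_i)$; writing $\sum_j(v_j+A_jv_j)=\sum_j(v_j'+A_j'v_j')$ and separating the $\bigoplus_iL_i$–part gives $v_j=v_j'$, after which, for each $l$, the $L_l^\perp$–component (resp.\ the $G^{\beta_k}$–component for $l=k+1$) reads $\sum_{j<l}A_j^lv_j=\sum_{j<l}(A_j')^lv_j$ for all $v_j\in L_j$; as the $v_j$ vary freely this decouples into $A_j^l=(A_j')^l$, whence $A_j=A_j'$.

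The real work is surjectivity onto $V^*_\beta$, and I expect that to be the main obstacle. Given $L\in V^*_\beta$, the chain $L=L\cap G^{\beta_0}\supsetneq L\cap G^{\beta_1}\supsetneq\dots\supsetneq L\cap G^{\beta_k}=\{0\}$ has one–dimensional quotients, so by downward induction on $i$ (from $i=k$ to $i=0$) I would produce maps $A_i':L_i\to G^{\beta_i}$ with $L\cap G^{\beta_i}=\sum_{j>i}\Gamma_{A_j'}$: choosing a line $L'\subset L\cap G^{\beta_{i-1}}$ complementary to $L\cap G^{\beta_i}$, the projection $P_{F_{\beta_i}^{\beta_{i-1}}}$ is injective on $L'$ with image $L_i$, so $L'=\Gamma_{A_i'}$ for a unique $A_i':L_i\to G^{\beta_i}$; at $i=0$ this gives $L=\sum_j\Gamma_{A_j'}$. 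It then remains to \emph{straighten} each $A_j'$: decompose $G^{\beta_j}=\bigoplus_{l>j}L_l\oplus\bigl(\sum_{l>j}L_l^\perp+G^{\beta_k}\bigr)$, write $A_j'=B_j+C_j$ accordingly, and absorb the $B_j$ into the remaining graphs. Concretely, requiring $v=\sum_j(v_j+A_jv_j)$ with $A_j$ valued in $\sum_{l>j}L_l^\perp+G^{\beta_k}$ translates on the $L_l$–components into the unipotent lower–triangular system $v_l=v_l'+\sum_{j<l}B_j^lv_j'$; inverting it expresses each $v_j'$ polynomially in $v_1,\dots,v_j$, and substituting back into $\sum_jC_jv_j'$ exhibits it as $\sum_jA_jv_j$ with each $A_j:L_j\to\sum_{l>j}L_l^\perp+G^{\beta_k}=\sum_{l=j+1}^{k+1}L_l^\perp$ a linear combination of $C_j$ and of the components of the $B_j$. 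Hence $L=\sum_j\Gamma_{A_j}\in\Imag\Phi^*$. This bookkeeping is routine but needs care because the last slot $L_{k+1}^\perp=G^{\beta_k}$ is the only target piece that is not a line; verifying that $A_k:L_k\to G^{\beta_k}$ and that the triangular system never involves that slot keeps the computation clean.

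Finally, for (ii): $\Phi^*$ is an algebraic, fibrewise–affine morphism over $\bP$ that is a bijection onto $V^*_\beta$, and the reconstructions of $(L_i)$ and of $(A_i)$ above are given by the manifestly smooth (rational) formulas just described, so $\Phi^*$ has a smooth inverse; being a diffeomorphism that commutes with the two projections to $\bP$ and restricts to a diffeomorphism on each fibre, it is a fibre bundle diffeomorphism over $\bP$. I would also note that Theorem \ref{thm2} can alternatively be deduced from Theorem \ref{thm1} applied to the opposite flag $H_\bullet:=G^{n-\bullet}$ and the index sequence $\delta_i:=n-\beta_{k-i}$, under which $\widehat{V^*_\beta}$ becomes a regular Schubert variety; but since that identification fibres $V^*_\beta$ over the different product $\prod_i\bP(H_{\delta_i}^{\delta_{i-1}})$ rather than over $\bP=\prod_i\bP(F_{\beta_i}^{\beta_{i-1}})$, the direct argument above is the one that proves the statement as formulated.
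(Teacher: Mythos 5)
Your proposal is correct and follows essentially the same route as the paper: the paper's own proof of Theorem~\ref{thm2} simply says ``the map is well-defined and the rest follow closely the proof of Theorem~\ref{thm1}'' (giving only the sample computation $\left(\sum_{j=1}^k\Gamma_{A_j}\right)\cap G^{\beta_i}=\sum_{j=i+1}^k\Gamma_{A_j}$), and you have carried out exactly that adaptation in detail, using the same decomposition $G^{\beta_j}=\bigoplus_{l>j}L_l\oplus\bigl(\sum_{l>j}L_l^{\perp}+G^{\beta_k}\bigr)$ and the same triangular straightening that Theorem~\ref{thm1} uses with the increasing flag. Your alternative reduction via the opposite flag $H_*=G^{n-*}$ is also consistent with the paper's subsequent remark about $\widehat{V_{\beta}^*}$ fibering over $\bP'$.
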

\begin{proof} The map is well-defined and the rest follow closely the proof of Theorem \ref{thm1}. For example
\[\left(\sum_{j=1}^k\Gamma_{A_j}\right)\cap G^{\beta_i}=\sum_{j=i+1}^k\Gamma_{A_i},\quad i\leq k-1
\]
while for $i=k$, $\left(\sum_{j=1}^k\Gamma_{A_j}\right)\cap G^{\beta_k}=\{0\}$. Hence the image lands in $V_{\beta}^*$.
\end{proof}
\begin{rem} The relation between $V_{\beta}^*$ and $\widehat{V_{\beta}^*}$ from  (\ref{eq6}) can be understood as follows. Notice that $\widehat{V_{\beta}^*}$ fibers over
\[\bP'=\bP(F_{\beta_1})\times \ldots \bP(F_{\beta_{k-1}}^{\beta_{k-2}})\times \bP(G^{\beta_{k-1}})
\] as a direct application of Theorem \ref{thm1}. Note that $V_{\beta}^*$ is the open subset in $\widehat{V_{\beta}^*}$ which is the preimage via the projection onto $\bP'$ of the open set 
\[(\bP')^{\circ}=\{(\ell_1,\ldots,\ell_k)~|~\ell_k\cap G^{\beta_k}=\{0\}\}
\]
The  set $\bP(G^{\beta_{k-1}})^{\circ}=\{\ell_k~|~\ell_k\cap  G^{\beta_k}=\{0\}\}$ fibers over $\bP(F_{\beta_k}^{\beta_{k-1}})$ via $\ell_k\ra \bP_{F_{\beta_k}^{\beta_{k-1}}}(\ell_k)$. The projection map to $\bP$ is the composition of the projection to $\bP'$ followed by the projection $\bP(G^{\beta_{k-1}})^{\circ}\ra \bP(F_{\beta_k}^{\beta_{k-1}})$. 
\end{rem}
\begin{prop} The Schubert varieties $V_{\beta}$ and $V_{\beta}^*$ intersect transversely along $\bP$. This stays true about the varieties $\overline{V_{\beta}}$ and $\overline{V_{\beta}^*}$.
\end{prop}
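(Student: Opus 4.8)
The plan is to reduce the whole statement to a pointwise tangent-space computation at the points of $\bP$ and to perform that computation using the explicit bundle parametrizations $\Phi$ and $\Phi^*$. Fix $L=\bigoplus_{i=1}^kL_i$ in the image of $\bP\ra\Gr_k(E)$; since $\Phi$ and $\Phi^*$ both restrict to the \emph{same} embedding $\bP\ra\Gr_k(E)$ on the respective zero sections, $L$ lies in $V_\beta\cap V_\beta^*$. Putting $L_{k+1}:=\{0\}$ and $L_{k+1}^\perp:=G^{\beta_k}$, the identities $\bigoplus_{j=1}^k(L_j\oplus L_j^\perp)=\bigoplus_{j=1}^kF_{\beta_j}^{\beta_{j-1}}=F_{\beta_k}$ and $F_{\beta_k}\oplus G^{\beta_k}=E$ give a direct-sum decomposition $E=L\oplus M$ with $M=\bigoplus_{j=1}^{k+1}L_j^\perp$. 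Identifying $T_L\Gr_k(E)=\Hom(L,E/L)=\Hom(L,M)$, I would record the block decomposition $T_L\Gr_k(E)=\bigoplus_{i=1}^k\bigoplus_{j=1}^{k+1}\Hom(L_i,L_j^\perp)$ and observe that the tangent space to $\bP\subset\Gr_k(E)$ at $L$ is the sum of the diagonal blocks $\bigoplus_{i=1}^k\Hom(L_i,L_i^\perp)$ (a tangent vector to $\bP(F_{\beta_i}^{\beta_{i-1}})$ at $L_i$ is a map $L_i\ra F_{\beta_i}^{\beta_{i-1}}/L_i\cong L_i^\perp$, and these are literal direct summands of $M$).

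Next I would compute $T_LV_\beta$ from the diffeomorphism $\Phi:\bH\ra V_\beta$ of Theorem \ref{thm1}. At the point of the zero section over $(L_i)$ the domain splits canonically as $T_{(L_i)}\bP$ plus the fibre $\prod_{i=2}^k\Hom(L_i,\sum_{j<i}L_j^\perp)$; $d\Phi$ carries the first summand onto the diagonal blocks, while an element $(A_i)_{i=2}^k$ of the fibre, being the derivative at $t=0$ of $t\mapsto\bigoplus_i\Gamma_{tA_i}$, produces the homomorphism $v_i\mapsto A_iv_i\bmod L$ with $A_iv_i\in\sum_{j<i}L_j^\perp$, hence lands in the strictly lower-triangular blocks (those $\Hom(L_i,L_j^\perp)$ with $j<i$). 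As $\Phi$ is a diffeomorphism onto $V_\beta$ its differential is injective, so $T_LV_\beta=\bigoplus_{1\le j\le i\le k}\Hom(L_i,L_j^\perp)$. Running the identical argument with $\Phi^*:\bH^*\ra V_\beta^*$ of Theorem \ref{thm2}, whose fibre is $\prod_{i=1}^k\Hom(L_i,\sum_{j=i+1}^{k+1}L_j^\perp)$, identifies $T_LV_\beta^*$ with the diagonal together with the strictly upper-triangular blocks, now with $j$ running up to $k+1$; that is, $T_LV_\beta^*=\bigoplus_{1\le i\le j\le k+1,\ i\le k}\Hom(L_i,L_j^\perp)$.

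Adding the two descriptions produces every block, so $T_LV_\beta+T_LV_\beta^*=T_L\Gr_k(E)$, which is the asserted transversality; the tangent spaces moreover meet exactly in $\bigoplus_{i=1}^k\Hom(L_i,L_i^\perp)=T_L\bP$, so by the implicit function theorem (combined with $\bP\subset V_\beta\cap V_\beta^*$ and the dimension count $\dim V_\beta+\dim V_\beta^*-\dim\Gr_k(E)=\dim\bP$) the intersection $V_\beta\cap V_\beta^*$ coincides with $\bP$ near $L$. For the last sentence I would note that $V_\beta$ is the open subset of $\overline{V_\beta}$ cut out by the open condition $\dim(L\cap F_{\beta_i})\le i$ and contains $\bP$, so $\overline{V_\beta}$ is smooth along $\bP$ and agrees there with $V_\beta$; likewise for $\overline{V_\beta^*}$ and $V_\beta^*$. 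Since transversality is a condition only at the points of $\bP$, it transfers verbatim to the closures. The step I expect to require the most care is the block bookkeeping in the two tangent computations — verifying that the base directions and the fibre directions of $\bH$ (resp.\ $\bH^*$) land in complementary, triangularly disjoint families of $\Hom(L_i,L_j^\perp)$-blocks, and that each sum $\sum_j L_j^\perp$ that occurs injects into $E/L$ onto the correct summands of $M$ — but all of this is immediate from the decomposition $E=\bigoplus_j F_{\beta_j}^{\beta_{j-1}}\oplus G^{\beta_k}$ together with the explicit formulas for $\Phi$ and $\Phi^*$.
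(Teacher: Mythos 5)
Your argument is correct and takes essentially the same approach as the paper: both work at a point $L=\bigoplus_i L_i\in\bP$ with the decomposition $E=L\oplus\bigoplus_{j=1}^{k+1}L_j^{\perp}$ and exploit the resulting block structure on $\Hom(L,E/L)$, with $V_\beta$ supplying the lower-triangular blocks and $V_\beta^*$ the strictly upper-triangular ones. The paper gets the block description set-theoretically in the chart $W=\Hom(L,\sum_j L_j^{\perp}+G^{\beta_k})$ (showing $\Gamma_T\in V_\beta$ forces the upper-triangular components $C_i$ to vanish and $\Gamma_T\in V_\beta^*$ forces the lower-triangular $B_i$ to vanish), whereas you obtain it by linearizing $\Phi$ and $\Phi^*$ along the zero section, which is the same computation to first order.
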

\begin{proof} This is standard. One way to see it is to notice that the Schubert varieties $V_{\beta}$ and $V_{\beta}^*$ are the stable and unstable manifolds of the critical manifold $\bP$ for  a certain Morse-Bott function on $\Gr_k(E)$. We  give  a self-contained proof. Take $(L_1,\ldots, L_k)\in \bP$ and use the following chart\footnote{The word chart in this context is used as a shorthand  for the map between a vector space of morphisms and an open subset of the Grassmannian, flag manifold etc. which takes a morphism to its graph.} of $\Gr_k(E)$:
\[ W=\Hom\left(\sum_{i=1}^kL_i,\sum_{i=1}^{k}L_i^{\perp}+G^{\beta_k}\right)
\]
for some  complements $L_i^{\perp}$ in $F_{\beta_i}^{\beta_{i-1}}$, $i=1,\ldots k$. Let $T\in W$ and denote by $B_i:L_i\ra \sum_{j=1}^{i-1}L_j^{\perp}$, $i\geq 2$,  $A_i:L_i\ra L_i^{\perp}$, $C_i:L_i\ra\sum_{j=i+1}^{k+1}L_j^{\perp}$, $1\leq i\leq k$ the components of $T$ where $L_{k+1}^{\perp}:=G^{\beta_k}$. Notice that $T\in W$ implies that $\dim{\Gamma_T\cap F_{\beta_i}}\leq i$ since for every $i$, $\Gamma_T$ will not intersect $\sum_{j=1}^iL_j^{\perp}$ which is a codimension $i$ subspace of $F_{\beta_i}$.

Then $\Gamma_T\in V_{\beta}$ implies that $C_i\equiv 0$ for all $1\leq i\leq k$. This is done by induction on $i$. Similarly $\dim{\Gamma_T\cap G^{\beta_i}}\leq k-i$ and $\Gamma_T\in V_{\beta}^*$ implies that $B_i\equiv 0$.

Hence in the chart $W$, $V_{\beta}$ intersects $V_{\beta}^*$ only for those $T$ for which $B_i\equiv 0$ and $C_i\equiv 0$. This means that $\Gamma_T\in \bP$ and the intersection is transversal. Since we already know from the proofs of Theorems \ref{thm1} and \ref{thm2} that $V_{\beta}$ and $V_{\beta}^*$ can be covered with charts of type $W$ when we vary $(L_1,\ldots, L_k)$ we get the first claim. 

The second claim follows from the first claim combined with the fact that $\dim{L\cap F_{\beta_i}}>i$ implies that $\dim{L\cap G^{\beta_i}}<k-i$ for any $i$. Hence $\overline{V_{\beta}}\cap\overline{V_{\beta}^*}=V_{\beta}\cap V_{\beta}^*$.
\end{proof}
\begin{rem} The identification of the stable and unstable manifolds with concrete bundles as in Theorems \ref{thm1}, \ref{thm2} only appears in the particular case of $k=2$, $\beta_2 = n$ in \cite{HL}.
\end{rem}

\section{The normal directions}
From the description of $V_{\beta}$ and $V_{\beta}^*$ as vector bundles over $\bP$ and as varieties that intersect transversally we see that a candidate for the normal bundle of $V_{\beta}$ is $\pi_1^*\bH^*$ where $\pi_1 : V_{\beta} \ra \bP$ is the projection described in the previous section.

In order to get an embedded resolution of $\overline{V}_{\beta}$ we will use a fiber $\bH_0^*$ of $\bH^*\ra \bP$ and deform the flag which defines $V_{\beta}$, $F_{\beta}: ~0\subset F_{\beta_1}\subset \ldots \subset F_{\beta_k}\subset E$ in the "directions" of this fiber producing thus an $k(n-k)-\sum_{i=1}^k (\beta_i-i)$
family of flags of the same type each of which rendering a Schubert variety of the same type as ${V_{\beta}}$. The union of all these Schubert varieties covers an open dense set of $\Gr_k(E)$. Next we will compactify the fiber $\bH^*_0$ but in the space of flags and finally we will desingularize this compactification.

We fix once and for all the lines $L_i\subset F_{\beta_i}^{\beta_{i-1}}$ and complements for them
\[ L_i\oplus L_i^{\perp}=F_{\beta_i}^{\beta_{i-1}}
\]
Let 
\[ \bH^*_0:=\prod_{i=1}^k\Hom\left(L_i,\sum_{j=i+1}^{k+1}L_j^{\perp}\right)
\]
where $L_{k+1}:=G^{\beta_k}$. The \emph{vector space} $\bH_0^*$ has dimension equal to the codimension of $V_{\beta}$.

For $1\leq \gamma_1 <\gamma_2<\ldots<\gamma_k \leq n$ let
\[{\Fl}_{\gamma_1,\ldots,\gamma_k}(E):=\{0\subset F_{\gamma_1}' \subset\ldots\subset F_{\gamma_k}'\subset E,\quad \dim{F_{\gamma_i}'} =\gamma_i\} \]
be the space of flags of length $k$ and dimensions $\gamma_1,\ldots \gamma_k$. 

Let
 \[\Psi:\bH_0^*\ra {\Fl}_{\beta_1,\ldots,\beta_k}(E),\qquad (A_1,\ldots,A_k)\ra \left(\ldots,\sum_{j=1}^i\Gamma_{A_j}+\sum_{j=1}^iL_j^{\perp},\ldots \right)
 \]
 Notice that
 \begin{itemize}
 \item[(i)]  $\Gamma_{A_i}\not\subset \sum_{j=1}^{i-1}\Gamma_{A_j}+\sum_{j=1}^{i}(L_j^{\perp})$ since $\Gamma_{A_i}\subset L_i\oplus \sum_{j=i+1}^{k+1}(L_j^{\perp})$ and therefore the spaces in $\Imag \Psi$ have indeed dimensions $\beta_1,\ldots,\beta_k$.
 \item[(ii)] $\Psi(0)=( F_{\beta_1}, F_{\beta_2},\ldots F_{\beta_k})$;
 \item[(iii)] $\Psi$ is an embedding.
 \end{itemize}

Consider next the set
\[\mathcal{F}(E)\subset {\Fl}_{1,2,\ldots, k}(E)\times {\Fl}_{\beta_1,\ldots, \beta_k}(E),\quad \mathcal{F}(E)=\{(V_*,F_*')~|~V_i\subset F_{\beta_i}'\}
\]
It is standard that $\mathcal{F}(E)$ is a complex manifold and the projection onto the second coordinate induces the structure of a fiber bundle. The fiber of this fiber bundle over $F_*'=F_*$  when is projected onto $\Gr_k(E)$ via $V_*\ra V_k$ is the well-known Kempf-Laksov resolution of $\overline{V_{\beta}}$. In particular, it has dimension equal to $\dim{V_{\beta}}$.

The complex manifold 
\[\Psi^*\mathcal{F}(E)=\left\{(A_*,V_*)\in\bH_0^*\times{\Fl}_{1,2,\ldots,k}(E)~|~V_i\subset \sum_{j=1}^i\left(\Gamma_{A_j}+L_j^{\perp}\right)\right\}
\]
satisfies the following property.
\begin{theorem} \label{thm41}The map
\begin{equation}\label{eq7} \Psi^*\mathcal{F}(E)\ra \Gr_k(E),\qquad (A_*,V_*)\ra V_k
\end{equation}
contains in its image the chart $W=\Hom\left(\sum_{i=1}^k L_i,\sum_{i=1}^{k+1}L_i^{\perp}\right)$, an open dense set of $\Gr_k(E)$. Moreover the restriction of this map to $[\Psi(0)]^*\mathcal{F}(E)$ is the Kempf-Laksov resolution of $\overline{V_{\beta}}$.
\end{theorem}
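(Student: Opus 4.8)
The plan is to verify the two assertions separately. The second one is a matter of unwinding definitions: since the zero map has graph $\Gamma_{0}=L_j$, one has $\sum_{j=1}^i(\Gamma_{0}+L_j^\perp)=\sum_{j=1}^i(L_j+L_j^\perp)=\sum_{j=1}^iF_{\beta_j}^{\beta_{j-1}}=F_{\beta_i}$, the last equality by telescoping $F_{\beta_j}=F_{\beta_{j-1}}\oplus F_{\beta_j}^{\beta_{j-1}}$. Hence $\Psi(0)=(F_{\beta_1},\ldots,F_{\beta_k})$, so $[\Psi(0)]^*\mathcal F(E)$ is the fiber of $\mathcal F(E)$ over $(F_{\beta_1},\ldots,F_{\beta_k})$, i.e. $\{V_*\in\Fl_{1,2,\ldots,k}(E)\mid V_i\subset F_{\beta_i}\}$ (equivalently, the defining incidence of $\Psi^*\mathcal F(E)$ specializes at $A_*=0$ to $V_i\subset F_{\beta_i}$), and the map $(0,V_*)\mapsto V_k$ is by definition the Kempf--Laksov resolution $\widehat{V_\beta}(F)$ of $\overline{V_\beta}$ from (\ref{KL1}).

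For the first assertion I would first record the direct sum decomposition $E=\bigoplus_{i=1}^kL_i\oplus\bigoplus_{j=1}^{k+1}L_j^\perp$, coming from $F_{\beta_i}^{\beta_{i-1}}=L_i\oplus L_i^\perp$, from $F_{\beta_k}=\bigoplus_{i=1}^kF_{\beta_i}^{\beta_{i-1}}$, and from $E=F_{\beta_k}\oplus G^{\beta_k}$; the two summands have dimensions $k$ and $n-k$, so $W$ parametrizes exactly the $k$-planes of $E$ transverse to $\bigoplus_{j=1}^{k+1}L_j^\perp$, which is the big cell of $\Gr_k(E)$ for this splitting and in particular open and dense. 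Now fix $T\in W$, write $T_i:=T|_{L_i}$ and decompose $T_i=T_i'+T_i''$ along $\bigoplus_{j=1}^{k+1}L_j^\perp=\bigoplus_{j=1}^iL_j^\perp\oplus\bigoplus_{j=i+1}^{k+1}L_j^\perp$. Set $A_i:=T_i''$, so that $A_*:=(A_1,\ldots,A_k)\in\bH_0^*$, and set $V_i:=\{x+Tx\mid x\in\bigoplus_{j=1}^iL_j\}$; then $V_1\subset\ldots\subset V_k$ is a genuine flag with $\dim V_i=i$ and $V_k=\Gamma_T$.

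It remains to check that $(A_*,V_*)\in\Psi^*\mathcal F(E)$, i.e. $V_i\subset\sum_{j=1}^i(\Gamma_{A_j}+L_j^\perp)$ for all $i$. Writing $x=\sum_{j=1}^ix_j$ with $x_j\in L_j$,
\[
x+Tx=\sum_{j=1}^i\bigl(x_j+A_jx_j\bigr)+\sum_{j=1}^iT_j'x_j,
\]
where $x_j+A_jx_j\in\Gamma_{A_j}$ and $T_j'x_j\in\bigoplus_{p=1}^jL_p^\perp\subseteq\bigoplus_{p=1}^iL_p^\perp$ because $j\le i$; hence $x+Tx\in\sum_{j=1}^i\Gamma_{A_j}+\sum_{j=1}^iL_j^\perp$, as required, and (\ref{eq7}) sends $(A_*,V_*)$ to $V_k=\Gamma_T$. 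Letting $T$ range over $W$ shows the image of (\ref{eq7}) contains the chart. I expect the only real obstacle to be exactly this last incidence verification, which hinges on the \emph{lower-triangular} fact that the part $T_i'$ of $T_i$ below the diagonal block lands inside $\bigoplus_{p\le i}L_p^\perp$; everything else is bookkeeping with the fixed splitting of $E$ and with the definitions of $\bH_0^*$, $\Psi$ and $\mathcal F(E)$.
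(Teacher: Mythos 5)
Your proof is correct and follows essentially the same route as the paper: the second assertion is unwound from $\Psi(0)=(F_{\beta_1},\ldots,F_{\beta_k})$, and for the first you recover exactly the candidate preimage the paper identifies (your $A_i=T_i''$ is the paper's $B_i=P(T|_{L_i})$, and your $V_i=\{x+Tx:x\in\bigoplus_{j\le i}L_j\}$ coincides with the paper's $\Gamma_T\cap F_{\beta_i}'$). The paper's sketch additionally shows this preimage is \emph{unique} via the inequality $\dim(\Gamma_T\cap F_{\beta_i}')\le i$, which the theorem statement does not require; your version, by contrast, verifies the existence half explicitly (the ``lower-triangular'' check $T_j'x_j\in\bigoplus_{p\le j}L_p^{\perp}$), which the paper leaves to the reader, so the two are complementary fillings of the same argument.
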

\begin{proof} The last statement is obvious. We will only sketch the proof of the first statement.

 Notice that if $V_k$ is in the image of (\ref{eq7}) then  there exist a flag $F_{\beta_i}':=\sum_{j=1}^i(\Gamma_{A_j}+L_j^{\perp})\in\Imag \Psi$   such that $\dim{V_k\cap F_{\beta_i}'}\geq i$ for all $i$. On the other hand, if $T\in W$  then $\dim{\Gamma_T\cap F_{\beta_i}'}\leq i$ as $\sum_{j=1}^iL_j^{\perp}$ has codimension $i$ in $F_{\beta_i}'$ and $\Gamma_T\cap \sum_{j=1}^iL_j^{\perp}=\{0\}$. 

One then proves (by induction on $i$ and separation into relevant components) that the conditions $\dim{\Gamma_T\cap F_{\beta_i}'}=i$ imply that $A_i=B_i$ where $B_i=P(T\bigr|_{L_i})$, $P$ standing for the projection onto $\sum_{j=i+1}^{k+1}L_j^{\perp}$.

Then define $V_i:=\Gamma_T\cap F_{\beta_i}'$.

Hence if   $T\in W$   then there exists a \emph{unique} flag $F_{\beta}'\in\Imag \Psi$ and unique $V_*$ such that $(A_*,V_*)\ra \Gamma_T$.
\end{proof}
\begin{rem} The image of the map (\ref{eq7}) is definitely not contained in the open set $W$, since it contains $\overline{V_{\beta}}$ and  we know that $\overline{V_{\beta}}\cap W\subset V_{\beta}$.
\end{rem}
\begin{rem}
Since $\bH^*_0$ is not compact we need to look for a compactification in order to build the embedded resolution. The naive approach of taking
\[\prod_{i=1}^k\bP\left(L_i+\sum_{j={i+1}}^{k+1}L_j^{\perp}\right)
\]
does not work since the map $\Psi$ does not extend to this compactification.
\end{rem}

\section{The compactification of $\bH^*_0$}
We use the same notation as in the previous section. The main observation that leads to the compactification of $\bH^*_0$ is the following quite trivial relation. For $j\leq i$
\begin{equation}\label{eq8}\Gamma_{A_j}+\sum_{p=1}^{i}L_p^{\perp}=\Gamma_{A_j^i}+\sum_{p=1}^iL_p^{\perp}
\end{equation}
where $A_j^i$ is the composition of $A_j:L_j\ra \sum_{p=j+1}^{k+1}L_p^{\perp}$ with the natural projection 
\[   \sum_{p=j+1}^{k+1}L_p^{\perp}\ra\sum_{p=i+1}^{k+1}L_p^{\perp} 
\]

In particular, $A_i^i=A_i$. 

We conclude from (\ref{eq8}) that
\[F_{\beta_i}':=\sum_{p=1}^i\Gamma_{A_i}+\sum_{p=1}^iL_p^{\perp}=\sum_{p=1}^i\Gamma_{A_p^i}+\sum_{p=1}^iL_p^{\perp}.
\]
Hence we can define
\begin{equation}\label{eq56} B_i:\sum_{j=1}^iL_j\ra\sum_{j=i+1}^{k+1}L_j^{\perp},\qquad B_i\bigr|_{L_j}:=A_j^i
\end{equation}
and we obtain that
\begin{equation}\label{eq10} \Gamma_{B_i}+\sum_{p=1}^iL_p^{\perp}=F_{\beta_i}'.
\end{equation}

\begin{lem} \label{lem1} The following relation holds
\[ \Gamma_{B_{i}}\subset \Gamma_{B_{i+1}}+L_{i+1}^{\perp}
\]
\end{lem}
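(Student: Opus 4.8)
The plan is to unwind the definitions of $B_i$ and $B_{i+1}$ and show that an arbitrary element of $\Gamma_{B_i}$ lands in $\Gamma_{B_{i+1}}+L_{i+1}^\perp$. Recall from (\ref{eq56}) that $B_i\colon\sum_{j=1}^iL_j\to\sum_{j=i+1}^{k+1}L_j^\perp$ is the linear map whose restriction to each $L_j$ (for $j\le i$) is $A_j^i$, the composition of $A_j$ with the projection $\sum_{p=j+1}^{k+1}L_p^\perp\to\sum_{p=i+1}^{k+1}L_p^\perp$. The key compatibility is that $A_j^i$ and $A_j^{i+1}$ differ only by a further projection killing the $L_{i+1}^\perp$-component: precisely, writing $\pi\colon\sum_{p=i+1}^{k+1}L_p^\perp\to\sum_{p=i+2}^{k+1}L_p^\perp$ for the projection along $L_{i+1}^\perp$, we have $A_j^{i+1}=\pi\circ A_j^i$ for all $j\le i$. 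Hence for any $v\in\sum_{j=1}^iL_j$,
\[
B_i(v)-B_{i+1}(v)\in L_{i+1}^\perp,
\]
since $B_{i+1}(v)=\pi(B_i(v))$ and $B_i(v)-\pi(B_i(v))$ is exactly the $L_{i+1}^\perp$-component of $B_i(v)$.

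From this, take a general element of $\Gamma_{B_i}$, namely $v+B_i(v)$ with $v\in\sum_{j=1}^iL_j\subset\sum_{j=1}^{i+1}L_j$. Then
\[
v+B_i(v)=\bigl(v+B_{i+1}(v)\bigr)+\bigl(B_i(v)-B_{i+1}(v)\bigr),
\]
where the first summand lies in $\Gamma_{B_{i+1}}$ (as $v$ is also a legitimate input for $B_{i+1}$, whose domain is the larger space $\sum_{j=1}^{i+1}L_j$) and the second lies in $L_{i+1}^\perp$ by the displayed containment above. This gives $\Gamma_{B_i}\subset\Gamma_{B_{i+1}}+L_{i+1}^\perp$, as required.

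The only point needing a little care — and the closest thing to an obstacle — is verifying the identity $A_j^{i+1}=\pi\circ A_j^i$ cleanly from the definition of $A_j^i$ as "the composition of $A_j$ with the natural projection $\sum_{p=j+1}^{k+1}L_p^\perp\to\sum_{p=i+1}^{k+1}L_p^\perp$". Since these projections are all along coordinate subspaces in the fixed internal direct sum decomposition, the projection $\sum_{p=j+1}^{k+1}L_p^\perp\to\sum_{p=i+2}^{k+1}L_p^\perp$ factors through $\sum_{p=j+1}^{k+1}L_p^\perp\to\sum_{p=i+1}^{k+1}L_p^\perp$ followed by $\pi$; this is just transitivity of projections onto nested coordinate subspaces, and it yields the identity immediately. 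Everything else is formal manipulation of graphs, so no genuine difficulty remains.
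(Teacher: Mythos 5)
Your proof is correct and takes essentially the same approach as the paper: the key identity $A_j^{i+1}=\pi\circ A_j^i$ (equivalently, $A_j^i(v)=A_j^{i+1}(v)+A_j^{L_{i+1}^{\perp}}(v)$ as stated in the paper) is exactly the paper's one-line argument, which you have simply unpacked more explicitly.
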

\begin{proof} For $j\leq i$, use the relation $A_j^i(v)=A_{j}^{i+1}(v)+A_j^{L_{i+1}^{\perp}}(v)$ for all $v\in L_j$, where $A_j^{L_{i+1}^{\perp}}$ is the ${L_{i+1}^{\perp}}$ component of $A_j$.
\end{proof}
The $k$-tuple $(\Gamma_{B_1},\ldots,\Gamma_{B_k})$ together with the previous relations suggest introducing the following object.

First, for $1\leq j\leq i\leq k$ let
\begin{equation}\label{eq9} V_j^i:=\sum_{p=1}^jL_p+\sum_{p=i+1}^{k+1}L_p^{\perp}
\end{equation}
Let 
\[G:=\prod_{i=1}^k\Gr_i(V_i^i)\subset \prod_{i=1}^k\Gr_i(E)
\]
and
\[\mathcal{G}:=\{(\ell_1,\ldots,\ell_k)\in G~|~\ell_i\subset \ell_{i+1}+L_{i+1}^{\perp},~1\leq i\leq k-1\}.
\]

By Lemma \ref{lem1} we have that $(\Gamma_{B_1},\ldots,\Gamma_{B_k})\in\mathcal{G}$ hence we will be regarding $\bH_0^*$ as a subset. of $\mathcal{G}$ by letting $\ell_i:=\Gamma_{B_i}$.

Relation (\ref{eq10}) suggests the following extension of the map $\Psi$:
\begin{equation}\label{eq45} \widetilde{\Psi}: G\ra \Fl_{\beta_1,\ldots,\beta_k}(E),\quad \left((\ell_1,\ldots,\ell_k)\ra \left(\ell_1+L_1^{\perp},\ldots,\sum_{i=1}^k\ell_i+\sum_{i=1}^kL_i^{\perp}\right) \right)
\end{equation}
The map is well-defined because
\[V_i^i\cap \sum_{p=1}^iL_p^{\perp}=\{0\},\qquad \forall~i.
\]
We are really interested in $\widetilde{\Psi}\bigr|_{\mathcal{G}}$. But we  should get acquainted with $\mathcal{G}$ first. In general it is a singular variety as the following example shows.
\begin{example} Let $k=2$ and suppose $G^{\beta_2}\neq 0$. Then
 \[\mathcal{G}=\{(\ell_1,\ell_2)\in\bP(L_1+L_2^{\perp}+G^{\beta_2})\times \Gr_2(L_1+L_2+G^{\beta_2})~|~\ell_1\subset\ell_2+L_2^{\perp}\}\]
 Singularities appear when $\ell_1\subset L_2^{\perp}$ and $L_1\subset \ell_2\subset L_1+G^{\beta_2}$ for which the unique incidence relation is automatically satisfied. Take $\ell_2=L_1+\ell'$ with $\ell'\subset G^{\beta_2}$ one dimensional. A chart centered at $\ell_2\in\Gr_2(L_1+L_2+G^{\beta_2})$ can be taken to be:
 \[W_2:=\Hom(L_1\oplus \ell',L_2\oplus (\ell')^{\perp})\ni B=\left(\begin{array}{cc} B_1&B_2\\
 B_3 &B_4\end{array}\right)
 \]
 where $\ell'\oplus (\ell')^{\perp}=G^{\beta_2}$ while a chart at $\ell_1$ is $W_1:=\Hom(\ell_1,L_1\oplus \ell_1^{\perp}\oplus \ell'\oplus(\ell')^{\perp})$ where $\ell_1^{\perp}+\ell_1=L_2^{\perp}$. Let $A=(A_1,\ldots,A_4)\in W_1$. 
 
 Once one introduces bases, one can think of $A_1$, $A_3$, $B_1$, $B_2$ as numbers. 
 
 Then the incidence relation $\Gamma_A\subset \Gamma_B+L_2^{\perp}$ translates into the equalities
 \begin{equation} B_1A_1+B_2A_3\equiv 0
 \end{equation}
 \begin{equation}\label{eq413}A_4\equiv B_3A_1+B_4A_3.
 \end{equation}
 The relation $B_1A_1+B_2A_3\equiv 0$ is the equivalent of the hypersurface $xy+zw=0$ in $\bC^4$ and is the source of singularities in this particular example. 
 
\end{example}
\begin{prop} The set $\mathcal{G}$ is a compact algebraic subvariety of $G$ of dimension $\dim \bH_0^*$.
\end{prop}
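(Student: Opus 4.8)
The plan is to split the assertion into its structural pieces (closed/algebraic, compact, connected) and the dimension statement, and to reduce everything to a single density claim.

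\emph{Closed and algebraic.} Each incidence condition $\ell_i\subset\ell_{i+1}+L_{i+1}^{\perp}$ is Zariski closed on $G$. Here it helps to note that over $\Gr_{i+1}(V_{i+1}^{i+1})$ the subspace $\ell_{i+1}+L_{i+1}^{\perp}$ has constant dimension $i+1+\dim L_{i+1}^{\perp}$: indeed $V_{i+1}^{i+1}\cap L_{i+1}^{\perp}=\{0\}$, because $V_{i+1}^{i+1}=\bigoplus_{p\le i+1}L_p\oplus\bigoplus_{q\ge i+2}L_q^{\perp}$ and each summand meets $F_{\beta_{i+1}}^{\beta_i}\supset L_{i+1}^{\perp}$ trivially, by the nesting of $F_*$ and $G^*$. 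Thus $\tau_{i+1}\oplus\underline{L_{i+1}^{\perp}}$ is a constant rank subbundle of $\underline E$ and the condition is the vanishing of the bundle map $\tau_i\hookrightarrow\underline E\twoheadrightarrow\underline E/(\tau_{i+1}\oplus\underline{L_{i+1}^{\perp}})$. Hence $\mathcal{G}$ is a closed subvariety of the projective variety $G$, so it is a compact algebraic variety; it is moreover connected, as the forgetful maps discussed below are surjective with connected fibres onto $\Gr_k(V_k^k)$.

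\emph{The graph locus and reduction.} Put $\mathcal{G}^{\circ}:=\{(\ell_1,\dots,\ell_k)\in\mathcal{G}\mid \ell_i\cap\sum_{j=i+1}^{k+1}L_j^{\perp}=\{0\}\ \text{for all }i\}$; it is open in $\mathcal{G}$ (non-vanishing of maximal minors, using $V_i^i=\sum_{p\le i}L_p\oplus\sum_{p>i}L_p^{\perp}$) and nonempty, containing $(\sum_{p\le i}L_p)_{i}$. On $\mathcal{G}^{\circ}$ each $\ell_i$ is the graph $\Gamma_{C_i}$ of a unique $C_i\colon\sum_{p\le i}L_p\to\sum_{p>i}L_p^{\perp}$, and separating the $\sum L_p$- and $\sum L_q^{\perp}$-components of each relation $\Gamma_{C_i}\subset\Gamma_{C_{i+1}}+L_{i+1}^{\perp}$ forces $C_{i+1}|_{\sum_{p\le i}L_p}$ to be the projection of $C_i$ to $\sum_{p\ge i+2}L_p^{\perp}$; with $A_j:=C_j|_{L_j}$ one obtains by induction $C_i=B_i$ in the notation of (\ref{eq56}). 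So $(A_j)\mapsto(\Gamma_{B_j})$ is a biregular isomorphism $\bH_0^*\cong\mathcal{G}^{\circ}$. In particular $\mathcal{G}^{\circ}$ is a smooth, irreducible, open subvariety of $\mathcal{G}$ of dimension $\dim_{\bC}\bH_0^*$, so $\mathcal{G}$ is smooth along $\mathcal{G}^{\circ}$ and $\dim\mathcal{G}\ge\dim\bH_0^*$. The whole proposition now reduces to showing that $\mathcal{G}^{\circ}$ is \emph{dense} in $\mathcal{G}$, equivalently that every point of $\mathcal{G}$ is a limit of points of $\mathcal{G}^{\circ}$; this gives $\mathcal{G}=\overline{\mathcal{G}^{\circ}}$, hence irreducibility and $\dim\mathcal{G}=\dim\bH_0^*$ at once.

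\emph{Density: the main obstacle.} I would prove density by induction on the truncations $\mathcal{G}^{(j)}=\{(\ell_j,\dots,\ell_k):\ell_i\in\Gr_i(V_i^i),\ \ell_i\subset\ell_{i+1}+L_{i+1}^{\perp}\ (i\ge j)\}$, with $\mathcal{G}^{(k)}=\Gr_k(V_k^k)$ and $\mathcal{G}^{(1)}=\mathcal{G}$. The forgetful map $\pi_j\colon\mathcal{G}^{(j)}\to\mathcal{G}^{(j+1)}$ has fibre $\Gr_j\big((\ell_{j+1}+L_{j+1}^{\perp})\cap V_j^j\big)$, which is always a nonempty irreducible Grassmannian — nonempty because $(\ell_{j+1}+L_{j+1}^{\perp})\cap V_j^j\supset\ell_{j+1}\cap V_j^{j+1}$, of dimension $\ge j$ by the standard intersection estimate together with $V_{j+1}^{j+1}\cap V_j^j=V_j^{j+1}$ — and of \emph{constant} dimension $j+\dim L_{j+1}^{\perp}$ over the dense open locus $U_{j+1}\subset\mathcal{G}^{(j+1)}$ where $\ell_{j+1}$ is a graph, since there $(\ell_{j+1}+L_{j+1}^{\perp})\cap V_j^j=\Gamma_{C|_{\sum_{p\le j}L_p}}\oplus L_{j+1}^{\perp}$. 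So $\pi_j$ is a Grassmann bundle over $U_{j+1}$, $\pi_j^{-1}(U_{j+1})$ is irreducible, and one must show $\mathcal{G}^{(j)}$ has no further irreducible component over $\mathcal{G}^{(j+1)}\setminus U_{j+1}$, i.e. that every point of $\mathcal{G}^{(j)}$ lies in $\overline{\pi_j^{-1}(U_{j+1})}$. Concretely, given $(\ell_j,\ell_{j+1},\dots,\ell_k)$, one wants to approach $(\ell_{j+1},\dots,\ell_k)$ through $U_{j+1}$ (possible by induction) from a direction for which the limit $\lim_t\big((\ell_{j+1}(t)+L_{j+1}^{\perp})\cap V_j^j\big)$ — a $(j+\dim L_{j+1}^{\perp})$-dimensional subspace of $(\ell_{j+1}+L_{j+1}^{\perp})\cap V_j^j$ — still contains $\ell_j+L_{j+1}^{\perp}$, and then to lift to $\ell_j(t)\to\ell_j$. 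Securing this compatible choice of approach direction together with the lift — equivalently, controlling how the fibre dimension of $\pi_j$ jumps over $\mathcal{G}^{(j+1)}\setminus U_{j+1}$ — is the step I expect to be the real difficulty. Two remarks on alternatives: the $\bC^*$-action on $E$ of weight $0$ on $\bigoplus_pL_p$ and weight $1$ on $\bigoplus_qL_q^{\perp}$ preserves $\mathcal{G}$, restricts to the linear scaling on $\mathcal{G}^{\circ}\cong\bH_0^*$, and presents $\mathcal{G}^{\circ}$ as the attracting cell of the smooth fixed point $o=(\sum_{p\le i}L_p)_i$, all of whose tangent weights are positive, which reduces density to showing every other $\bC^*$-fixed point of $\mathcal{G}$ lies in $\overline{\mathcal{G}^{\circ}}$; and if one grants the later theorem that $\widehat{\mathcal G}\to\mathcal{G}$ is a resolution, both irreducibility and the dimension follow immediately, since $\widehat{\mathcal G}$ is an iterated tower of $\bP^1$- and Grassmann-bundles of total dimension $\dim\bH_0^*$ — at the price of reversing the logical order of the paper.
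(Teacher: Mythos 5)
Your argument establishes cleanly that $\mathcal{G}$ is a closed algebraic subvariety of the projective variety $G$ (hence compact algebraic), and that the graph locus $\mathcal{G}^{\circ}$ is open, nonempty, and biregular to $\bH_0^*$, which gives the lower bound $\dim\mathcal{G}\geq\dim\bH_0^*$. Up to this point your route is more direct than the paper's, which derives compactness and algebraicity from $\mathcal{G}=\pi(\mwG)$ together with Remmert and Chow.

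The gap you flag is real, and it is exactly where your proof stops: you reduce the remaining claim to density of $\mathcal{G}^{\circ}$ and attempt an induction on the truncations $\mathcal{G}^{(j)}$, but you never control the components of $\mathcal{G}^{(j)}$ sitting over the locus in $\mathcal{G}^{(j+1)}$ where the fibre dimension of $\pi_j$ jumps. Without that, the upper bound $\dim\mathcal{G}\leq\dim\bH_0^*$ (and hence the equality) is not established. Note also that, since the proposition asserts only a dimension, density is more than you need: an upper bound on $\dim\mathcal{G}$ together with your $\mathcal{G}^{\circ}$ already suffices.

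The decisive point is that you should not have discarded your last alternative. The paper itself proves this proposition by a forward reference to Theorem \ref{mth}: parts (a) and (b) there show that $\mathcal{G}$ is the image of the compact, connected (hence irreducible) manifold $\mwG$ of dimension $\dim\bH_0^*$ under the analytic projection $\pi$, which immediately gives both the upper bound and the irreducibility you were after, and renders the density question moot. This is not a logical circularity: the proof of Theorem \ref{mth} does not invoke the present proposition (it uses the open set $U$ only as a set), and the paper is explicit about the ordering. Combined with your $\mathcal{G}^{\circ}\cong\bH_0^*$ (or, as the paper does instead, the tower-of-Grassmann-bundles structure of $\mathcal{G}\cap U$), this closes the proof. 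In short: replace the density step by the citation to Theorem \ref{mth}, and the argument is complete.
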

\begin{proof} Theorem \ref{mth} below proves that $\mathcal{G}$ is the image of an analytic map defined on a compact complex manifold of dimension $\bH^*_0$. In particular $\mathcal{G}$ is a compact complex analytic variety of dimension at most $\dim{\bH^*_0}$. Since the ambient manifold is projective we get that $\mathcal{G}$ is algebraic.

We prove that it is of dimension exactly $\dim {\bH^*_0}$ by exhibiting an open dense set of regular points in this dimension. The open set is obtained as follows. Recall first the definition of $V_j^i$ in (\ref{eq9}). Then notice that for every fixed $i$ and every $1\leq j\leq k-i$ we have:
\begin{equation}\label{eq11}\ell_i\subset\left(\ell_{i+j}+\sum_{p=i+1}^{i+j}L_p^{\perp}\right)\cap V_i^i=\left(\ell_{i+j}\cap V_i^{i+j}\right)+\sum_{p=i+1}^{i+j}L_p^{\perp}
\end{equation}
Now, $V_{i}^{i+j}$ is a codimension $j$ space in $V^{i+j}_{i+j}$ and $\ell_{i+j}\subset V_{i+j}^{i+j}$ has dimension $i+j$ hence a condition of type
\[\dim{\ell_{i+j}\cap V_i^{i+j}}=i
\] 
defines an open set in $\Gr_{i+j}(V_{i+j}^{i+j})$.

Hence $U_a:=\{\ell\in \Gr_a(V_a^a)~|~\dim {\ell\cap V^a_i}=i,~\forall~i\leq a-1\}$ is open and non-empty as $\ell=\sum_{j=1}^aL_j\in U_a$  and therefore 
\begin{equation} \label{eq24}U:=\prod_{a=1}^k U_a ~\mbox{ is an open set in } G.\end{equation}

Relations (\ref{eq11}) imply that the intersection $\mathcal{G}\cap U$ can be described as the set $(\ell_1,\ldots \ell_k)\in G$ such that each $\ell_i$ with $i\leq k-1$ satisfies two conditions:
\begin{itemize}
\item[(i)] $\ell_i\in \Gr_i(\ell_{i+1}\cap V_i^{i+1}+L_{i+1}^{\perp})\subset \Gr_i(V_i^i)$;
\item[(ii)] $\ell_i\in U_i\subset \Gr_i(V_i^i)$.
\end{itemize}
For $i=k$ there is only one restriction namely (ii).   We justify now why the set
 \[\{(\ell_{k-1},\ell_k)~|~\ell_{k-1}\in U_{k-1}, ~\ell_k\in U_k, ~\ell_{k-1}\subset \ell_k+L_k^{\perp}\}\] 
is a fiber bundle over $U_k$. Notice that for $i<k-1$ the relation $\ell_{k-1}\subset \ell_{k}\cap V^k_{k-1}+L_k^{\perp}$  implies:
\begin{equation} \label{eq12} \ell_{k-1}\cap V_i^{k-1}=\ell_{k-1}\cap [\left(\ell_k\cap V^k_{k-1}+L_k^{\perp}\right)\cap V^{k-1}_i]=\ell_{k-1}\cap \left(\ell_k\cap V_i^k+L_k^{\perp}\right)
\end{equation}
Collectively (\ref{eq12}) say that when  $\ell_k\in U_k$ is fixed then putting together (i) and (ii) we get that $\ell_{k-1}\in U_{k-1}\cap \Gr_{k-1}(\ell_k\cap V^{k}_{k-1}+L_k^{\perp})$ is the open condition in $\Gr_{k-1}(\ell_k\cap V^{k}_{k-1}+L_k^{\perp})$ described by the following $k-2$ (open) relations
\[ i=\dim \ell_{k-1}\cap Z_i(\ell_k)\quad (=\dim{\ell_{k-1}\cap V^{k}_{i}}),\qquad i<k-1
\]
where $Z_i(\ell_k)=\ell_k\cap V^k_{i}+L_k^{\perp}$. The relations are open because $Z_i(\ell_k)$ has codimension $k-1-i$ inside $\ell_k\cap V^{k}_{k-1}+L_k^{\perp}$. In particular the fiber type (of the projection onto $U_k$) over $\ell_k\in U_k$ does not depend on $\ell_k$. We hence get the fiber bundle structure. 

The same reasoning can be applied inductively to show that $\{(\ell_i,\ldots, \ell_k)~|~\ell_j\in U_j, \ell_j\subset \ell_{j+1}+L_{j+1}^{\perp}\}$ is a fiber bundle over $\{(\ell_{i+1},\ldots, \ell_k)~|~\ell_i\in U_i, \ell_j\subset \ell_{j+1}+L_{j+1}^{\perp}\}$ and conclude that $\mathcal{G}\cap U$ is a tower of fiber bundles with fibers of type open sets in $\Gr_i(\ell_i^{i+1}\cap V_i^{i+1}+L_{i+1}^{\perp})$. 

The dimension is then, remembering that $\dim {\ell_{i+1}\cap V^{i+1}_i}=i$:
\[\dim \Gr_k(V^k_k)+\sum_{i=1}^{k-1}\dim \Gr_i(\ell_i^{i+1}\cap V_i^{i+1}+L_{i+1}^{\perp})=k(n-\beta_k)+\sum_{i=1}^{k-1}i(\beta_{i+1}-\beta_i-1)=\dim \bH_0^*.
\]
\end{proof}
\begin{rem} It is not enough for a construction of an open dense set of regular points in $\mathcal{G}$ to consider open sets in $\Gr_i{(V_i^i)}$ induced just by the relation $\dim \ell_{i+1}\cap V^{i+1}_i=i$. One  needs that $\dim{\ell_{i+1}\cap V^{i+1}_j}=j$ for all $j\leq i$.
\end{rem}

We now construct a resolution for $\mathcal{G}$ via a bioriented flag.  Let $\widehat{G}$ be the $k\times k$ product of spaces $S_{ij}$ where
\[ S_{ij}= \left\{\begin{array}{cc}
 \pt & \mbox{if } ~ j>i\\
  \Gr_j(V^i_j) &\mbox{if } ~j\leq i
\end{array}\right.
\]
Shortly, ignoring the redundancy of point spaces:
\[\widehat{G}=\prod_{1\leq j\leq i\leq n}\Gr_j(V^i_j)
\]

We denote an element of $\widehat{G}$ by $(\ell_j^i)_{i,j}$ with $\dim{\ell_j^i}=j$. Define
\[\mwG:=\{(\ell_j^i)_{i,j}\in\widehat{G}~|~(I.)~\ell_j^i\subset \ell_{j+1}^i,~j\leq k-1; (II.)~\ell_j^i\subset \ell_j^{i+1}+L_{i+1}^{\perp},~i\leq k-1\}
\]
\begin{theorem}\label{mth}\begin{itemize}
\item[(a)] The space $\mwG$ is a compact manifold of dimension $\bH_0^*$.
\item[(b)] The projection $\pi:\mwG\ra G$, $\pi((\ell_j^i)_{i,j})=(\ell_i^i)_{1\leq i\leq k}$ is a surjection onto $\mathcal{G}$.
\item[(c)] The projection $\pi:\mwG\ra G$ of item (b) is a resolution of $\mathcal{G}$.
\end{itemize}
\end{theorem}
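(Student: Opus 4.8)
The plan is to prove the three items in sequence, with item (a) being by far the main task. For item (a), I would imitate the strategy already used for $\Fl^w$ in Proposition~\ref{P11} and for $\hat S^w$ in Proposition~\ref{P12}: realize $\mwG$ as an iterated tower of fiber bundles whose fibers are themselves towers of Grassmannian (in fact projective) bundles, and hence manifolds of computable dimension. Concretely, I would project $\mwG$ onto the last column $i=k$, i.e.\ onto the data $(\ell_j^k)_{1\le j\le k}$, which by the relations $(I.)$ alone is a point of the flag manifold $\Fl_{1,\ldots,k}(V_k^k)$; the image is all of this flag manifold. The fiber over a fixed flag $\ell_*^k$ consists of the remaining coordinates $(\ell_j^i)_{i\le k-1}$, subject to $(I.)$ within each column and to the horizontal relations $\ell_j^{k-1}\subset \ell_j^k+L_k^\perp$ linking column $k-1$ to the fixed column $k$, plus the analogous horizontal relations among columns $\le k-1$. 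One recognizes this fiber as an object of exactly the same shape as $\mwG$ but with one fewer column, so the induction on the number of columns closes. The base case (one column) is just a flag manifold $\Fl_{1,\ldots,k}$. To make the induction precise I would isolate, as in Proposition~\ref{P11}, the observation that fixing one column $\ell_*^{i+1}$ makes the next column $\ell_*^i$ range over a space of ``partial flags $\ell_1^i\subset\cdots\subset\ell_i^i$ with $\ell_j^i\subset \ell_j^{i+1}+L_{i+1}^\perp$'', which is again a tower of bundles with projective-space fibers (each step chooses $\ell_j^i$ inside $(\ell_j^{i+1}+L_{i+1}^\perp)\cap V_j^i$ containing the already-chosen $\ell_{j-1}^i$, a $\bP^1$-bundle because the relevant dimension gap is $1$). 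The dimension then adds up column by column, and comparing with the computation done in the proof that $\dim\mathcal G=\dim\bH_0^*$ one gets $\dim\mwG=\dim\bH_0^*$; compactness is automatic as a closed subvariety of the compact $\widehat G$.

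For item (b), I would first check that $\pi$ lands in $\mathcal G$: given $\ell_*^*\in\mwG$, the vertical relations $(I.)$ are not needed, but the horizontal relations $(II.)$ at $j=i$, namely $\ell_i^i\subset \ell_i^{i+1}+L_{i+1}^\perp$, combined with $\ell_i^{i+1}\subset\ell_{i+1}^{i+1}$ (a vertical relation $(I.)$ in column $i+1$), give $\ell_i^i\subset \ell_{i+1}^{i+1}+L_{i+1}^\perp$, which is exactly the defining relation of $\mathcal G$. Surjectivity onto $\mathcal G$ is the substantive half: given $(\ell_1,\ldots,\ell_k)\in\mathcal G$, set $\ell_i^i:=\ell_i$ and then define, going down within each column, $\ell_j^i$ for $j<i$ by intersecting with the appropriate $V_j^i$ along the pattern suggested by relation~(\ref{eq11}); I would set $\ell_j^i:=\ell_i\cap V_j^i$ for $j\le i$ when the dimensions are right (the generic situation, by the open set $U$ of (\ref{eq24})), and in general argue by a limiting/closedness argument: $\mwG$ is compact, $\pi$ is proper, $\pi(\mwG)$ is closed, and it contains the dense set $\pi^{-1}(\mathcal G\cap U)$'s image which is dense in $\mathcal G$, hence $\pi(\mwG)\supseteq\mathcal G$; together with $\pi(\mwG)\subseteq\mathcal G$ this gives equality. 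Here I must be a little careful that the naive $\ell_j^i:=\ell_i\cap V_j^i$ actually satisfies the horizontal relations $(II.)$, which is where relation (\ref{eq11}) (rewritten as $\ell_i\cap V_j^i+L_{i+1}^\perp\supseteq$ the relevant intersection) is used.

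For item (c), once (a) and (b) are in hand, it remains to show $\pi$ is birational, i.e.\ an isomorphism over a dense open subset of $\mathcal G$. I would take that open set to be $\mathcal G\cap U$ with $U$ as in (\ref{eq24}): over a point $(\ell_1,\ldots,\ell_k)$ of $\mathcal G\cap U$ all the dimension conditions $\dim(\ell_i\cap V_j^i)=j$ hold, so the fiber coordinates $\ell_j^i$ are forced to equal $\ell_i\cap V_j^i$ (uniqueness), giving an inverse map on this open set which is visibly a morphism. Since $\mathcal G$ is irreducible of dimension $\dim\bH_0^*$ (shown in the preceding proposition via the tower-of-bundles structure of $\mathcal G\cap U$) and $\mwG$ is smooth of the same dimension, $\pi$ is a proper birational morphism with smooth source, i.e.\ a resolution in the sense of Definition~\ref{defo1}.

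The main obstacle I anticipate is the bookkeeping in item (a): making the ``one-fewer-column'' induction airtight requires checking that the mixed vertical/horizontal incidence pattern restricted to a fixed last column genuinely reproduces the same shape of variety (and, crucially, that the fibers have constant biholomorphism type independent of the fixed column, so that one really gets a locally trivial tower rather than a mere set-theoretic fibration). This is the same delicate point that appears in Propositions~\ref{P11} and \ref{P12}, and I would handle it by the same device of passing to quotients $\ell_j^i/\ell_{j-1}^i$ to linearize the inclusion conditions and exhibit explicit $\bP^1$-bundle charts. A secondary subtlety is verifying surjectivity in (b) cleanly — the cheap route via properness and density of $U$ avoids having to write down an explicit global section, which I expect to be messy because the ``intersect with $V_j^i$'' recipe degenerates exactly on the locus where $\mathcal G$ is singular.
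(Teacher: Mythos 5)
Your overall strategy for item (a) — exhibiting $\mwG$ as a tower of fiber bundles by peeling off one row at a time starting from $i=k$ — is exactly the paper's, and your treatment of (c) via the open set $U$ of (\ref{eq24}) also matches. However, there are two genuine problems.

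First, in (a) you misidentify the base of the tower. You claim the projection of $\mwG$ onto the data $(\ell_j^k)_{1\le j\le k}$ has image the full flag manifold $\Fl_{1,\ldots,k}(V_k^k)$, but by definition $\ell_j^k\in\Gr_j(V_j^k)$ where $V_j^k=\sum_{p\le j}L_p+\sum_{p\ge k+1}L_p^{\perp}$ is a \emph{proper} subspace of $V_k^k$ for $j<k$. The correct identification is with the Kempf--Laksov resolution $\{(L_1,\ldots,L_k)\mid L_j\subset V_j^k,\ L_j\subset L_{j+1}\}$, whose dimension is $k(n-\beta_k)$; the full flag manifold has the strictly larger dimension $k(n-\beta_k)+k(k-1)/2$. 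With your identification the total dimension will not come out to $\dim\bH_0^*$, so this is not a cosmetic slip. The subsequent rows $i<k$ are, similarly, Kempf--Laksov resolutions of Schubert varieties inside $\Gr_i$ of the appropriate subspace $\ell_i^{i+1}+L_{i+1}^{\perp}$; treating them as free flag manifolds would again inflate the dimension.

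Second, your ``cheap route'' for surjectivity in (b) has a gap that you cannot patch within the logical order of the paper. The argument that $\pi(\mwG)\supseteq\mathcal G$ because $\pi(\mwG)$ is closed and contains $\mathcal G\cap U$ requires that $\mathcal G\cap U$ be \emph{dense} in $\mathcal G$, which amounts to knowing that $\mathcal G$ has no irreducible component disjoint from $U$. But in the paper's development, the irreducibility of $\mathcal G$ (stated in the proposition immediately preceding Theorem \ref{mth}) is itself \emph{deduced} from the surjectivity of $\pi$: the proposition's proof explicitly cites Theorem \ref{mth} to assert $\mathcal G$ is the image of a compact connected manifold. Using irreducibility of $\mathcal G$ to prove surjectivity is therefore circular. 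The paper resolves this by giving a direct, pointwise construction of a preimage over \emph{every} point of $\mathcal G$ — proceeding diagonal by diagonal, first building $(\ell_i^{i+1})_i$, then $(\ell_i^{i+2})_i$, etc. — and handling the degenerate case you are worried about (when $\dim(\ell_{i+1}\cap V_i^{i+1})=i+1$, i.e.\ $\ell_{i+1}\subset V_i^{i+1}$) by choosing $\ell_i^{i+1}$ to be any $i$-dimensional subspace of $\ell_{i+1}$ containing the projection $P_{V_i^{i+1}}(\ell_i)$. That two-case analysis is not as messy as you fear and is precisely what closes the surjectivity argument without presupposing irreducibility. The same circularity infects your justification of birationality in (c); it is safer to argue, as the paper does, that $\pi$ restricted to $\pi^{-1}(\mathcal G\cap U)$ is a biholomorphism onto $\mathcal G\cap U$ by the forced formula $\ell_j^i=\ell_i^i\cap V_j^i$, and let the birationality (in the sense of Definition \ref{defo1}) follow from that together with the surjectivity of (b).
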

\begin{proof} At (a) one proves by induction starting with the last line ($i=k$) that $\mwG$ is a tower of fiber bundles. For the last line there is no $(II.)$ condition. Moreover $V^k_{j}\subset V^{k}_{j+1}$ and therefore the $(I.)$ condition together with $\ell^k_j\in \Gr_j(V^k_j)$ imply that the image of the projection onto the last line of $\mathcal{G}$ is the Kempf-Laksov resolution of the Schubert variety in $\Gr_k(V^k_k)$ defined by the incidence relations
\[\{L\in \Gr_k(V^k_k)~|~\dim{L\cap V^k_j}=j,~j=1,\ldots, k\}
\]
This is in fact an open dense set and therefore has the same dimension as $\Gr_k(V^k_k)$, i.e. $k(n-\beta_k)$.

Take now the line $i=k-1$ and notice that $V^{k-1}_j\cap (\ell^k_{j})+L_{k}^{\perp})=\ell^k_{j})+L_{k}^{\perp}$ for all $j\leq k-1$. This says that  the constrains and incidence relations on $\ell^{k-1}_j$ once $(\ell^k_j)_{1\leq j\leq k}$ are fixed, are equivalent with
\begin{itemize}
\item[(i)] $\ell^{k-1}_j\in \Gr_j((V^{k-1}_j)')$, where $(V^{k-1}_j)':=\ell^k_{j}+L_{k}^{\perp}$ is a space of dimension \linebreak $\dim{\ell^k_{j}}+\dim {L_k^{\perp}}=j+\beta_k-\beta_{k-1}-1$ since $\ell^k_{j}\cap L_k^{\perp}=\{0\}$;
\item[(ii)] $\ell^{k-1}_j\subset \ell^{k-1}_{j+1}$.
\end{itemize}
In other words, for fixed $(\ell^k_j)_{1\leq j\leq k}$ the tuples of spaces $(\ell^{k-1}_j)_{1\leq j\leq k-1}$ satisfy the conditions of the Kempf-Laksov resolution of the Schubert variety in $\Gr_{k-1}((V^{k-1}_{k-1})')$ defined by the incidence relations $\{L~|~\dim{L\cap V^{k-1}_j)'}= j,~j=1,\ldots, k-1\}$. One thus gets a manifold of dimension $(k-1)(\beta_{k}-\beta_{k-1}-1)$.

Similar reasoning works for the next step. Hence $\mwG$ has dimension 
\[k(n-\beta_k)+\sum_{j=1}^{k-1}j(\beta_{j+1}-\beta_j-1)=\dim H_0^*.\]

At $(b)$ notice first that since $\ell_i^i\subset \ell_i^{i+1}+L_{i+1}\subset \ell_{i+1}^{i+1}+L_{i+1}^{\perp}$ we get that the image of the projection $\pi$ is contained in $\mathcal{G}$. In order to prove surjectivity one uses again induction, at step one constructing the diagonal $(\ell_i^{i+1}))_{1\leq i\leq k-1}$, then the lower diagonal $(\ell_i^{i+2}))_{1\leq i\leq k-2}$, etc.

Let us start with $\ell_*\in\mathcal{G}$ such that $\ell_i\subset \ell_{i+1}+L_{i+1}^{\perp}$. We will show that from  the pair $(\ell_i,\ell_{i+1})$ we can choose an $\ell_i^{i+1}\subset \ell_{i+1}$ of dimension $i$ such that
\[ \ell_i\subset \ell_i^{i+1}+L_{i+1}.
\]
First we have that 
\begin{equation}\label{eq20} \ell_i\subset \left(\ell_{i+1}+L_{i+1}^{\perp}\right)\cap V_i^i=\left(\ell_{i+1}\cap\left(\sum_{j=1}^iL_j+\sum_{j=i+2}^{k+1}L_j^{\perp}\right)\right)+L_{i+1}^{\perp}=\ell_{i+1}\cap V_i^{i+1}+L_{i+1}^{\perp}
\end{equation}
On the other hand $\ell_{i+1}\in\Gr_{i+1}(V_i^{i+1}+L_{i+1})=\Gr_{i+1}(V^{i+1}_{i+1})$. Hence
\begin{equation}\label{eq21} \dim \ell_{i+1}\cap V_i^{i+1}\in \{i,i+1\}
\end{equation}
If the dimension of the intersection is $i$ then define 
\begin{equation} \label{eq34} \ell_i^{i+1}:=\ell_{i+1}\cap V_i^{i+1}\end{equation} and by  (\ref{eq20}) we have that
\[ \ell_i\subset\ell_i^{i+1}+L_{i+1}^{\perp}  ~\mbox{ and }~ \ell_i^{i+1}\subset \ell_{i+1} ~\mbox{ is obvious.}
\]
If the dimension in (\ref{eq21}) is $i+1$ then $\ell_{i+1}\subset V_i^{i+1}$. Consider the projection $\ell_i':=P_{V_i^{i+1}}(\ell_i)$ with respect to the decomposition
\[ V_i^i= V_i^{i+1}\oplus L_{i+1}^{\perp}
\]
We have $\ell_i\subset \ell_i'+L_{i+1}^{\perp}$ and also
\begin{equation} \label{eq22}\ell_i\subset(\ell_i'+L_{i+1}^{\perp})\cap (\ell_{i+1}+L_{i+1}^{\perp})=(\ell_i'\cap \ell_{i+1})+L_{i+1}^{\perp}.
\end{equation}
But $\ell_i'\cap \ell_{i+1}\subset V_i^{i+1}$ and therefore $\ell_i'\cap \ell_{i+1}\supset P_{V_i^{i+1}}=\ell_i'$. Since $\dim{\ell_i'}<\dim{\ell_{i+1}}$ we get $\ell_i'\subset \ell_{i+1}$. Choose now $\ell_i^{i+1}\subset \ell_{i+1}$ to be any subspace of dimension $i$ which contains $\ell_i$. Then since $\ell_{i+1}\subset V_i^{i+1}$ one has $\ell_i^{i+1}\in\Gr_{i}(V_i^{i+1})$ and because of (\ref{eq22}) the condition $\ell_i\subset \ell_i^{i+1} +L_{i+1}^{\perp}$ is also fulfilled.

Since $\ell_i^{i+1}\subset\ell_{i+1}\subset \ell_{i+1}^{i+2}+L_{i+2}^{\perp}$ we can use the $(k-1)$-tuple $(\ell_i^{i+1})_{1\leq i\leq k-1}$ to determine $\ell_i^{i+2}\in\Gr_i(V_i^{i+2})$ such that $\ell_i^{i+1}\subset \ell_i^{i+2}+L_{i+2}^{\perp}$ and $\ell_i^{i+2}\subset \ell_{i+1}^{i+2}$. And so on.

Finally, for (c) we need only check that over the set $\mathcal{G}\cap U$ the projection $\pi$ is one-to-one. We use here the open set $U$ which appears in (\ref{eq24}). Notice that given $(\ell_i^i)_{1\leq i\leq k}\in\mathcal{G}\cap U$ the first diagonal below the main diagonal is uniquely determined by (\ref{eq34}). Indeed we are looking for an $i$-dimensional space $\ell_i^{i+1}$ such that $\ell_i^{i+1}\subset \ell_{i+1}^{i+1}$, $\ell_i^{i+1}\subset V_i^{i+1}$ and $\dim{\ell_{i+1}^{i+1}\cap V_i^{i+1}}=i$. 

We go to the second diagonal and use that $\ell_i^{i+1}\subset\ell_{i+1}^{i+1}\subset\ell_{i+1}^{i+2}+L_{i+2}^{\perp}$. Hence
\[\ell_{i}^{i+1}\subset(\ell_{i+1}^{i+2}+L_{i+2}^{\perp})\cap V_i^{i+1}=(\ell_{i+1}^{i+2}\cap V_i^{i+1})+L_{i+2}^{\perp}=
\]
\[=(\ell_{i+1}^{i+2}\cap V_{i+1}^{i+2}\cap V_i^{i+1})+L_{i+2}^{\perp}=(\ell_{i+1}^{i+2}\cap V_i^{i+2})+L_{i+2}^{\perp}=
\]
\[=[(\ell_{i+2}^{i+2}\cap V_{i+1}^{i+2})\cap V_i^{i+2})]+L_{i+2}^{\perp}=(\ell_{i+2}^{i+2}\cap V_i^{i+2})+L_{i+2}^{\perp}.
\]
Then the unique choice for $\ell_i^{i+2}$ which satisfies $\ell_i^{i+2}\subset \ell_{i+2}^{i+2}$, $\ell_i^{i+2}\subset V_i^{i+2}$ and $\ell_{i}^{i+1}\subset \ell_{i}^{i+2}+L_{i+2}^{\perp}$ is $\ell_i^{i+2}=\ell_{i+2}^{i+2}\cap V_i^{i+2}$ which has dimension $i$ by the open condition $\ell_*\in U$. One does the same for the other diagonals. Hence
\[ \ell_j^i=\ell_i^i\cap V^i_j,\qquad \forall j\leq i.
\]
\end{proof}
\section{The embedded resolution}

Recall the total family of Kempf-Laksov resolutions $\mathcal{F}(E)\ra \Fl_{\beta_1,\ldots,\beta_k}(E)$.

The composition of $\widetilde{\Psi}:\mathcal{G}\ra \Fl_{\beta_1,\ldots,\beta_k}(E)$ with the resolution $\pi:\mwG\ra \mathcal{G}$  is denoted $\widehat{\Psi}$. The fiber product of the maps
\[ \xymatrix{ & \mathcal{F}(E)\ar[d] \\
  \mwG\ar[r]^{\widehat{\Psi}\qquad} & \Fl_{\beta_1,\ldots,\beta_k}(E)}
\]
is a compact complex manifold of dimension $k(n-k)$ denoted :
\[\widehat{\Psi}^*\mathcal{F}(E):=\{(\ell_*^*,V_*)~|~V_i\subset \ell_i^i+\sum_{j=1}^iL_j^{\perp}, ~\forall~i=1,\ldots,k\}
\]
From the proof of item (c) of Theorem \ref{mth} we infer that there exists a special point  $o\in\mwG$ which  corresponds to the unique bioriented  flag $\ell_*^*$ such that 
\[\pi(\ell_*^*)=\left(\ldots,\sum_{j=1}^iL_j,\ldots\right)\in\mathcal{G}\cap U
\] 
which corresponds to $B_i\equiv 0$ or, equivalently $A_i\equiv 0$ for all $i$.
\begin{theorem}\label{mth2} The map $\widehat{\Psi}^*\mathcal{F}(E)\ra \Gr_k(E)$ 
\[ (\ell_*^*,V_*)\ra V_k
\]
is an embedded resolution of $\overline{V_{\beta}}$ which restricts on $[\widehat{\Psi}(o)]^*\mathcal{F}(E)$ to the Kempf-Laskov resolution of $\overline{V_{\beta}}$.
\end{theorem}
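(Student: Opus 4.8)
The plan is to verify the three conditions of Definition~\ref{defo1} for the pair $(\tilde Y,\tilde X):=\bigl(\widehat{\Psi}^*\mathcal{F}(E),\,[\widehat{\Psi}(o)]^*\mathcal{F}(E)\bigr)$ together with $Y=\Gr_k(E)$, $X=\overline{V_{\beta}}$ and the map $\rho:\tilde Y\ra\Gr_k(E)$, $(\ell_*^*,V_*)\mapsto V_k$, and to extract the final assertion in passing. Here $\tilde Y$ is already known to be a compact complex manifold of dimension $k(n-k)$, so $\rho$, a composition of algebraic maps out of a compact space, is proper and holomorphic; and $\tilde X$ is a fiber of the bundle $\widehat{\Psi}^*\mathcal{F}(E)\ra\mwG$, hence a smooth submanifold.

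\emph{The restriction to $\tilde X$ and condition (b).} I would first compute $\widehat{\Psi}(o)$. Since $\pi(o)=\bigl(\ldots,\sum_{j=1}^iL_j,\ldots\bigr)$, formula (\ref{eq45}) gives that the $i$-th space of the flag $\widehat{\Psi}(o)$ is $\sum_{j=1}^iL_j+\sum_{j=1}^iL_j^{\perp}=\sum_{j=1}^iF_{\beta_j}^{\beta_{j-1}}=F_{\beta_i}$, so $\widehat{\Psi}(o)=F_{\beta}$. Consequently $\tilde X=\{(o,V_*)~|~V_i\subset F_{\beta_i}\}$ and the restriction of $\rho$ to it is literally the map (\ref{KL1}), i.e. the classical Kempf--Laksov resolution of $\overline{V_{\beta}}$. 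This proves the last sentence of the theorem and, since the Kempf--Laksov map is birational onto $\overline{V_{\beta}}$, also condition (b).

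\emph{Condition (a).} I would test birationality of $\rho$ over the big cell $W=\Hom\bigl(\sum_iL_i,\sum_{i=1}^{k+1}L_i^{\perp}\bigr)$ of $\Gr_k(E)$, which is open and dense since $E=\bigl(\sum_iL_i\bigr)\oplus\bigl(\sum_{i=1}^{k+1}L_i^{\perp}\bigr)$. The key observation is that $\bH^*_0$ sits in $\mathcal{G}$ as an open dense subset contained in $\mathcal{G}\cap U$ (notation of (\ref{eq24})): writing $\ell_i=\Gamma_{B_i}$ one has $\dim(\Gamma_{B_i}\cap V^i_j)=j$ for all $j\le i$, because $\Gamma_{B_i}\cap V^i_j$ is the graph of $B_i$ restricted to $\sum_{p\le j}L_p$. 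By Theorem~\ref{mth}(c) the resolution $\pi$ restricts to a biholomorphism $\pi^{-1}(\bH^*_0)\xrightarrow{\sim}\bH^*_0$, under which $\widehat{\Psi}$ becomes $\Psi$; hence the restriction of $\tilde Y$ over $\pi^{-1}(\bH^*_0)$ is identified with $\Psi^*\mathcal{F}(E)$ and $\rho$ with the map (\ref{eq7}). Theorem~\ref{thm41} says that the latter contains $W$ in its image and is injective over $W$, with the manifestly holomorphic inverse $\Gamma_T\mapsto(A_*,V_*)$, $A_i=P(T|_{L_i})$, $V_i=\Gamma_T\cap F'_{\beta_i}$. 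Therefore $\rho$ restricts to a biholomorphism from the open dense set $\rho^{-1}(W)\cap\pi^{-1}(\bH^*_0)$ onto $W$, which gives (a).

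\emph{Condition (c).} I would take $X':=\overline{V_{\beta}}\setminus(V_{\beta}\cap W)$, a proper closed analytic subvariety of $\overline{V_{\beta}}$ (it is the union of the boundary $\overline{V_{\beta}}\setminus V_{\beta}$ with $\overline{V_{\beta}}\cap(\Gr_k(E)\setminus W)$, and it is proper because $\sum_iL_i\in V_{\beta}\cap W$). It then suffices to prove $\rho^{-1}(V_{\beta}\cap W)\subset\tilde X$, i.e. that every $(\ell_*^*,V_*)$ with $V_k=\Gamma_T\in V_{\beta}\cap W$ satisfies $\ell_*^*=o$. For this one argues as in the proof of Theorem~\ref{thm41}: for $\Gamma_T\in W$ the inequality $\dim\Gamma_T\cap\bigl(\ell_i^i+\sum_{j\le i}L_j^{\perp}\bigr)\le i$ holds automatically, hence equality holds (because $V_i$ lies inside), and a component-by-component induction on $i$ then forces the diagonal $(\ell_i^i)_i$ to lie in $\bH^*_0$ and to equal $(\Gamma_{B_i})_i$ with $B_i|_{L_j}=P_i(T|_{L_j})$; so $\ell_*^*\in\pi^{-1}(\bH^*_0)$ and is the point determined by $T$ as in the previous step. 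When in addition $\Gamma_T\in V_{\beta}$, the vanishing of the off-diagonal components (the relations $C_i\equiv 0$ from the proof that $V_{\beta}$ and $V_{\beta}^*$ meet transversally) gives $P_i(T|_{L_j})=0$ for all $i,j$, so each $B_i=0$ and $\ell_*^*=o$.

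\emph{Expected main obstacle.} The one step that is not formal is the first half of the argument for (c): showing that $\rho^{-1}(W)$ already lies over the open locus $\bH^*_0\subset\mathcal{G}$ on which $\pi:\mwG\ra\mathcal{G}$ is an isomorphism, i.e. that the extra blow-up coordinates of $\mwG$ contribute no new points above the big cell $W$. This amounts to redoing the component bookkeeping of Theorem~\ref{thm41} directly inside $\mwG$ rather than inside $\bH^*_0$; all the other ingredients are either classical (Kempf--Laksov) or already established in the earlier sections.
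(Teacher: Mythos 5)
Your proposal is correct and takes essentially the same approach as the paper's: the central step, which you rightly flag as the main obstacle, is showing that over the big cell $W$ every preimage $(\ell_*^*,V_*)$ has its diagonal $(\ell_i^i)_i$ forced into the open locus $\bH_0^*\subset \mathcal{G}\cap U$ over which $\pi:\mwG\ra\mathcal{G}$ is a biholomorphism, which the paper carries out via the non-degeneracy relation (\ref{eq98}). The only minor divergence is in finishing condition~(c): you compute directly that $\Gamma_T\in V_{\beta}\cap W$ forces the $\bH_0^*$-parameters $B_i$ to vanish, whereas the paper infers the same from the surjectivity of the Kempf--Laksov map onto $\overline{V_{\beta}}$ combined with the injectivity over $W$ just established --- both routes are valid and rest on the same underlying facts.
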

\begin{proof} It is  easy to see that there exists an embedding $\iota:\bH^*_0\hookrightarrow \mwG$ onto an open dense set that lifts $\Psi$, i.e.
\[ \widehat{\Psi}\circ\iota=\Psi.
\]
This is because given $(A_1,\ldots,A_k)\in\bH^*_0$ then the $k$-tuple $(\Gamma_{B_1},\ldots,\Gamma_{B_k})\in\mathcal{G}$ obtained via (\ref{eq56}) actually belongs to the open set $U$ defined in (\ref{eq24}) i.e.
\[\dim{\Gamma_{B_i}\cap V^i_j}=i-j
\]
since $\Gamma_{B_i}\cap V^i_j=\Gamma_{B_i\bigr|_{\sum_{p=1}^j}L_p}$.  Now $U$ is a set over which the projection $\pi:\mwG\ra\mathcal{G}$ is a biholomorphism.

We also conclude from here that the map $\widehat{\Psi}^*\mathcal{F}(E)\ra \Gr_k(E)$ is an extension of the one from Theorem \ref{thm41} and since we are dealing with analytic maps between projective manifolds of the same dimension we get that it is surjective.

Finally, we prove item (c) of Definition \ref{defo1}. Consider the open chart in $\Gr_k(E)$:
\[ W=\Hom\left(\sum_{i=1}^kL_i,\sum_{i=1}^{k+1}L_i^{\perp}\right)
\]
Just as in the proof of Theorem \ref{thm41}, $T\in W$ implies that 
\[\dim{\Gamma_T\cap F_{\beta_i}}'=\dim{\Gamma_T\cap \left(\ell_i^i+\sum_{j=1}^iL_j^{\perp}\right)}\leq i,\qquad\forall~i\] for any flag $F_*'=\widehat{\Psi}(\ell_*^*)\in \Imag\widehat{\Psi}$.
But $\Gamma_T=V_k$ for some $(\ell_*^*,V_*)\in \widehat\Psi^*\mathcal{F}(E)$. Hence $\Gamma_T=V_k\supset V_i$ and by the incidence relations $V_i\subset \ell_i^i+\sum_{j=1}^{i}L_j^{\perp}$ it follows that
\[ \dim{\Gamma_T\cap \left(\ell_i^i+\sum_{j=1}^iL_j^{\perp}\right)} \geq i
\]
Hence equality holds and this implies on one hand that  $V_i=\Gamma_T\cap \left(\ell_i^i+\sum_{j=1}^iL_j^{\perp}\right)$ and that $\ell_*^*$ is uniquely determined. We prove the second claim. Notice that
\begin{equation}\label{eq98}\ell_i^i\cap \left(\sum_{j=i+1}^{k+1}L_{j}^{\perp}\right)=\{0\}.\end{equation} 
If (\ref{eq98}) were not to hold then we get a contradiction by considering $L$,  a complement of $\sum_{j=1}^{k+1}L_{j}^{\perp}$ (notice the range of the sum) inside $\ell_i^i+\sum_{j=1}^{k+1}L_{j}^{\perp}$ which will obviously have dimension smaller than $i=\dim{\ell_i^i}$ and the projection of $\Gamma_T\cap (\ell_i^i+\sum_{j=1}^iL_j^{\perp})$ inside $\ell_i^i+\sum_{j=1}^{k+1}L_{j}^{\perp}$ onto $L$. This projection is injective since $\Gamma_T\cap \sum_{j=1}^{k+1}L_{j}^{\perp}=\{0\}$ and we get $\dim{\Gamma_T\cap (\ell_i^i+\sum_{j=1}^iL_j^{\perp})}<i$.

Now (\ref{eq98}) implies (recall that $\ell_i^i\subset V_i^i$) that $\ell_i^i=\Gamma_{B_i}$ where $B_i\in \Hom\left(\sum_{j=1}^iL_j,\sum_{j=i+1}^{k+1}L_j^{\perp}\right)$. But then as we remarked earlier this means that the tuple $(\ell_i^i)_{1\leq i\leq k}$ belongs to $\mathcal{G}\cap U$ over which the projection $\pi:\mwG\ra \mathcal{G}$ is a biholomorphism.  

We conclude that the chart $W$ lies in the set over which the map $\widehat{\Psi}^*\mathcal{F}(E)\ra \Gr_k(E)$ is one-to-one. The intersection  $W\cap\overline{V_{\beta}}=V_{\beta}^{\circ}$ is an open dense set of $\overline{V_{\beta}}$, the Schubert cell of
\[F_{\beta_1-1}\subset F_{\beta_1}\subset\ldots \subset F_{\beta_2-1}\subset F_{\beta_2}\subset \ldots\subset F_{\beta_k-1}\subset F_{\beta_k}
\]
where
\[ F_{\beta_i-1}=\sum_{j=1}^{i-1}L_j+\sum_{j=1}^i L_j^{\perp}
\]
\[V_{\beta}^{\circ}=\{L\in\Gr_k(E)~|~\dim{L\cap F_{\beta_i}}=i,~~\dim{L\cap F_{\beta_i-1}}=i-1\}.
\]
We conclude that the only points that lie over $V_{\beta}^{\circ}$ in the resolution belong to the set $[\widehat{\Psi}(o)]^*\mathcal{F}(E)$.
\end{proof}
Rather than using the Kempf-Laksov resolution as the model for the desingularization  of the Schubert variety in Theorem \ref{mth2}, one could use other resolutions that are completely determined by the nodes $F_{\beta_1},\ldots, F_{\beta_k}$ like the small resolutions of Zelevinski \cite{Ze}.

\end{document}